\declaretheorem[name=Theorem,style=plain,parent=section]{theorem}
\declaretheorem[name=Lemma,style=plain,numberlike=theorem]{lemma}
\declaretheorem[name=Corollary,style=plain,numberlike=theorem]{corollary}
\declaretheorem[name=Conjecture,style=plain,numberlike=theorem]{conjecture}
\declaretheorem[name=Definition,style=definition,numberlike=theorem]{definition}
\declaretheorem[name=Remark,style=remark,numberlike=theorem]{remark}
\declaretheorem[name=Example,style=remark,numberlike=theorem]{example}
\title{Barnette Graphs with Faces up to Size 8 are Hamiltonian}
\date{\today}
\author{Tobias Schnieders}
\crefname{conjecture}{Conjecture}{Conjectures}
\subjclass[2020]{Primary 05C45; Secondary 05C10, 52B05, 52B10}
\keywords{Barnette's conjecture, Hamiltonian cycle, bipartite, simple polyhedron}
\begin{document}
\begin{abstract}
Barnette's conjecture states that every cubic, bipartite, planar and $3$-connected graph is Hamiltonian. Goodey verified Barnette's conjecture for all graphs with faces of size up to $6$.

We substantially strengthen Goodey's result by proving Hamiltonicity for cubic, bipartite, planar and ($2$-)connected graphs with faces of size up to $8$. Parts of the proof are computational, including a distinction of $339.068.624$ cases.
\end{abstract}
\maketitle
\section{Introduction}
\emph{Hamiltonian cycles} in graphs are cycles that visit every vertex exactly once. They are of intrinsic combinatorial interest and have many applications, for instance, concerning codes in computer science.

A famous problem in graph theory and complexity theory and a special case of the Travelling Salesman Problem is the so-called \emph{Hamiltonian Path Problem}. It asks, whether a given finite, simple graph is \emph{Hamiltonian}, that is the graph has a Hamiltonian cycle. Due to the following theorem, the Hamiltonian Path Problem remains a hard problem for finite simple graphs, which are cubic, bipartite, planar and connected.
\begin{theorem}[Akiyama, Nishizeki, Saito $1980$ \cite{AkiNisSai80}]\label{thm-2-conc-NP-complete}
The Hamiltonian Path Problem is NP-complete for finite, simple graphs that are cubic, bipartite, planar and $2$-connected.
\end{theorem}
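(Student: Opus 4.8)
The plan is to prove the two halves of NP-completeness separately, beginning with the easy direction that the problem lies in NP. A certificate is simply a sequence of vertices; per the paper's definition of the \emph{Hamiltonian Path Problem}, this is a cyclic ordering witnessing a Hamiltonian cycle (and, under the literal reading, a linear ordering witnessing a Hamiltonian path). Checking a certificate --- that consecutive vertices, and the closing pair, are adjacent, and that every vertex occurs exactly once --- takes polynomial time, so membership in NP is immediate and does not interact with the structural restrictions of being cubic, bipartite, planar and $2$-connected.

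The substance is NP-hardness, which I would establish by a polynomial-time reduction from an already-known NP-complete Hamiltonicity problem; the natural choice is the Hamiltonian cycle problem on planar cubic $3$-connected graphs (Garey, Johnson and Tarjan), with a reduction from $3$-SAT as a more laborious alternative. Given a source graph $G$, the goal is to construct in polynomial time a graph $G'$ that is cubic, bipartite, planar and $2$-connected and that is Hamiltonian if and only if $G$ is. I would fix a planar embedding of $G$ and perform a \emph{local} transformation, replacing each vertex (and, where needed, each edge) by a fixed finite \emph{bipartite gadget} of bounded size with attachment ports, drawn inside a small disc around the corresponding feature so that planarity is retained. The gadgets are wired so that, using that every vertex of $G$ has degree $3$, every vertex of $G'$ has degree exactly $3$; they are chosen so that every face of the embedding becomes even, whence (by the fact that a $2$-connected planar graph is bipartite precisely when every face boundary is even) $G'$ is bipartite; and the local connectivity of the gadgets together with the $3$-connectivity of $G$ should deliver the $2$-connectivity of $G'$.

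The crux, and the step I expect to be the main obstacle, is showing that the gadgets \emph{force} the intended semantics, so that the Hamiltonian cycles of $G'$ correspond --- after contracting each gadget back to its feature --- exactly to the Hamiltonian cycles of $G$. I would classify, by a finite case analysis, all admissible ways a Hamiltonian cycle can enter and leave a gadget through its ports, and design the gadget so that each admissible port-pattern encodes precisely the information ``edge used'' or ``edge unused'' in $G$, while forcing every interior vertex of the gadget to be covered. The forward direction --- lifting a Hamiltonian cycle of $G$ to one of $G'$ --- is straightforward. The converse --- that every Hamiltonian cycle of $G'$ projects to one of $G$, with no parasitic traversal that sweeps the gadget interiors yet violates the edge-selection semantics --- is where the case analysis is delicate, and it must be reconciled with the parity constraints imposed by bipartiteness. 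A secondary point requiring care is confirming that all four structural properties genuinely survive every local substitution, particularly bipartiteness and $2$-connectivity once all gadgets are inserted simultaneously.

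Finally, I would address the path/cycle terminology head-on, since it is the point on which this statement is most easily misread. Under the paper's own definition the \emph{Hamiltonian Path Problem} is the decision problem of whether the input is Hamiltonian, i.e. possesses a Hamiltonian \emph{cycle}, and the reduction above is phrased for that version. The identical gadget methodology --- reduction from a planar cubic source problem followed by degree-preserving, parity-correcting bipartite substitutions --- applies, mutatis mutandis, to the literal Hamiltonian-\emph{path} formulation as well, the only substantive change being the choice of source problem and the treatment of two distinguished ports playing the role of path endpoints. Hence the NP-completeness assertion holds exactly as worded under either reading of the statement.
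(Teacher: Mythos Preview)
The paper does not prove this theorem at all: it is stated with attribution to Akiyama, Nishizeki and Saito \cite{AkiNisSai80} and no proof environment follows. It functions purely as a cited external result motivating the difficulty of the Hamiltonian problem on this graph class, so there is no ``paper's own proof'' to compare against.

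As for your proposal itself, it is a reasonable high-level \emph{plan} --- membership in NP, then hardness by gadget reduction from Garey--Johnson--Tarjan --- and this is indeed the shape of the argument in the original Akiyama--Nishizeki--Saito paper. But what you have written is not a proof: the entire content lies in exhibiting an explicit bipartite gadget and carrying out the finite case analysis you allude to, and you have supplied neither. Phrases like ``the gadgets are chosen so that every face becomes even'' and ``the local connectivity of the gadgets \ldots\ should deliver the $2$-connectivity'' are promissory notes, not arguments. In particular, the delicate point you correctly flag --- that bipartiteness imposes parity constraints which must be reconciled with the port-pattern semantics --- is exactly where a naive gadget fails, and you have not shown how to resolve it. If you intend this as an actual proof rather than a literature pointer, you need to write down the gadget and do the case check; if you intend it as a citation, a single sentence referring to \cite{AkiNisSai80} suffices, which is what the paper does.
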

\begin{remark}\label{rem-con->2-conn}
For graphs like those considered in \cref{thm-2-conc-NP-complete}, we will not distinguish between connectedness and $2$-connectedness, since every finite, simple, cubic, bipartite, connected graph is $2$-connected, see \cref{lem:conn->2-conn}.
\end{remark}
For a comparable set of graphs, Tait conjectured Hamiltonicity in 1884. Later, Tutte disproved Tait's conjecture and proposed another similar conjecture which also turned out to be false. In 1969, Barnette conjectured a combination of these two disproven conjectures. It is still open today and, over time, became a famous question in graph theory.
\begin{conjecture}[Barnette's Conjecture, Problem $5$ in \cite{conference-conjecture}]\label{conj:Barnette}
Every finite, simple, cubic, bipartite, planar and $3$-connected graph is Hamiltonian.
\end{conjecture}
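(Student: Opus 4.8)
The plan is to attack \cref{conj:Barnette} by the classical minimal-counterexample method combined with reducibility and discharging, in the spirit of the Four Colour Theorem. Suppose the conjecture fails and let $G$ be a counterexample with the fewest vertices; by \cref{rem-con->2-conn} and \cref{lem:conn->2-conn} we may freely invoke $2$-connectedness, and $G$ is by hypothesis cubic, bipartite, planar and $3$-connected yet non-Hamiltonian. After fixing a plane embedding I would pass to the standard face-colouring reformulation: a cubic plane graph is Hamiltonian if and only if its faces admit a $2$-colouring whose separating edges form a single spanning cycle, equivalently each colour class induces a tree in the dual. Since the dual of a cubic bipartite planar $3$-connected graph is a $3$-connected Eulerian planar triangulation, the conjecture becomes the dual statement that every such triangulation has a partition of its vertices into two parts each inducing a tree. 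I would carry out the surgery in whichever of the two formulations keeps the local modifications cleanest.

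The first main step is to assemble a library of \emph{reducible configurations}. A configuration is a bounded plane subgraph together with the sizes of its incident faces, and it is reducible if, whenever it occurs in a Barnette graph, a local contraction or rerouting yields a strictly smaller graph that is again cubic, bipartite, planar and $3$-connected, and from any Hamiltonian cycle of the reduced graph one can reconstruct a Hamiltonian cycle of the original. For each candidate the reduction is certified by a finite computation: one enumerates all ways a Hamiltonian cycle can trace the boundary of the configuration (a bounded set of passage patterns) and checks that every admissible reduced pattern lifts back. This is exactly where machine assistance becomes indispensable, mirroring the $339\,068\,624$-case analysis announced in the abstract. By minimality of $G$, the reduced graph is Hamiltonian, so $G$ would be too; hence a minimal counterexample avoids every reducible configuration.

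The second main step is to prove that the chosen configurations form an \emph{unavoidable set}, via a discharging argument grounded in Euler's formula. For a cubic bipartite plane graph all faces have even size, and combining $V-E+F=2$ with $3V=2E$ gives $\sum_k (6-k)\,f_k = 12$, where $f_k$ counts faces of size $k$. Assigning each face the initial charge $6-k$ concentrates all positive charge on the quadrilaterals (bipartiteness forbids triangles and pentagons, and hexagons are neutral). I would then design local redistribution rules and argue that, in the absence of every reducible configuration, each face ends with non-positive charge, contradicting the positive total $12$.

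The hardest step, and the reason \cref{conj:Barnette} remains open, is closing the unavoidability argument when faces may be arbitrarily large. Every reducible configuration has bounded size, whereas a quadrilateral can be surrounded by ever larger faces, so its positive charge must be discharged across a boundary whose structure no finite configuration controls. Concretely, I expect the obstruction to be a family of hypothetical neighbourhoods of a quadrilateral, each consistent with all small reducible configurations yet each passing charge to faces that are themselves too large to be reducible. Overcoming this demands either a genuinely new reducible configuration guaranteed to appear near \emph{any} face of size at least some fixed threshold, independent of that face's actual size, or a global structural theorem on $3$-connected Eulerian triangulations forcing the desired two-tree partition. Bounding the face sizes, as the present paper does, is precisely the device that sidesteps this obstacle; removing the bound is the crux that no current method resolves.
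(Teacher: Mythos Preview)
The statement you are addressing is Barnette's \emph{Conjecture}, not a theorem, and the paper does not prove it; indeed the entire point of the paper is to establish only the bounded-face special case (\cref{thm-maintheorem}), while \cref{conj:Barnette} is explicitly presented as open. There is therefore no ``paper's own proof'' to compare your attempt against.

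Your write-up is not a proof but an honest research outline, and you yourself say so in the final paragraph: the discharging argument cannot be closed once faces are unbounded, because positive charge on a quadrilateral may have to be pushed into arbitrarily large neighbours that no finite reducible configuration controls. That is exactly the obstruction, and acknowledging it means you have not proved the conjecture --- you have explained why the method of the paper (and of Goodey, and of the Four Colour Theorem) does not obviously extend. This matches the paper's own discussion in \cref{sec:prospect}. So the genuine gap is simply that the proposal is a strategy sketch with a self-identified missing step, not a proof; nothing more specific needs to be flagged, because the conjecture remains open and no argument of the type you describe is currently known to succeed.
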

\begin{definition}[Barnette Graph]
A finite, simple graph is called a \emph{Barnette graph} if it is cubic, bipartite, planar and $3$-connected.
\end{definition}
According to Steinitz's Theorem (see Theorem $2.1$ in \cite{Gru07}), Barnette's conjecture can also be formulated as follows: ``\emph{The polyhedral graph of every convex, simple polyhedron with even sided faces is Hamiltonian.}'' Likewise, equivalent forms in terms of simplicial polyhedrons also exist.

Although the problem still remains unsolved, some partial results have been proven. Namely, the following result became prominent, as it has been generalised in several different ways. It makes use of the fact that the dual graph of a $3$-connected, planar graph is unique. The main result of this paper, \cref{thm-mainthminintro}, is also a generalisation of this.
\begin{theorem}[Goodey $1975$ \cite{Goo75}]\label{thm-goodey}
Every Barnette graph with faces of size up to $6$ is Hamiltonian.
\end{theorem}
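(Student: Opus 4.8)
The plan is to pass to the dual. Since $G$ is cubic, bipartite, planar and $3$-connected, its dual $G^*$ is a simple, $3$-connected plane triangulation all of whose vertices have even degree (bipartiteness of $G$ is equivalent to every face of $G$ having even length, i.e. to every vertex of $G^*$ being even) and maximum degree at most $6$; so every vertex of $G^*$ has degree $4$ or $6$. A one-line Euler count, $\sum_v (6 - \deg_{G^*} v) = 12$, then shows that $G^*$ has exactly six vertices of degree $4$ — equivalently, $G$ has exactly six quadrilateral faces and all others hexagonal (the cube being the unique one with no hexagons, and the hexagonal prism and the truncated octahedron the next smallest). Moreover, a cycle of $G$ corresponds under planar duality to a bond of $G^*$, which splits $V(G^*)$ into two connected parts, and an edge count shows the bond has size $|V(G)|$ — i.e. the cycle is Hamiltonian — exactly when both parts induce acyclic subgraphs, hence trees. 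So the task becomes: two-colour the vertices of a simple $3$-connected Eulerian plane triangulation $T$ with exactly six degree-$4$ vertices so that each colour class induces a tree in $T$.

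The second step is structural. Away from its six degree-$4$ ``defects'', $T$ is made of degree-$6$ vertices, so locally it is a piece of the regular triangular lattice; in discrete terms each defect is a cone point carrying two units of curvature, the six of them accounting for the $12$ units demanded by the Euler count above, and one is led to expect that $T$ decomposes into a bounded number of ``singular regions'', one around each defect or small cluster of defects, glued along straight ``geodesic'' cuts to ``flat'' triangular-lattice pieces, some of which may be cylindrical tubes of hexagons of unbounded length. First I would make this precise: classify the singular regions and the interface patterns by which they meet the flat part — a finite list, since each singular region occupies only a bounded neighbourhood of its defect(s) — and it is exactly here that I would invoke $3$-connectivity of $G$, to exclude the degenerate gluings (tubes of width one, and so on).

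The third step is to assemble the two-colouring region by region: a fixed ``tile'' for each type of singular region, a periodic stripe pattern on the tubular pieces (so that tubes of arbitrary length cost nothing), and a prescribed matching pattern at every interface, after which one verifies that the two colour classes stay connected across all interfaces and that no monochromatic cycle is ever closed up. In the primal picture this is the same as routing one closed curve through every vertex of $G$, running ``straight'' through the hexagons and making a fixed repertoire of turns near the six quadrilaterals, the turns fitting together because $3$-connectivity leaves enough room.

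I expect the real difficulty to lie in the interaction of the last two steps: the graphs in question form an infinite family (the tube-like ones can be made as large as one likes), so there is no brute-force enumeration to fall back on, and one genuinely needs both a clean finite decomposition into ``region types'' and a translation-invariant colouring of the tubes. The delicate combinatorial point is to keep the interface patterns mutually compatible — above all when two or three defects crowd together near the mouth of a tube — without ever forcing a monochromatic cycle, which would destroy the tree condition on one side and, with it, the Hamiltonian cycle.
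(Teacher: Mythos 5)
The paper does not prove this statement at all --- it is quoted as Goodey's theorem and used as a black box --- so there is no internal proof to compare you against; your proposal has to stand on its own. Its first step does: the dual of a Barnette graph with faces of size at most $6$ is a simple $3$-connected Eulerian plane triangulation with all degrees $4$ or $6$, the count $\sum_v\bigl(6-\deg v\bigr)=12$ forces exactly six degree-$4$ vertices, and a Hamiltonian cycle of $G$ corresponds exactly to a partition of the dual vertex set into two classes each inducing a tree. All of that is correct and is the standard reformulation.

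The genuine gap is in your second step, and it is not a technicality you can defer. You assert that $T$ decomposes into a \emph{finite} list of bounded ``singular regions'' around the six defects, glued along geodesic cuts to flat pieces whose only unbounded shape is a one-parameter tube of hexagons. That is false as stated: the six defects each carry curvature $2\pi/3$ and can sit pairwise far apart with the flat regions between them growing in \emph{two} independent directions. Concretely, the chamfered cube and, more generally, the Goldberg--Coxeter expansions of the cube are cubic, bipartite, planar, $3$-connected graphs with exactly six quadrilateral faces whose hexagonal regions are two-dimensional lattice patches of unbounded width and height, not tubes; their duals are triangulations in your class that no bounded repertoire of region types plus periodic stripe patterns on tubes can cover. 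So the colouring scheme of your third step would have to be doubly periodic on these patches and compatible along interfaces whose shapes themselves range over a multi-parameter family (the mutual positions of the six defects), and none of this is set up --- it is exactly the part you flag as ``the real difficulty'' and leave as an expectation. Until that classification and the gluing compatibility are actually carried out (and the use of $3$-connectivity to exclude degenerate gluings is made precise rather than invoked), what you have is a plausible programme, not a proof of Goodey's theorem.
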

Other partial results, including generalisations of this, are given in the following.
\begin{theorem}[Hertel $2005$ \cite{Her05}]
Barnette's conjecture holds if and only if every Barnette graph is $\overline{x}$-$y$-$\overline{z}$-Hamiltonian, that is for every path of length 3, there is a Hamiltonian cycle passing through the middle edge of the path and avoiding the other two edges.
\end{theorem}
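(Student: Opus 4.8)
The backward implication is immediate: a Barnette graph is cubic, connected and, being bipartite, triangle‑free, so it contains a path $x_0x_1x_2x_3$ on four vertices (take a vertex, a neighbour, a further neighbour and again a further one, using triangle‑freeness to keep them distinct), and a Hamiltonian cycle through its middle edge $x_1x_2$ avoiding $x_0x_1$ and $x_2x_3$ is in particular a Hamiltonian cycle. Hence $\overline{x}$-$y$-$\overline{z}$-Hamiltonicity of every Barnette graph implies Barnette's conjecture.

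For the forward implication, assume Barnette's conjecture. Let $G$ be a Barnette graph and $x_0x_1x_2x_3$ a path; put $e_i=x_{i-1}x_i$, let $f_1,f_2$ be the third edges at $x_1,x_2$, and $y_1,y_2$ their other endpoints. Since adjacent vertices of a bipartite graph have no common neighbour, $x_0,x_1,x_2,x_3,y_1,y_2$ are pairwise distinct. For a Hamiltonian cycle of $G$, avoiding both $e_1$ and $e_3$ is equivalent to using all of $e_2,f_1,f_2$: using $e_2$ and $f_1$ at $x_1$ forbids $e_1$, and symmetrically at $x_2$. So it suffices to force the three edges $e_2,f_1,f_2$ into a Hamiltonian cycle.

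The approach I would take is a \emph{gadget substitution}. Excise a plane disc containing $x_0,x_1,x_2,x_3$ together with the six edges leaving it, and glue in its place a planar, bipartite, cubic \emph{block} $B$ attached to $G$ at the same six outside vertices, designed so that in every Hamiltonian cycle of the resulting graph $G^{+}$ the part lying in $B$ is a family of disjoint paths covering $V(B)$ whose endpoints pair the two edges that formerly met at $x_0$, pair $f_1$ with $f_2$, and pair the two edges that formerly met at $x_3$ --- and so that $B$ realises this linkage pattern but no other. One checks that the box realises exactly this pattern precisely when the cycle avoids $e_1$ and $e_3$; hence contracting $B$ turns a Hamiltonian cycle of $G^{+}$ into one of $G$ using $e_2,f_1,f_2$, and conversely. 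Since $G^{+}$ is again cubic, bipartite, planar and $3$-connected, Barnette's conjecture gives a Hamiltonian cycle of $G^{+}$, and the proof concludes. (Two of the six contact vertices may coincide or be adjacent; those finitely many degenerate positions would be handled by minor variants of $B$.)

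The heart of the matter --- and the step I expect to be the real obstacle --- is constructing $B$. It must at the same time be $2$-colourable in agreement with the colours $G$ puts on the six contact vertices (which prescribes a parity pattern around the boundary of $B$); be drawable in the disc so that the boundary cyclic order of the six contact points matches the one inherited from $G$; yield a genuinely $3$-connected $G^{+}$, so that no $2$-cut is created where $B$ meets $G$ (in particular $B$ must be connected, indeed internally well connected); and pin the Hamiltonian behaviour down to the single prescribed linkage pattern, which is the delicate combinatorial core and which interacts with the bipartition through parity constraints on which pairs of contact points a spanning path can join. I would look for a small ``atomic'' block with the desired rigidity, build $B$ from a few copies of it so that $3$-connectivity is inherited while the rigidity survives, verify the connectivity and the uniqueness of the linkage by a finite check, and leave the colouring and the rotation as routine bookkeeping.
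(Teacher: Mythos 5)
This statement is quoted in the paper from the literature (Hertel 2005); the paper itself gives no proof, so there is nothing internal to compare your argument with. Judged on its own, your backward implication is fine (and essentially trivial), and your reduction of the forward implication to forcing the three edges $e_2,f_1,f_2$ into a Hamiltonian cycle is the correct reformulation. But the forward direction as written has a genuine gap: the gadget $B$ is never constructed, and its existence is exactly the mathematical content of the theorem. You list the properties $B$ must have --- cubic, bipartite with the boundary colours prescribed by $G$, planar in the disc with the inherited rotation of the six contact edges, producing a $3$-connected $G^{+}$, and, crucially, \emph{forcing} every Hamiltonian cycle of $G^{+}$ to use all six boundary edges with the one prescribed pairing --- and then say you ``would look for'' such a block. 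That last forcing property is very strong: a Hamiltonian cycle of $G^{+}$ could a priori cross the disc on only two or four of the six edges, or realise a different pairing, and ruling this out while staying inside the Barnette class (in particular respecting the parity constraints that bipartiteness imposes on which boundary pairs a spanning path of $B$ can join) is precisely where the known proofs of such equivalences (Kelmans-type and Hertel's) do their real work, via explicit transformations that are then verified case by case. Asserting ``one checks that the box realises exactly this pattern'' without exhibiting the box is not a proof step, and the degenerate positions of the six contact vertices are likewise waved away.

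So the plan is in the right spirit --- these equivalences are indeed proved by substitutions that stay within the class of Barnette graphs and force or forbid edges, much like the substitutions used elsewhere in this paper --- but as it stands the proposal proves only the easy direction and reduces the hard direction to an unconstructed object whose existence is not obvious and is not argued. To complete it you would have to exhibit $B$ explicitly (or a small family of gadgets covering the degenerate boundary configurations), verify the uniqueness of its linkage pattern, and check $3$-connectivity of $G^{+}$; alternatively, follow the published route and build the forcing out of several smaller steps (forcing one edge into a cycle, forbidding one edge) composed along the path $x_0x_1x_2x_3$.
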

\begin{theorem}[Florek $2016$ \cite{Flo16}, Alt, Payne, Schmidt, Wood $2016$ \cite{AltPayMic16}]
Let $G$ be a Barnette graph. Consider a vertex colouring of the dual graph of $G$ in red, green and blue and suppose that every red-green cycle contains a vertex of degree $4$. Then $G$ is Hamiltonian.
\end{theorem}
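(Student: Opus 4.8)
The plan is to pass to the dual triangulation and reduce Hamiltonicity of $G$ to planar tree-cotree duality. Since $G$ is cubic, bipartite, planar and $3$-connected, its dual $G^{*}$ is a $3$-connected plane triangulation all of whose vertices have even degree (an \emph{Eulerian triangulation}), and such a graph carries a proper $3$-colouring of its vertices which is unique up to renaming the colours; take this to be the red/green/blue colouring of the statement, with colour classes $R$, $\Gamma$, $B$. The first ingredient is the classical correspondence: \emph{if $V(G^{*})$ can be partitioned as $I\sqcup O$ with $G^{*}[I]$ and $G^{*}[O]$ both trees, then $G$ is Hamiltonian}. Indeed the $G^{*}$-edges between $I$ and $O$ form a bond, hence are dual to a single cycle of $G$; this cycle meets every vertex of $G$, since a vertex of $G$ is a triangular face of $G^{*}$ and that triangle cannot lie entirely in $I$ or entirely in $O$ when $G^{*}[I]$ and $G^{*}[O]$ contain no triangle. (The converse also holds, by counting the faces inside a Hamiltonian cycle, but will not be needed.) So it is enough to build one such partition.

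Set $H:=G^{*}[R\cup\Gamma]$. Deleting the independent set $B$ from the triangulation $G^{*}$ leaves a $2$-connected plane graph $H$ whose faces correspond bijectively to the blue vertices — the face of $b\in B$ being bounded by the link of $b$, an even cycle alternating red and green. The crucial observation is that, for a red or green vertex $v$, having degree $4$ in $G^{*}$ is the same as having degree $2$ in $H$, because the $G^{*}$-neighbours of $v$ alternate between the two other colours around $v$. Hence the hypothesis — every red-green cycle passes through a degree-$4$ vertex — says precisely that $H$ minus its degree-$2$ vertices is a forest $F$. So $H$ arises from $F$ by attaching \emph{segments}: maximal paths through degree-$2$ vertices, each joining two vertices of $F$. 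Contracting every component of $F$, and then every segment to a single edge, produces a $2$-connected plane multigraph $\widehat H$ whose vertices are the components of $F$, whose edges are the segments, and whose faces are again the blue vertices; thus a spanning tree of $\widehat H$ leaves out exactly $|B|-1$ edges.

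Now fix any spanning tree $T$ of $\widehat H$, let the cotree $T^{c}$ consist of the remaining $|B|-1$ segments, choose one interior (degree-$2$) vertex $s_{\sigma}$ on each $\sigma\in T^{c}$, and set $S:=\{s_{\sigma}:\sigma\in T^{c}\}$, $I:=B\cup S$ and $O:=(R\cup\Gamma)\setminus S$. Deleting $s_{\sigma}$ only breaks $\sigma$ into two pendant sub-paths, so the segments surviving intact are exactly those of $T$; hence $O=H-S$ stays connected, and as it has lost $|B|-1=\beta(H)$ independent cycles it is a tree. On the other side, each $s_{\sigma}$, being a degree-$4$ vertex of $G^{*}$, has exactly two blue neighbours, namely the two faces of $H$ flanking $\sigma$, and no two distinct $s_{\sigma}$ are adjacent in $G^{*}$. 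Therefore $G^{*}[I]$ is the graph on vertex set $B$ having, for each $\sigma\in T^{c}$, one edge between the two faces flanking $\sigma$, each edge subdivided once. Those two faces are exactly the endpoints of the dual edge of $\sigma$ in the plane dual $\widehat H^{*}$ (whose vertex set is $B$), and by planar tree-cotree duality the cotree $T^{c}$ of $\widehat H$ is a spanning tree of $\widehat H^{*}$. Hence $G^{*}[I]$ is a tree as well, and by the correspondence above $G$ is Hamiltonian.

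I expect the real content to be the translation of the hypothesis above together with the point that, after contraction, $\widehat H$ and $\widehat H^{*}$ are exact plane duals of one another; once that is in place, the construction works for \emph{any} spanning tree of $\widehat H$, and $G^{*}[I]$ and $G^{*}[O]$ become trees simultaneously, essentially for free. The rest is structural bookkeeping that must nonetheless be done carefully: that $H$ really is $2$-connected (so that each segment borders two \emph{distinct} faces, the same two all along it, and contraction behaves well); that every segment contains an interior vertex (a length-$1$ ``segment'' is an edge of $F$, not a segment); that loops of $\widehat H$ — segments whose two ends lie in the same component of $F$ — do no harm to tree-cotree duality; and the base cases $|B|\le 2$, where $F$ is empty and $H$ is a single edge or a single even cycle, which are immediate.
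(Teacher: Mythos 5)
The paper does not prove this statement at all: it is quoted as a known result of Florek and of Alt, Payne, Schmidt and Wood, so there is no in-paper argument to compare yours against; I can only assess your proposal on its own terms. On those terms it is correct and essentially complete. The chain of reductions is sound: properness of the $3$-colouring of the Eulerian triangulation $G^{*}$ forces the neighbours of a red or green vertex to alternate blue/non-blue, so $\deg_{G^{*}}(v)=4$ is indeed equivalent to $\deg_{H}(v)=2$ for $v\in R\cup\Gamma$, and the hypothesis becomes exactly ``$H$ minus its degree-$2$ vertices is a forest''; the Euler count $\beta(H)=|B|-1$ matches $|T^{c}|$; deleting one interior vertex per cotree segment kills all independent cycles while keeping connectivity through $F$ and the tree segments; and tree--cotree duality in the contracted plane multigraph $\widehat H$ makes $G^{*}[I]$ a once-subdivided spanning tree of $\widehat H^{*}$, so both parts are induced trees and the classical bond/cycle duality (Stein's criterion, in the direction you use) yields the Hamiltonian cycle. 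The items you flag at the end are genuinely the points that need writing out, but none is problematic: connectivity of $H$ follows because $G^{*}$ is a $3$-connected triangulation, so the link of each blue vertex is a cycle avoiding $B$, and any shortest path between two alleged components of $H$ would have a single blue interior vertex whose link already joins its endpoints; every face of $H$ is then bounded by such a link, which gives the face--blue-vertex bijection, the constancy of the two flanking faces along a segment, and (since $G^{*}$ is simple and $s_{\sigma}$ has four distinct neighbours, two of them blue) the distinctness of the two blue neighbours of each chosen vertex even without invoking $2$-connectivity explicitly; loops and multiple edges of $\widehat H$ are handled by the multigraph form of tree--cotree duality, and the degenerate case where $F=\emptyset$, $H$ an even cycle and $|B|=2$ is immediate as you say. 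One stylistic caution: the uniqueness of the $3$-colouring is irrelevant and slightly misleading to invoke -- you should simply work with the colouring given in the hypothesis, using only that it is proper, which is what your argument actually does.
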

\begin{theorem}[Kardo\v s $2020$ \cite{Kar20}]
Every finite, simple, cubic, planar, $3$-connected graph with faces of size up to $6$ is Hamiltonian. 
\end{theorem}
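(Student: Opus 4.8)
\emph{Proof proposal.} The plan is to argue by contradiction in the discharging-and-reducibility style familiar from proofs of the Four Colour Theorem. Suppose the statement fails and let $G$ be a counterexample with the fewest vertices; then $G$ is cubic, planar, $3$-connected, and every face has size $3$, $4$, $5$ or $6$. Writing $p_k$ for the number of faces of size $k$ and combining Euler's formula with $2|E|=3|V|$ and $\sum_k k\,p_k=2|E|$ gives
\[
3p_3+2p_4+p_5=12,
\]
so $G$ carries only a bounded amount of ``small-face defect'' and is locally hexagonal away from at most a dozen exceptional faces. (The extremal case $p_3=p_4=0$, $p_5=12$ is precisely the fullerene graphs, which is why this statement generalises their Hamiltonicity and why the bipartiteness hypothesis of \cref{thm-goodey} is genuinely being dropped.) The first task is to convert this near-hexagonal structure into a discharging argument: assign charge $6-k$ to each face of size $k$, so the total charge is $12$, push charge from hexagons toward the small faces by suitable local rules, and deduce that $G$ must contain one of a finite list of local configurations — an \emph{unavoidable set} of patches, each being a short bounding cycle together with its interior.

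The substantial task is \emph{reducibility}: showing that $G$ can contain none of these configurations. Given a configuration $C$, one replaces it by a smaller patch $C'$ with the same boundary, obtaining a graph $G'$ that is again cubic, planar, $3$-connected with faces of size $\le 6$ and has fewer vertices, so by minimality $G'$ is Hamiltonian; one then wants to carry a Hamiltonian cycle of $G'$ back across $C$. The obstruction is that Hamiltonicity is not a local property: what matters at the seam is the \emph{interface pattern} — which boundary edges the cycle uses and how the resulting entry points are joined up by paths running inside the patch — and a pattern realisable inside $C'$ need not be realisable inside $C$. I would therefore strengthen the induction hypothesis, proving that every graph in the class admits, for each admissible interface pattern on the boundary of any reduction patch, a spanning union of disjoint paths compatible with that pattern (collapsing to an honest Hamiltonian cycle when the pattern is ``closed''). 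Reducibility of $C$ then becomes a purely finite assertion, and this is where the computation enters: enumerate the configurations of the unavoidable set, enumerate the few boundary interface patterns, enumerate the internal fillings of $C$ and $C'$, and check the realisability/compatibility conditions by brute force; the large case count is the product of these enumerations.

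The main obstacle, as in Appel--Haken-type arguments, is the coupling between the two halves: one must choose discharging rules whose unavoidable set consists only of configurations small enough to be reducible. Proving reducibility is also harder than for colourings, because a single local exchange can split a Hamiltonian cycle into two disjoint cycles or destroy connectivity — exactly what forces the ``prescribed interface'' strengthening and careful bookkeeping about parity and about how internal path systems link up. One must additionally check throughout that the reduced graph remains $3$-connected and non-degenerate, since the class is not closed under arbitrary contractions; the resulting low-connectivity and small-order boundary cases are dealt with separately. Once an unavoidable set of reducible configurations is in hand, $G$ cannot exist and the theorem follows.
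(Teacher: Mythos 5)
This statement is not proved in the paper at all: it is quoted as a known result of Kardo\v s \cite{Kar20}, and the paper's own machinery (\cref{lem:graphsubstitutions} and the counting argument in \cref{thm-maintheorem}) is developed only for the bipartite class with faces of size up to $8$. So your text can only be judged on its own merits, and as it stands it is a programme rather than a proof. The identity $3p_3+2p_4+p_5=12$ and the minimal-counterexample framing are fine, but the two pillars that would constitute the actual content --- explicit discharging rules producing an unavoidable set, and a verified reducibility statement for each configuration in that set --- are nowhere carried out: no configuration, no substitute patch and no lifting verification is exhibited. That is precisely where all of the work (and, in Kardo\v s's paper, the heavy computation) lies.

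Moreover, two steps would likely fail as formulated. First, reducibility by ``a smaller patch $C'$ with the same boundary'' is constrained by the patch version of Euler's formula: for a disc patch with $b_2$ boundary vertices of degree $2$ and $b_3$ of degree $3$ inside the patch, the interior faces satisfy $\sum_f (6-|f|)=6-b_2+b_3$, so any substitute carries exactly the same curvature; for purely hexagonal patches the face count is then forced by the boundary, and in a large fullerene every bounded-radius neighbourhood of a pentagon is just a pentagon surrounded by rings of hexagons, for which no vertex-reducing same-boundary replacement of the required kind is available. Hence a finite unavoidable set of bounded configurations cannot consist only of reducible ones, and enlarging neighbourhoods on demand reopens an a priori unbounded case analysis --- the very difficulty the present paper controls in \cref{lem:graphsubstitutions} by a recursive neighbourhood-growing search, and only in the bipartite, face-size-$\le 8$ setting, where the identity $\nu_4=\nu_8+6$ together with Hall's theorem in \cref{thm-maintheorem} guarantees usable adjacencies; you offer no analogous global argument for the pentagon/hexagon class. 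Second, your strengthened induction hypothesis (every graph in the class realises \emph{every} admissible interface pattern on the boundary of every patch) is much stronger than needed and there is no reason to expect it to hold --- prescribing which boundary edges a Hamiltonian cycle must use or avoid is the kind of strengthening that, as Hertel's theorem quoted in the introduction illustrates for Barnette graphs, can be as hard as \cref{conj:Barnette} itself. What is actually needed, and what the paper's computation checks case by case, is an inclusion: every entry--exit pattern realisable through the substituted patch is realisable through the original one, possibly after growing the neighbourhood. Without these repairs and without the explicit finite lists and verifications, the argument does not establish the theorem; Kardo\v s's actual proof is a substantially different and far more involved computer-assisted argument.
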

\begin{theorem}[Brinkmann, Goedgebeur, Mckay $2022$ \cite{BriGoeMck22}]
Every Barnette graph with at most $90$ vertices is Hamiltonian.
\end{theorem}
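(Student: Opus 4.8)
The statement is a finite computational claim, so the plan is to reduce it to an exhaustive — and, crucially, feasible — computer search. Rather than work with Barnette graphs directly I would pass to their planar duals. Under planar duality a cubic, planar, $3$-connected graph becomes a planar triangulation, bipartiteness of the primal becomes the condition that the dual is \emph{Eulerian} (every vertex of even degree), and $3$-connectivity is self-dual; moreover Euler's formula shows that a Barnette graph on $n$ vertices has exactly $n/2+2$ faces. Hence Barnette graphs on at most $90$ vertices correspond bijectively to $3$-connected Eulerian planar triangulations on at most $47$ vertices, of which there are far fewer and which can be generated without isomorphs by an adaptation of a planar-triangulation generator such as \texttt{plantri} (generate planar triangulations on at most $47$ vertices and keep exactly those in which every vertex has even degree). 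Dualising the output then lists every Barnette graph with at most $90$ vertices, each produced exactly once.

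Before testing Hamiltonicity I would trim the list. Any graph satisfying the hypotheses of \cref{thm-goodey} (all faces of size at most $6$), or the red--green colouring criterion of Florek / Alt--Payne--Schmidt--Wood, is already known to be Hamiltonian and can be dropped. One may also try to invoke a reduction along non-trivial $3$-edge-cuts — split the graph at such a cut and replace one side by a short gadget so as to preserve cubicity, bipartiteness and planarity — which, after checking that a Hamiltonian cycle of the original corresponds to Hamiltonian cycles with prescribed behaviour in the two smaller Barnette graphs, reduces the task to cyclically $4$-edge-connected Barnette graphs (dually, triangulations with no separating triangle); verifying that this reduction really preserves all four properties and lifts Hamiltonicity is the delicate point, since the contracted side generally has to become a short path rather than a single vertex. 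For each surviving graph, run a Hamiltonicity test: since the graphs are cubic, the complement of a Hamiltonian cycle is a perfect matching, so it suffices to enumerate perfect matchings with a tailored backtracking routine and test whether the complementary $2$-factor is connected, handing the handful of genuinely hard instances to a SAT or ILP solver. Each run either outputs a Hamiltonian cycle — which is then trivially checked — or certifies that there is none; the content of the theorem is that the latter never occurs.

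The main obstacle is the number of cases, not any single case: even after these reductions, the count of Barnette graphs on up to $90$ vertices grows roughly exponentially in $n$ and is large, so the search must be carefully engineered — isomorph-free generation, a fast Hamiltonicity routine with aggressive pruning along faces and small separators, and parallelisation across many cores. Almost as important is trustworthiness: I would implement both the generation and the Hamiltonicity test twice and independently, cross-check the graph counts against published enumerations of cubic bipartite polyhedra (or OEIS), and re-verify every returned Hamiltonian cycle directly, since a single missed graph or a buggy search would invalidate the whole argument. The genuine difficulty is thus in engineering and verification; the mathematical input beyond the duality dictionary and the reductions above is minimal.
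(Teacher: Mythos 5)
First, a point of comparison: the paper you are reading does not prove this statement at all — it is quoted as a known result of Brinkmann, Goedgebeur and McKay \cite{BriGoeMck22}, so there is no in-paper proof to match your proposal against. Your outline does capture the general flavour of how such results are obtained (isomorph-free generation of the relevant graph class plus Hamiltonicity testing, with the duality dictionary between Barnette graphs on $n$ vertices and simple Eulerian plane triangulations on $n/2+2$ vertices correctly stated).

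The genuine gap is feasibility, and it is not a matter of ``careful engineering''. Generating all planar triangulations on up to $47$ vertices and filtering for even degrees is hopeless: the number of such triangulations grows exponentially with a large base, and already well before $47$ vertices the counts exceed anything enumerable ($10^{30}$ and far beyond). Even direct isomorph-free generation of all Eulerian triangulations, i.e.\ of all Barnette graphs with up to $90$ vertices, is of a similar astronomical order, so the brute-force core of your plan cannot be executed; verifications of this kind only reach $90$ vertices because a \emph{minimal} non-Hamiltonian Barnette graph can be shown to lie in a drastically smaller subclass (cyclic $4$-connectedness, i.e.\ no separating triangle in the dual, together with further face-configuration reductions and stronger ``through one edge, avoiding another'' cycle properties that make the reductions lift Hamiltonian cycles). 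In your write-up exactly this ingredient is demoted to an optional trimming step (``one may also try to invoke a reduction along non-trivial $3$-edge-cuts''), with the lifting argument left unverified, whereas it is the load-bearing part of any proof of the $90$-vertex bound; without it the proposal does not yield a proof, and with it the proposal would need to specify and justify the reductions rather than treat them as optimisations.
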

\begin{theorem}[Florek $2023$ \cite{Flo23}]
Every Barnette graph, in which every face is adjacent to at most $4$ faces of size greater than $4$, is Hamiltonian.
\end{theorem}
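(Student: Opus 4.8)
The plan is to pass to the dual graph of $G$ and reduce the statement to the colouring criterion stated above (the theorem of Florek and of Alt, Payne, Schmidt and Wood). Write $D$ for the dual of $G$; it is well defined because $G$ is $3$-connected, and since $G$ is cubic and bipartite, $D$ is a simple, $3$-connected plane triangulation all of whose vertices have even degree, that is, an \emph{Eulerian triangulation}. By the classical fact that a plane triangulation is properly $3$-vertex-colourable exactly when it is Eulerian, $D$ carries a proper $3$-colouring $V(D)=V_{1}\sqcup V_{2}\sqcup V_{3}$, and, by propagating colours across triangles, this colouring is unique up to permuting the three colours. Under duality, a face of $G$ of size $4$ corresponds to a vertex of $D$ of degree $4$ --- call such a vertex \emph{small} --- and a face of size $>4$ (necessarily of size at least $6$, as $G$ is bipartite) to a vertex of degree at least $6$ --- a \emph{big} vertex; write $B$ for the set of big vertices and $B_{i}:=V_{i}\cap B$. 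In this language the hypothesis says exactly: every vertex of $D$ has at most $4$ big neighbours.

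First I would apply the colouring criterion to the proper $3$-colouring of $D$: it suffices to choose which two of the classes $V_{1},V_{2},V_{3}$ play the roles of red and green so that every red-green cycle of $D$ passes through a small vertex. Since each $V_{i}$ is independent, a red-green cycle alternates between the two chosen classes and avoids every small vertex if and only if all of its vertices are big, that is, if and only if it is a cycle of $D[B_{i}\cup B_{j}]$. Hence the whole statement reduces to the structural claim that, for \emph{some} pair $\{i,j\}$, the graph $D[B_{i}\cup B_{j}]$ is acyclic --- equivalently, that one of the colour classes $B_{k}$ is a feedback vertex set of the ``big subgraph'' $D[B]$.

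Proving this structural claim is the heart of the argument, and it is where the hypothesis must be used essentially. The graph $D[B]$ is plane, inherits a proper $3$-colouring, and by the hypothesis satisfies $\Delta(D[B])\le 4$; but being plane, properly $3$-coloured and of maximum degree $4$ does not by itself force a colour class to be a feedback vertex set (the octahedron $K_{2,2,2}$ is a counterexample), so I would bring in the surrounding triangulation. In $D$ a big vertex of degree $d\ge 6$ has at least $d-4\ge 2$ small neighbours, so the big vertices sit inside ``collars'' of small ones; the aim would be to show that a cycle of $D[B]$ with no small vertex must, on one of its two sides, enclose a portion of $D$ containing a vertex with $5$ or more big neighbours, contradicting the hypothesis --- or, failing a clean topological argument, to run a finite case analysis classifying the cyclic ``cores'' of $D[B]$ permitted by the bound $\Delta(D[B])\le 4$ and checking that the forced $3$-colouring places a feedback vertex set in one of the three classes in every case, in particular ruling out octahedral cores. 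The main obstacle is precisely this step: upgrading the purely local ``at most $4$ big neighbours'' condition to a global statement about the entire cycle space of $D[B]$, using the planarity and even-degree structure of $D$ in an essential rather than incidental way. Should this resist, a natural fallback is to abandon the global colouring and instead construct a Hamiltonian cycle by induction, contracting a small face together with a controlled patch of its neighbourhood and repairing the contraction by Goodey-type local replacements, the hypothesis ensuring that a usable small face is available in every region of $G$.
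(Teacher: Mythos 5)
Your reduction step is fine as far as it goes: passing to the dual Eulerian triangulation $D$, using its (essentially unique) proper $3$-colouring, translating the hypothesis into ``every vertex of $D$ has at most $4$ big neighbours'', and observing that the Florek/Alt--Payne--Schmidt--Wood criterion would follow once some pair of colour classes restricted to the big vertices induces a forest. But the proposal stops exactly at the point where a proof would have to begin. The statement ``for some pair $\{i,j\}$ the graph $D[B_i\cup B_j]$ is acyclic'' is the entire content of the theorem after your translation, and you do not prove it: you correctly note that planarity, proper $3$-colourability and $\Delta(D[B])\le 4$ alone do not suffice (the octahedron), you gesture at a topological enclosure argument and at a finite case analysis of ``cyclic cores'', and you offer an inductive fallback via face contractions --- but none of these is carried out, and it is not even argued that an octahedral (or longer even-cycle-pair) configuration of big vertices cannot occur inside an Eulerian triangulation satisfying the degree hypothesis. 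So what you have is a plausible reduction plus a research plan, not a proof; the essential use of the hypothesis, which you yourself identify as ``the heart of the argument'', is missing.

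For calibration: the paper you are reading does not prove this statement either --- it is quoted as a known result of Florek (2023) and used only as context --- so there is no in-paper argument to compare against. If you want to complete your route, the missing lemma you must establish is precisely that in an Eulerian plane triangulation in which every vertex has at most $4$ neighbours of degree $\ge 6$, one colour class of big vertices meets every cycle of the big subgraph; expect to use the parity/even-degree structure of $D$ and a discharging or outerplanarity-type argument on $D[B]$, not just the local degree bound.
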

\begin{theorem}[Shao, Wu $2025$ \cite{ShaWu25}]
Every finite, simple, cubic, planar, $2$-connected graph with faces of size up to $6$ is Hamiltonian.
\end{theorem}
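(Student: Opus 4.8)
The plan is to settle the $3$-connected case by Kardoš's theorem and to show that the only $2$-connected graph in the statement which fails to be $3$-connected is one fixed small graph, whose Hamiltonicity is then checked directly. So suppose $G$ is $2$-connected but not $3$-connected. For cubic graphs vertex- and edge-connectivity coincide, so $G$ has a $2$-edge-cut; among all of them choose $\{e_1,e_2\}$ so that some component $A$ of $G-\{e_1,e_2\}$ has as few vertices as possible, let $B$ be the other component, and write $e_i=a_ib_i$ with $a_i\in A$ and $b_i\in B$. Put $A^{+}:=A+a_1a_2$ and $B^{+}:=B+b_1b_2$. Minimality of $|A|$ gives $a_1\not\sim a_2$ in $A$, and it forces $A^{+}$ to be $3$-connected: a bridge of $A^{+}$ would be a bridge of $G$ or would disconnect $A$, both impossible; a $2$-edge-cut of $A^{+}$ would yield a $2$-edge-cut of $G$ with a strictly smaller side; and a cubic $3$-edge-connected graph is $3$-connected. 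The $2$-connectedness of $G$ makes $B^{+}$ bridgeless.

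The next ingredient is face bookkeeping. As $G$ is bridgeless, $\{e_1,e_2\}$ is a minimal edge cut and hence a pair of parallel edges in the planar dual, so $G$ has exactly two faces $f,f'$ incident with $e_1$, and they are also the two faces incident with $e_2$. Each of $f,f'$ is a cycle running through $e_1$, then a path inside $B$ from $b_1$ to $b_2$, then $e_2$, then a path inside $A$ from $a_2$ to $a_1$; writing $P_A,P_B$ for these two paths in the case of $f$, the face $f$ corresponds in $A^{+}$ to a face $\widetilde{f}_A$ bounded by $P_A\cup\{a_1a_2\}$ and in $B^{+}$ to a face $\widetilde{f}_B$ bounded by $P_B\cup\{b_1b_2\}$, and likewise for $f'$. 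In particular
\[
|f| \;=\; |\widetilde{f}_A| + |\widetilde{f}_B|
\qquad\text{and}\qquad
|f'| \;=\; |\widetilde{f}'_A| + |\widetilde{f}'_B| ,
\]
while every remaining face of $A^{+}$ or $B^{+}$ is a face of $G$, so has size at most $6$.

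The decisive step is to combine this with $|f|,|f'|\le 6$. Since $A^{+}$ is simple and $3$-connected, $|\widetilde{f}_A|,|\widetilde{f}'_A|\ge 3$. I would then show $b_1\not\sim b_2$: if $b_1b_2$ were an edge of $B$, a short case analysis according to which faces of $G$ it borders produces in every case either a bridge of $G$ or a face bounded by two parallel edges, contradicting $2$-connectedness or the identities above; I expect this case analysis to be the main obstacle, since the identities become rigid only once both sides of the cut are pinned down simultaneously. Granting $b_1\not\sim b_2$, the faces $\widetilde{f}_B,\widetilde{f}'_B$ are cycles of length at least $3$, so all four terms above equal $3$. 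Thus $a_1a_2$ lies on two triangular faces of the simple $3$-connected cubic planar graph $A^{+}$, which forces $A^{+}\cong K_4$ (its endpoints are then saturated by the two triangles, so any further attachment would create a $2$-vertex-cut), and symmetrically $B^{+}\cong K_4$. Hence $G$ is the unique $8$-vertex graph obtained from two disjoint copies of $K_4$ by deleting one edge from each and joining the four resulting degree-$2$ vertices by two new edges between the copies; as $K_4-e$ has a Hamiltonian path between its two degree-$2$ vertices, $G$ is Hamiltonian. This completes the proof, the only nonelementary input being Kardoš's theorem in the $3$-connected case.
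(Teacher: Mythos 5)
First, note that the paper does not prove this statement at all; it is quoted from Shao and Wu, so your argument has to stand on its own. It does not, because the final classification step is false. The minimal choice of the $2$-edge-cut gives $3$-connectedness of $A^{+}$ only; for $B^{+}$ you know merely $2$-connectedness, so from ``$b_1b_2$ lies on two triangular faces of $B^{+}$'' you cannot conclude $B^{+}\cong K_4$: the saturation argument you use for $A^{+}$ needs the absence of $2$-vertex-cuts, and there is no symmetry to invoke. Concretely, the claim that $G$ must be the $8$-vertex graph built from two copies of $K_4-e$ is refuted by a $12$-vertex example: take $K_4-e$ on $\{a_1,a_2,x,y\}$ (missing edge $a_1a_2$), a $4$-cycle $b_1ub_2w$, a second copy of $K_4-e$ on $\{z,z',p,q\}$ (missing edge $zz'$), and add the edges $a_1b_1$, $a_2b_2$, $uz$, $wz'$. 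This graph is simple, cubic, planar, $2$-connected but not $3$-connected, and it has an embedding with face sizes $3,3,3,3,6,6,6,6$; for the cut $\{a_1b_1,a_2b_2\}$ one indeed gets $A^{+}\cong K_4$, but $B^{+}$ has eight vertices, with $\{u,w\}$ being exactly the $2$-vertex-cut your ``symmetrically'' overlooks. Iterating the construction gives arbitrarily long chains, so the non-$3$-connected case is not a single sporadic graph and your proof does not cover it.

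Two further remarks. The step $b_1\not\sim b_2$, which you leave as a sketch and flag as the main obstacle, can in fact be closed: if $b_1b_2\in E(B)$, then one of the two faces through the cut has $P_B=b_1b_2$, the bound $|f'|\le 6$ together with $|P'_A|\ge 2$ forces $P'_B=b_1yb_2$ for a common neighbour $y$ of $b_1,b_2$, and then the triangular face $b_1b_2y$ and $f'$ would have to occupy the same corner at $y$, a contradiction. But closing it does not repair the proof: what your inequalities really show is only that $\widetilde{f}_A,\widetilde{f}'_A,\widetilde{f}_B,\widetilde{f}'_B$ are all triangles, i.e.\ that both sides look like $K_4-e$ \emph{locally} near the cut, not globally. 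The natural repair is an induction on the number of vertices, splitting $G$ along the cut into $A^{+}$ ($3$-connected, faces still of size at most $6$) and $B^{+}$ (again $2$-connected with faces of size at most $6$, but smaller); the genuine difficulty, and the reason the quoted result is not a two-line corollary of Kardo\v s's theorem, is that gluing requires Hamiltonian cycles of $A^{+}$ and $B^{+}$ passing through the added edges $a_1a_2$ and $b_1b_2$, which neither Kardo\v s's theorem as stated nor a bare induction hypothesis provides.
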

The main \cref{thm-mainthminintro} of this paper is a generalisation of \cref{thm-goodey}. \cref{cor-mainthmintro} follows using the uniqueness of the dual graph of a $3$-connected graph (see Theorem $4.3.2$ in \cite{Die05}).
\begin{theorem}[See \cref{thm-maintheorem}]\label{thm-mainthminintro}
Let $G$ be a finite, simple graph that is cubic, bipartite, planar and connected. Fix an embedding of the graph in $\mathbb{R}^{2}$ and suppose that every face of the embedding is of size at most $8$. Then $G$ is Hamiltonian.
\end{theorem}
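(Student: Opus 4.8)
The plan is to work with the dual graph $G^{*}$, which (by planarity and $2$-connectedness) has vertices of degree $3$ up to degree $8$, and whose edges correspond to edges of $G$. A Hamiltonian cycle in $G$ splits the plane into two parts, each a union of faces; dually this is a partition of $V(G^{*})$ into two sets, and by Euler's formula plus the bipartiteness of $G$ (i.e., the Eulerianity/parity condition on $G^{*}$), one checks that a set $S \subseteq V(G^{*})$ induces such a partition exactly when the induced subgraphs $G^{*}[S]$ and $G^{*}[V\setminus S]$ are both trees (this is the classical reformulation used by Goodey and others). So the whole problem becomes: find a spanning subgraph decomposition of $G^{*}$ into two induced trees, under the degree constraint $\deg_{G^{*}} \le 8$. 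First I would set up this dual reformulation carefully, fixing notation for the "colouring" of dual vertices into the two trees, and reduce to showing such a $2$-colouring always exists.

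Next I would develop the structural/reduction machinery on $G^{*}$: identify a bounded list of local configurations (small subgraphs of $G^{*}$, e.g. a face of $G$ of given size together with its neighbouring faces and the degrees of those) that are "reducible" in the sense that any partial tree-colouring extending around the boundary of the configuration can be completed inside it; and prove an unavoidability statement — every $G^{*}$ arising here (equivalently, every plane graph with the right parity condition and max degree $8$) contains one of these configurations. This is the discharging part: assign charges to vertices/faces of $G^{*}$, use Euler's formula $\sum (6 - \deg) = 12$ (or the analogue after accounting for the parity condition) to force an excess of small-degree structure somewhere, and redistribute to pin down the finite configuration list. Then a reducibility check on each listed configuration finishes an induction on $|V(G)|$: given a Barnette-type graph with faces up to size $8$, locate a reducible configuration, contract/reduce it to a strictly smaller such graph, apply the inductive hypothesis to get a tree-$2$-colouring of the smaller dual, and lift it back across the configuration.

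The main obstacle — and the reason the paper openly says "parts of the proof are computational, including a distinction of $339{.}068{.}624$ cases" — is that neither the unavoidable set nor the reducibility verifications are small enough to do by hand. I expect the discharging argument to produce a large but finite family of reducible configurations, each of which is checked by brute-force enumeration of all boundary colourings and all completions; managing the combinatorial explosion (choosing good reduction moves so that reducibility actually holds, keeping the configuration list finite, and ensuring the reductions preserve simplicity, $2$-connectedness, bipartiteness and the face-size bound) is the real work. A secondary subtlety is the base case and the "degenerate" reductions: small graphs, multi-edges or cut-pairs introduced by a reduction, and faces that shrink to size $2$ must be handled separately, likely by a short direct verification or by an auxiliary lemma (cf. \cref{lem:conn->2-conn} and \cref{rem-con->2-conn}) guaranteeing we stay inside the relevant graph class. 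Finally, \cref{cor-mainthmintro} follows immediately because for a $3$-connected planar graph the embedding, hence the face set, is unique, so "faces of size at most $8$" is an intrinsic property and \cref{thm-mainthminintro} applies.
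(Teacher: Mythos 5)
Your overall architecture --- induction on a minimal counterexample, an unavoidable set of bounded local configurations, and computer-verified reducibility --- is the same kind of strategy the paper uses, but the two load-bearing steps are either missing or stated in a form that would not go through. First, the unavoidability step: you leave the configuration list and the discharging rules completely open. The paper does not need a discharging argument at all; it uses Euler's formula to get $\nu_{4}=\nu_{8}+6$ and then observes that if no reducible configuration occurs, every $4$-face is adjacent to at least two $8$-faces and every $8$-face to at most two $4$-faces, so Hall's marriage theorem matches the $4$-faces injectively into the $8$-faces and gives $\nu_{4}\le\nu_{8}$, a contradiction. (Your dual set-up is fine in principle, but note the dual vertex degrees here are $4$, $6$, $8$, not $3$ through $8$, and with only $2$-connectedness the dual is a multigraph, so ``induced tree'' must be read there.)

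The more serious gap is your notion of reducibility: you require that \emph{every} partial tree-colouring around the boundary of a configuration extends across its interior, i.e., in primal terms, that every way a Hamiltonian cycle of the reduced graph can meet the boundary of the patch is completable inside the original patch. The paper's computation shows that this strong local completion property frequently fails for exactly the configurations that the unavoidability argument forces on you: the verification in \cref{lem:graphsubstitutions} must repeatedly enlarge the neighbourhood by attaching a further $4$-, $6$- or $8$-face whenever the local check fails, and the statement finally proved is only that a Hamiltonian cycle of the reduced graph $H$ can be modified \emph{in a neighbourhood of} the substituted part (not merely inside it) to yield one of $G$ --- this recursive enlargement is precisely where the $339.068.624$ cases come from. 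Without either exhibiting configurations satisfying your stronger property (which would amount to a much stronger local statement than the paper establishes) or weakening reducibility to the paper's relative, neighbourhood-modifying form, the inductive step of your plan does not yet exist; as written, the proposal is a correct-shaped outline whose two essential ingredients --- the explicit unavoidable set and a reducibility notion that is actually verifiable --- are not supplied.
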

\begin{corollary}\label{cor-mainthmintro}
Every Barnette graph with faces of size up to $8$ is Hamiltonian.
\end{corollary}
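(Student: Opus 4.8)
The plan is to obtain \cref{cor-mainthmintro} at once from \cref{thm-mainthminintro} and to concentrate the work on the latter. For the first step, the dual graph of a $3$-connected planar graph is unique (Theorem $4.3.2$ in \cite{Die05}), so the multiset of face sizes of a Barnette graph does not depend on the chosen plane embedding; hence the hypothesis ``faces of size up to $8$'' is well posed and \cref{thm-mainthminintro} applies verbatim. I would prove \cref{thm-mainthminintro} by induction on $|V(G)|$. The statement is phrased for graphs that are merely connected --- equivalently, by \cref{rem-con->2-conn}, $2$-connected --- rather than for Barnette graphs precisely because the reductions used in the induction typically destroy $3$-connectivity even when applied to a Barnette graph, so the wider class is the one the induction can carry. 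It is moreover convenient to strengthen the induction hypothesis, say to a statement that prescribes the behaviour of the Hamiltonian cycle on a short path (in the spirit of Hertel's reformulation), which makes the rerouting step below far more flexible.

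For the inductive step I would assemble a list of \emph{reducible configurations}: bounded-diameter pieces of the face structure --- a distinguished face together with enough of its neighbourhood --- each with the property that, whenever it occurs, $G$ can be cut along a short closed curve, the excised disc replaced by a fixed smaller patch, and the resulting graph $G'$ is again cubic, bipartite, planar, $2$-connected, with all faces of size at most $8$ and with strictly fewer vertices, in such a way that \emph{every} Hamiltonian cycle of $G'$ can be routed through the re-inserted disc to a Hamiltonian cycle of $G$. Even the most obvious reduction shows the difficulty. Contract a quadrilateral $v_1v_2v_3v_4$ along a diagonal: delete its four vertices, each $v_i$ having a unique further neighbour $u_i$ outside the quadrilateral, and add new edges $u_1u_2$ and $u_3u_4$ drawn inside the flanking faces that respectively contain both of their endpoints. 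This preserves bipartiteness, since $u_i$ and $v_i$ lie in opposite colour classes while $v_1,v_2$ (resp.\ $v_3,v_4$) do as well, so each new edge joins the two classes --- whereas the other pairing $u_1u_3$, $u_2u_4$ would fail. But the two flanking faces not receiving a new edge merge into a single face of size $g+g'-4$, which exceeds $8$ unless the neighbouring sizes $g,g'$ are controlled; and whether a given Hamiltonian cycle of $G'$ lifts depends on which of $u_1u_2,u_3u_4$ it uses, so one needs several patches per configuration together with a full case split over the behaviour of the cycle on the boundary of the excised disc. Carrying this out systematically for quadrilaterals, hexagons and octagons in all of their local arrangements is where the roughly $339$ million computer-checked cases occur.

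That a reducible configuration is always present would be established by a discharging argument on the faces. Give each face $f$ the charge $|f|-6$; combining Euler's formula $|V|-|E|+|F|=2$ with $3|V|=2|E|$ and $\sum_f|f|=2|E|$ gives $\sum_f(|f|-6)=-12$, so quadrilaterals carry $-2$, hexagons $0$ and octagons $+2$ --- these being the only possible sizes, as $G$ is simple, $2$-connected and bipartite with faces of size at most $8$. Assuming $G$ contains no reducible configuration (this is where the absence is used: it forbids certain local patterns), I would redistribute charge by a fixed set of local rules --- octagons and hexagons sending charge toward nearby quadrilaterals --- so as to make every face end with non-negative charge, or, failing that, so that only finitely many small graphs can still carry a negative face. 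Since discharging preserves the total, non-negative charge everywhere contradicts the total $-12$, so a reducible configuration must exist and the induction closes. The base of the induction is the finite list of small graphs left over, checked directly; and when all faces have size at most $6$ the graph is anyway Hamiltonian by \cref{thm-goodey} or, more generally, by the Shao--Wu theorem \cite{ShaWu25}.

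The main obstacle I anticipate is the joint design of the reducer set and the discharging rules so that the argument genuinely closes. The reductions must be numerous and flexible enough for the discharging rules to soak up all of the negative charge carried by the quadrilaterals, yet each reduction must simultaneously keep every face of size at most $8$ despite the tendency of a cut to amalgamate neighbouring faces, avoid introducing a cut vertex, a loop or a parallel edge, and admit a valid rerouting of \emph{every} Hamiltonian cycle of the reduced graph. Reconciling these three requirements with the charge-counting constraint is what forces a case distinction far beyond hand computation, and it is the verification of that distinction --- especially the ``every Hamiltonian cycle lifts'' clause for each individual patch --- that is the true heart of the proof.
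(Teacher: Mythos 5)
Your deduction of \cref{cor-mainthmintro} is exactly the paper's: a Barnette graph is connected, and by uniqueness of the dual of a $3$-connected planar graph the face sizes are embedding-independent, so \cref{thm-mainthminintro} applies directly; that part is correct and identical. Your sketch of \cref{thm-mainthminintro} itself also follows the paper's strategy in outline (a minimal counterexample, computer-verified local substitutions through which every Hamiltonian cycle of the reduced graph lifts, and a global count via Euler's formula), but the closing step differs in a way worth recording. The paper does not run an open-ended discharging scheme with possible sporadic leftovers: its \crefrange{lem:tech1}{lem:tech3} show that a substitution is available whenever there is a $4$-face adjacent to at most one $8$-face, or an $8$-face adjacent to three or more $4$-faces, so in a minimal counterexample every $4$-face is adjacent to at least two $8$-faces and every $8$-face to at most two $4$-faces; Hall's marriage theorem then matches the $4$-faces injectively into the $8$-faces, giving $\nu_{4}\le\nu_{8}$, which contradicts $\nu_{4}=\nu_{8}+6$ from Euler's formula (your charge total $-12$ is the same identity). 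The only sporadic case is the cube, arising when a $4$-face is surrounded by four $4$-faces. Two further differences: no Hertel-style strengthening of the induction hypothesis is needed, since \cref{lem:graphsubstitutions} already lets an \emph{arbitrary} Hamiltonian cycle of $H$ be rerouted within a neighbourhood of the substituted part; and the problem you flag with the naive quadrilateral contraction (merged faces exceeding size $8$, cycles using the ``wrong'' boundary edges) is exactly what the paper's catalogue of substitutions and its recursive enlargement of neighbourhoods in the computation are designed to absorb, so your anticipated obstacle is real but is resolved by the fixed finite list of substitutions rather than by a bespoke discharging design.
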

Parts of the proof of \cref{thm-maintheorem} are computer-aided, including distinction of $339.068.624$ cases.

The upper bound of $8$ in \cref{thm-mainthminintro} is sharp. That is, there exist cubic, bipartite, planar and connected graphs, which are not Hamiltonian and in which every face of every embedding in $\mathbb{R}^2$ is of size up to $10$. An example of such a graph is given in \cref{f:10HolesNotHamil}.

Nonetheless, we discuss in \cref{sec:prospect} how the method of our proof could perhaps be used to prove even stronger statements. Namely, computations suggest that Hamiltonicity of Barnette graphs could turn out to be, in a sense, a ``local'' property.
\begin{figure}[ht]
\centering
\includegraphics[width=0.25\textwidth]{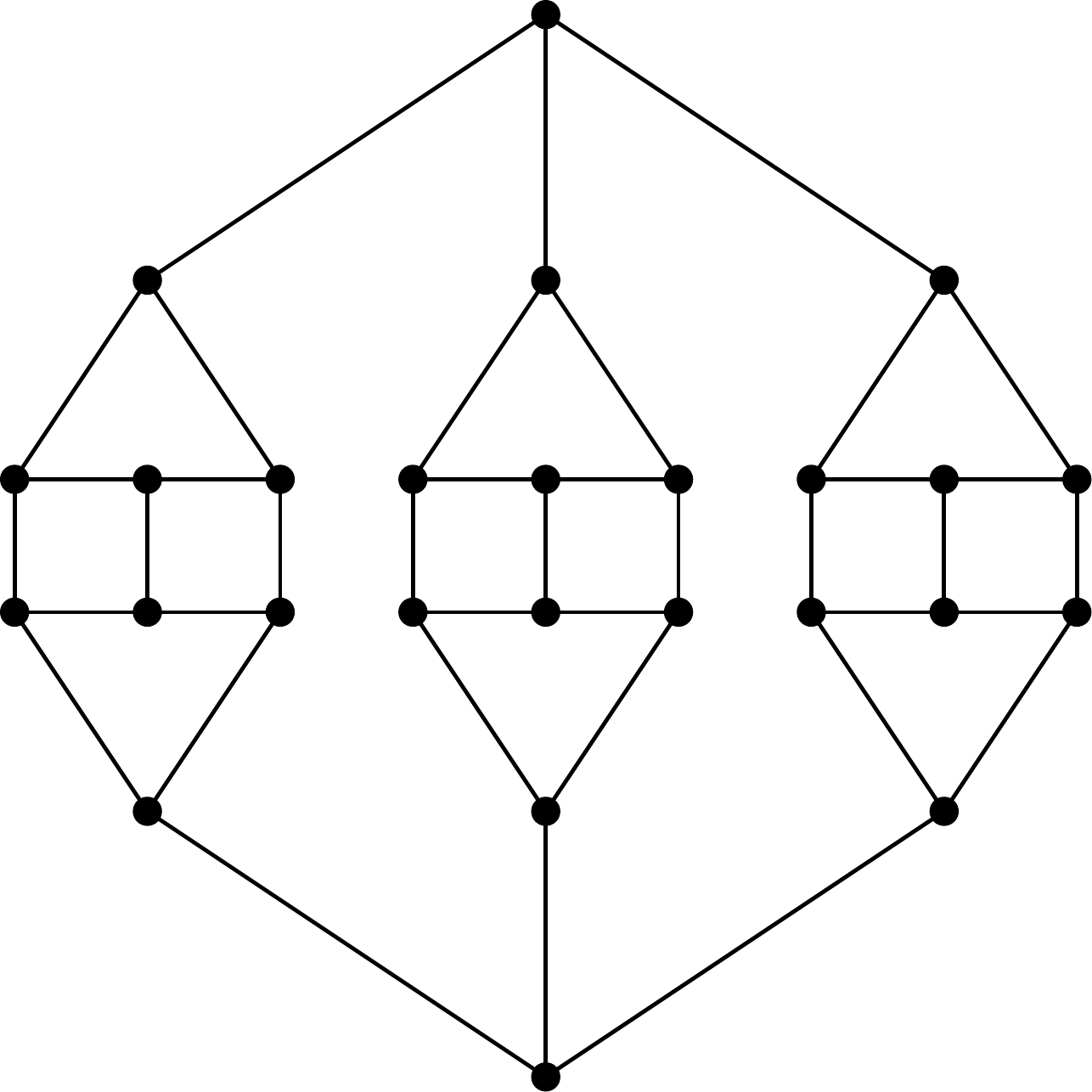}
\caption[A cubic, bipartite, planar, connected, but not Hamiltonian graph.]{A cubic, bipartite, planar and connected graph, which is not Hamiltonian; its construction is inspired by \cite{AkiNisSai80}. It is NP-complete to check, whether cubic, bipartite, planar and connected graphs are Hamiltonian, see \cref{thm-2-conc-NP-complete} and \cref{rem-con->2-conn}.}
\label{f:10HolesNotHamil}
\end{figure}
\subsection*{Acknowledgement}
The author would like to thank Michael Cuntz, Lukas Kühne and Raman Sanyal\footnote{Thank you for proofreading!\label{fn:proofreading}} for organising the Dive into Research of the DFG priority programme SPP 2458 Combinatorial Synergies, during which he familiarised himself with Barnette graphs. He would also like to thank Daniele Agostini\footref{fn:proofreading}, Simon Brandhorst\footref{fn:proofreading}, Jonathan Frey, Veronika Körber\footref{fn:proofreading}, Hannah Markwig\footref{fn:proofreading}\footnote{Thank you for the great supervision!}, Florian Rieg\footnote{Thank you for the introduction in graph theory!}, Jan Stricker\footref{fn:proofreading} and Jasmin Walizadeh for the helpful discussions. The author acknowledges support by the DFG SFB-TRR 195 Symbolic Tools in Mathematics and their Application.
\section{Preliminaries}
\begin{definition}[Hamiltonian Graph]
A finite, simple graph is called \emph{Hamiltonian} if it has a \emph{Hamiltonian cycle}, which is a cycle that visits every vertex of the graph exactly once.
\end{definition}
\begin{definition}[Cubic Graph]
A simple graph is called \emph{cubic} if every vertex is adjacent to precisely three vertices.
\end{definition}
\begin{definition}[Vertex Colouring]
A \emph{vertex colouring} of a simple graph is a map from the set of vertices to a set of colours such that adjacent vertices differ in colour.
	
The \emph{chromatic number} of a finite, simple graph is the minimal cardinality of a set of colours, such that a vertex colouring of the graph with this set of colours exists.
\end{definition}
\begin{definition}[Bipartite Graph]
A finite, simple graph is called \emph{bipartite} if its chromatic number is at most $2$.
\end{definition}
\begin{definition}[Planar Graph]
A finite, simple graph is called \emph{planar} if there exists an embedding of the graph in $\mathbb{R}^{2}$ such that edges do not cross.
\end{definition}
\begin{definition}[$k$-Connected Graph]
A finite, simple graph is called \emph{connected} if for every pair of vertices, there exists a path from one vertex to the other.

Let $k\in\mathbb{N}^{*}$. A finite, simple graph is called \emph{$k$-connected} if it is connected and stays so, if less than $k$ vertices are removed from the graph.
\end{definition}
\begin{definition}[Size of a Face]
Let $n\in\mathbb{N}$. The \emph{size} of a face of an embedding of a finite, simple, planar graph in $\mathbb{R}^{2}$ is the number of faces that are adjacent to this faces.
\end{definition}
\begin{definition}[Dual Graph]
Let $G$ be a finite, simple, planar graph. Fix an embedding $\varphi$ of $G$ in $\mathbb{R}^{2}$. The \emph{dual graph} of $G$ with respect to $\varphi$ is a graph whose vertices correspond to the faces of the embedding. Two vertices are adjacent if their corresponding faces are adjacent.
\end{definition}
\begin{definition}[Graph metric]
Let $G$ be a finite, simple, connected graph. We define the \emph{distance} between any two vertices $v$ and $w$ of $G$ as the number of edges on the shortest path in $G$ connecting them.
\end{definition}
\begin{remark}
The distance between vertices of a finite, simple, connected graph is a metric on the set of vertices.
\end{remark}
\begin{definition}[Neighbourhood of a Subgraph]
Let $G$ be a finite, simple, connected graph and let $G''\subseteq G'\subseteq G$ be subgraphs of $G$. $G'$ is called a \emph{neighbourhood} of $G''$ if the set of vertices of $G'$ is a neighbourhood of the set of vertices of $G''$ with respect to the graph metric of $G$.
\end{definition}
\section{Lifting Hamiltonian Cycles via Graph Substitutions}
\label{sec:LiftingHamiltonianCycles}
This section contains the computational aspects of this paper. Here, we introduce the notion of graph substitutions and give a computational proof of \cref{lem:graphsubstitutions}, which will play a crucial role in the proof of the main \cref{thm-maintheorem}.
\begin{definition}[Graph Substitution]
A \emph{graph substitution} is a quadruple $(G,H,\varphi,\psi)$ consisting of two finite, simple, planar graphs $G$ and $H$ and embeddings $\varphi$ and $\psi$ of $G$ and $H$ in $\mathbb{R}^2$, respectively, such that
\begin{itemize}
\item $G$ has more vertices than $H$,
\item the unbounded, connected component of $\mathbb{R}^2\setminus\varphi(G)$ coincides with the unbounded, connected component of $\mathbb{R}^2\setminus\psi(H)$, and
\item for all vertices $v$ of $G$ such that $\varphi(v)$ is adjacent to the unbounded, connected component of $\mathbb{R}^2\setminus\varphi(G)$ it holds that $\psi^{-1}(\varphi(v))$ is a vertex of $H$ of the same degree as $v$ and vice versa for $H$.
\end{itemize}
\end{definition}
\begin{example}
Several graph substitutions are given in \cref{f:20} and \crefrange{f:18}{f:13ca}. A posteriori, \crefrange{lem:tech1}{lem:tech3} explain where $G$ comes from. The corresponding $H$ was developed by hand for each $G$.
\end{example}
\begin{lemma}[Lifting Hamiltonian Cycles via Graph Substitutions]\label{lem:graphsubstitutions}
Let $G$ and $H$ be finite, simple, cubic, bipartite, planar and connected graphs. Fix embeddings $\varphi$ and $\psi$ of $G$ and $H$ in $\mathbb{R}^{2}$, respectively, and suppose that every face of each embedding is of size up to $8$. Suppose, furthermore, that there exists subgraphs $G'$ of $G$ and $H'$ of $H$ such that $G\setminus G'=H\setminus H'$, $\varphi_{\mid G\setminus G'}=\psi_{\mid H\setminus H'}$ and $(G',H',\varphi_{\mid G'},\psi_{\mid H'})$ is one of the graph substitutions given in \cref{f:20} and \crefrange{f:18}{f:13ca}. Then, any Hamiltonian cycle in $H$ can be changed in a neighbourhood of $H'$ in order to obtain a Hamiltonian cycle in $G$. In particular, if $H$ is Hamiltonian, then so is $G$.
\end{lemma}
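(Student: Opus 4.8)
The plan is to reduce the lemma to a finite statement about each of the listed graph substitutions — this is the announced case distinction — and then to glue along the common boundary. \emph{Set-up and interface.} Since $G\setminus G'=H\setminus H'$ with matching embeddings, write $E_{\mathrm{out}}$ for this common edge set, $E_{\mathrm{in}}^{G}:=E(G)\setminus E_{\mathrm{out}}$ and $E_{\mathrm{in}}^{H}:=E(H)\setminus E_{\mathrm{out}}$ for the edges ``inside'' $G'$ and $H'$, and $B$ for the set of vertices common to $G'$ and $G\setminus G'$. By the definition of a graph substitution these are the outer-boundary vertices of $G'$, they coincide with the corresponding vertices of $H'$ with matching degrees and positions, and (using cubicity and that the embeddings agree outside) each $v\in B$ has equally many incident edges in $E_{\mathrm{in}}^{G}$ as in $E_{\mathrm{in}}^{H}$; call a vertex of $G'$ or $H'$ not in $B$ \emph{interior}, noting that all three of its edges then lie in $E_{\mathrm{in}}^{G}$ resp.\ $E_{\mathrm{in}}^{H}$. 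Given a Hamiltonian cycle $C_{H}$ of $H$, split it into edge-disjoint parts $C_{H}=D_{H}\cup F$ with $D_{H}:=C_{H}\cap E_{\mathrm{in}}^{H}$ and $F:=C_{H}\cap E_{\mathrm{out}}$; since $C_{H}$ is $2$-regular and spanning, $D_{H}$ is a vertex-disjoint union of paths, each with at least one edge, covering every interior vertex of $H'$, with all endpoints in $B$, namely exactly at the vertices $v\in B$ having precisely one $C_{H}$-edge in $E_{\mathrm{in}}^{H}$ (and symmetrically for $F$ and $E_{\mathrm{out}}$). I would record the \emph{interface pattern} of $C_{H}$: the map $\iota_{H}\colon B\to\{0,1,2\}$ counting $C_{H}$-edges at each boundary vertex lying in $E_{\mathrm{in}}^{H}$, together with the matching $\lambda_{H}$ on $\iota_{H}^{-1}(1)$ pairing the two endpoints of each path component of $D_{H}$. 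Because $H'$ is one of finitely many fixed finite graphs, $D_{H}$ — and a fortiori the pair $(\iota_{H},\lambda_{H})$ — ranges over an explicitly enumerable finite set.

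\emph{The computational claim.} The heart of the proof is to verify, by computer, for each substitution in \cref{f:20} and \crefrange{f:18}{f:13ca}: for every subgraph $D$ of $H'$ that is a vertex-disjoint union of paths, each with at least one edge, covering all interior vertices of $H'$ and having all endpoints in $B$, there is a subgraph $D'$ of $G'$ with the same three properties relative to $G'$ and inducing on $B$ the same pair $(\iota,\lambda)$ as $D$. It suffices to check this for all such $D$ — one may shrink the search by pruning to matchings $\lambda$ that are non-crossing along the outer boundary, or to those $D$ that extend to a full Hamiltonian cycle, but neither is needed for correctness. This is a finite search, and it is what produces the $339\,068\,624$ cases.

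\emph{Gluing.} Granting the claim: from $C_{H}$ form $D_{H}=C_{H}\cap E_{\mathrm{in}}^{H}$, take $D_{G}\subseteq G'$ with the same interface pattern, and set $C_{G}:=D_{G}\cup F$ with $F=C_{H}\cap E_{\mathrm{out}}$. Then $C_{G}$ is $2$-regular — interior vertices of $G'$ have degree $2$ in $D_{G}$ (they are covered by $D_{G}$ and, having all endpoints of $D_{G}$ in $B$, are not endpoints); vertices outside $G'$ have degree $2$ in $F$; a vertex $v\in B$ has degree $\iota_{G}(v)$ from $D_{G}$ plus $2-\iota_{H}(v)$ from $F$, which equals $2$ since $\iota_{G}=\iota_{H}$ — and the same bookkeeping shows $C_{G}$ spans $V(G)$. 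For connectivity, model the cyclic structure of $C_{H}$ by the $2$-regular multigraph $\Gamma$ on the set of transition vertices $T:=\iota_{H}^{-1}(1)$ with one edge per path component of $F$ and one per path component of $D_{H}$, joining its two endpoints; then $C_{H}$ is a single cycle iff $\Gamma$ is. Passing from $D_{H}$ to $D_{G}$ leaves $T$ unchanged ($\iota_{G}=\iota_{H}$), leaves the $F$-edges of $\Gamma$ unchanged, and re-draws the $D$-edges along the same pairs ($\lambda_{G}=\lambda_{H}$), so the corresponding multigraph for $C_{G}$ is again $\Gamma$; hence $C_{G}$ is a single cycle. Finally $C_{G}$ agrees with $C_{H}$ on all of $E_{\mathrm{out}}$, so the modification is confined to $G'$ and $H'$, i.e.\ to a neighbourhood of $H'$, and $C_{G}$ is the desired Hamiltonian cycle of $G$.

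\emph{Main obstacle.} The conceptual steps above are routine bookkeeping; essentially everything rides on the case distinction — setting up the enumeration of interface patterns so that exhaustiveness is manifest, encoding the planar data (the cyclic order of edges at each boundary vertex and the inside/outside split) correctly for every one of the many substitution figures, and carrying out, and trusting, the $339\,068\,624$-case search for the completions $D'$ inside $G'$.
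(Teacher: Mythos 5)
Your set-up (interface pattern $(\iota,\lambda)$ at the boundary, gluing, and the multigraph argument for why matching patterns yield a single spanning cycle) is fine bookkeeping, but the finite claim you reduce everything to is not the one the paper verifies, and there is strong evidence it is simply false for several of the substitutions. You require that \emph{every} admissible path system $D\subseteq H'$ (equivalently, every entry/exit pattern of a Hamiltonian cycle through $H'$) be realisable inside $G'$ with the \emph{same} pattern, so that $C_G$ agrees with $C_H$ on all of $E_{\mathrm{out}}$. If that purely local statement held, the verification would amount to at most a few hundred interface patterns per figure, since $G'$ and $H'$ are small fixed graphs. The paper's computation instead runs through $339.068.624$ cases (over $272$ million for the substitution in \cref{f:16b} alone), precisely because the local lift fails for some patterns: whenever ``the substituted part with the fixed open ends is Hamiltonian and the original neighbourhood with the fixed open ends is not'', the algorithm does not stop but \emph{enlarges} the neighbourhood, attaching a $4$-, $6$- or $8$-face (the only possibilities, by the size-$8$ hypothesis on the ambient embedding) at a pair of adjacent open ends in both parts, and recurses. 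In other words, the case distinction necessarily extends into the surrounding graph, and the rerouting of the Hamiltonian cycle is confined only to a (possibly larger) \emph{neighbourhood} of $H'$ — which is exactly why the lemma is phrased that way rather than asserting agreement outside $H'$ itself. Your proposal has no mechanism for this situation: when your finite search returns ``no completion $D'$ in $G'$ with pattern $(\iota,\lambda)$'', your argument collapses, and you cannot conclude anything about $G$.

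So the gap is concrete: the step ``carry out, and trust, the search for completions $D'$ inside $G'$'' is not merely an obstacle of scale, it is a search that fails for some patterns, and the missing idea is the recursive neighbourhood enlargement that exploits the global hypothesis that every face of $\varphi$ and $\psi$ has size at most $8$ (so only three face types can abut the boundary), together with an argument (heuristic in the paper, affecting termination but not correctness) that this recursion terminates for each of the listed substitutions. A correct write-up along your lines would have to replace your single local claim by the weaker inductive statement ``for every entry/exit pattern, either the pattern is not realisable in the (enlarged) substituted part, or it is realisable in the (enlarged) original part'', quantified over all ways of padding both parts with $4$-, $6$- and $8$-faces, which is exactly the computation the paper performs.
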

\begin{proof}
The proof is computational. The purpose of the computer program is to check whether Hamiltonian cycles can be lifted via the graph substitutions. The source code and the graph substitutions can be found in \cite{My-Github-Repo}. The computer algebra system used is sage, see \cite{Sage}.

The algorithm works in the following way. The input is the part $G'$ of $G$ and the substituted part $H'$ of $H$ as given in Appendix \labelcref{sec:GraphSubstitutions} for each graph substitution. Here, the open ends of each part of a graph are given as vertices of degree $1$. It is necessary that the labels of the vertices of degree $1$ match those of the substituted neighbourhood. The algorithm returns a string stating whether Hamiltonian cycles can be lifted via the given graph substitution and, if so, how many cases it took to check the given graph substitution. These numbers of cases can be found in \cref{t:NumberOfCases}. After a short piece of starting-code, the algorithm works recursively.

It takes a list of open ends, i.e. of labels of vertices of degree $1$ and iterates over every possibility for a Hamiltonian cycle to enter and exit the part of the graph. To do this, the algorithm takes a copy of the part of the graph and removes all degree-$1$-vertices, whose corresponding open ends shall not be contained in the Hamiltonian cycle, that is, the Hamiltonian cycle does not pass through this edge. The others are pairwise connected, each with one further vertex in between, forcing any Hamiltonian cycle to pass through these pairs of edges. This is done in order to check whether this neighbourhood can be part of a Hamiltonian graph, such that a Hamiltonian cycle of this graph passes through precisely the selected open ends of the neighbourhood. The same is done for the part of the substituted graph.

Using sagemath's build-in methods, the algorithm then checks, whether these parts are Hamiltonian or not. That is, does a Hamiltonian cycle on the graph exist, that enters and exits the neighbourhood, respectively the substituted neighbourhood, in precisely the selected open ends? Or can the existence of such a Hamiltonian cycle be ruled out?

If the substituted part with the fixed open ends is Hamiltonian and the original neighbourhood with the fixed open ends is not, the algorithm chooses two adjacent open ends of the graphs and calls the recursion step again with a face of size $4$, a face of size $6$ and a face of size $8$ placed at the position of the two chosen adjacent open ends in both neighbourhoods. In this way, the recursion step is called again with an enlarged neighbourhood, in the hope that it works this time, i.e. the new substituted part with the open ends is not Hamiltonian or the new original part with the fixed open ends is Hamiltonian. Otherwise, the recursion is called again, et cetera.

The place where to attach the new face to the neighbourhood is clearly mathematically irrelevant, however, the number of cases to distinguish varies a lot from a small finite number over large finite numbers up to infinity, including possibly infinite descents.

Therefore, we have tested and implemented heuristic methods to choose this position. In the end, we used two different heuristics depending on the graph substitution, see the GitHub-page \cite{My-Github-Repo} for further details.

If the original part had not been Hamiltonian and did not have any open ends, the algorithm would have called an error saying that Barnette's \cref{conj:Barnette} is false, as this original graph would then be a counterexample. Luckily, this error was never called during the calculation.

If, on the contrary, the substituted part is not Hamiltonian or the original part is Hamiltonian, this substitution worked in this case, that is every Hamiltonian cycle on $H$ with the given neighbourhoods of $H'$ and $G'$ and the given conditions on where the Hamiltonian cycle enters and exits the neighbourhood, respectively the substituted neighbourhood, lifts from $H$ to $G$. In this case, the case-counter is increased by one.

The rest of the source code is mainly made up by additional case checkers in order to make the code run faster.
\end{proof}
\begin{table}[htp]
\centering
\begin{tabular}{l|r}
Substitution&Number of Cases\\\hline
\cref{f:20}&$50$\\
\cref{f:18}&$374.704$\\
\cref{f:22}&$8$\\
\cref{f:23b}&$3$\\
\cref{f:25ab}&$1$\\
\cref{f:25aa}&$8$\\
\cref{f:16c}&$322$\\
\cref{f:16b}&$272.791.991$\\
\cref{f:14cb}&$20$\\
\cref{f:13cb}&$126$\\
\cref{f:14b}&$322$\\
\cref{f:1b}&$50$\\
\cref{f:1cbc}&$1$\\
\cref{f:1cbbb}&$1$\\
\cref{f:1cbba}&$1.964$\\
\cref{f:1cbab}&$8$\\
\cref{f:1cbaa}&$468.362$\\
\cref{f:1ca}&$1.964$\\
\cref{f:subtestexample}&$20.319$\\
\cref{f:2}&$65.392.683$\\
\cref{f:13b}&$1.842$\\
\cref{f:1a}&$322$\\
\cref{f:14a}&$1.871$\\
\cref{f:13a}&$10.848$\\
\cref{f:13ca}&$834$\\\hline
Sum&$339.068.624$
\end{tabular}
\caption{Number of cases the calculation took per substitution}
\label{t:NumberOfCases}
\end{table}
\section{Suitable Graph Substitutions}
\crefrange{lem:tech1}{lem:tech3} are technical and fragment the proof of the main \cref{thm-maintheorem}, making it hopefully easier to understand. The following \cref{def:n-face} is used to make the notation shorter and will be used in proofs only.
\begin{definition}[$n$-Face]\label{def:n-face}
We call a face of an embedding of a planar graph $G$ a \emph{$n$-face} for some integer $n\in\mathbb{N}$, if it is of size $n$.
\end{definition}
\begin{lemma}\label{lem:tech1}
Let $G$ be a finite, simple graph that is cubic, bipartite, planar and connected. Fix an embedding $\varphi$ of $G$ in $\mathbb{R}^{2}$ and suppose that every face of the embedding is of size up to $8$. Suppose, furthermore, that there exists a face of size $4$ that is neither adjacent to any face of size $8$ nor adjacent to four faces of size $4$.
	
Then, there exists a finite, simple, cubic, bipartite, planar and connected graph $H$ together with an embedding $\psi$ of $H$ in $\mathbb{R}^{2}$ and subgraphs $G'$ of $G$ and $H'$ of $H$ such that $G\setminus G'=H\setminus H'$, $\varphi_{\mid G\setminus G'}=\psi_{\mid H\setminus H'}$ and $(G',H',\varphi_{\mid G'},\psi_{\mid H'})$ is one of the graph substitutions given in \cref{f:20}, in \crefrange{f:18}{f:25aa} and in \cref{f:14cb}.
\end{lemma}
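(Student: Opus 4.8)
The plan is to fix the distinguished $4$-face and carry out a finite case distinction on the local structure of the embedding around it, producing in each case an explicit graph substitution from the seven drawn in \cref{f:20}, \crefrange{f:18}{f:25aa} and \cref{f:14cb}. Observe that \emph{lifting} Hamiltonian cycles is not part of what has to be proved here — that is precisely the content of \cref{lem:graphsubstitutions} — so the whole task is to exhibit a matching substitution and to check that the resulting graph $H$ is again finite, simple, cubic, bipartite, planar, connected and has all faces of size at most $8$.

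First I would set up notation. Let $f$ be a $4$-face as in the hypothesis, with boundary cycle $v_1v_2v_3v_4$; since $G$ is cubic, each $v_i$ carries exactly one further edge, the face $F_i$ bordering $f$ along $v_iv_{i+1}$ (indices modulo $4$) is well defined, and $F_1,\dots,F_4$ are pairwise distinct because $f$ has size $4$. As $G$ is cubic and connected it is $2$-connected (\cref{rem-con->2-conn}), hence a bipartite $2$-connected plane graph, so every face is bounded by an even cycle of length at least $4$; together with the hypotheses this forces $|F_i|\in\{4,6\}$ for all $i$ and $|F_i|=6$ for at least one $i$. Up to the dihedral symmetry of the embedded $4$-face, the cyclic word $(|F_1|,|F_2|,|F_3|,|F_4|)$ is therefore one of $(6,6,6,6)$, $(6,6,6,4)$, $(6,6,4,4)$, $(6,4,6,4)$, $(6,4,4,4)$. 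For each of these I would take $G'$ to be the subgraph of $G$ formed by $f$, the four edges leaving its vertices, and exactly as much of the neighbouring $4$- and $6$-faces as the corresponding figure requires, the boundary vertices being recorded as degree-$1$ open ends. In the configurations where this data does not yet pin down the picture — essentially those in which a neighbouring $6$-face is still ``free'', such as $(6,4,4,4)$ — a secondary split according to how that $6$-face attaches to the rest of $G$ refines the list to the cases drawn in the seven figures. In each case I would read off $H'$ and $\psi_{\mid H'}$ from the matching figure and let $H$ be $G$ with $G'$ replaced by $H'$ (legitimate because the boundaries of $G'$ and $H'$ agree vertex by vertex and in degree); then $G\setminus G'=H\setminus H'$ and $\varphi_{\mid G\setminus G'}=\psi_{\mid H\setminus H'}$ hold by construction, and $(G',H',\varphi_{\mid G'},\psi_{\mid H'})$ is, by inspection, one of the listed graph substitutions.

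It then remains to verify that each such $H$ again satisfies the hypotheses. Finiteness, planarity and connectedness are immediate; $H$ is cubic because a graph substitution preserves the degrees of the open ends while $H'$ is cubic away from its open ends; $H$ is simple by a case-by-case check that no edge of $H'$ becomes a loop or a parallel edge after gluing onto $G\setminus G'$; $H$ is bipartite because every face of $H$ is even, the faces interior to $H'$ being even by inspection and each face meeting the boundary merely having an even-length arc exchanged for an even-length arc; and every face of $H$ has size at most $8$ because this too is a local condition — interior faces of $H'$ satisfy it by inspection of the figure, and a boundary face can grow only when its $G$-counterpart is one of the adjacent $6$-faces, which then grows only to size $8$ since $f$ is adjacent to no $8$-face. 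The hypothesis that $f$ is not adjacent to four $4$-faces excludes the most symmetric local picture from the outset, the one for which no substitution from this particular list applies; both excluded situations are instead treated by the remaining technical lemmas. The main obstacle, and the reason the figures take their precise shapes, is exactly this last bundle of verifications taken together: one must be certain that the displayed configurations are genuinely exhaustive under the two exclusion hypotheses, that in each of them at least one of the seven substitutions applies, and that it creates neither an oversized face nor a multiple edge — which is what forces both the secondary case splits and the exact extent of $G'$ selected in each figure.
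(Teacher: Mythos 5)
Your proposal takes essentially the same route as the paper: its proof of \cref{lem:tech1} is exactly a case distinction on the number and arrangement of $6$-faces around the chosen $4$-face (organised as the flowchart in \cref{f:4NotAdjacentToAny8Face}), with secondary splits in the under-determined configurations, matching each case to one of the seven listed substitutions and noting that $H$ inherits all required properties. Your five cyclic patterns $(6,6,6,6),\dots,(6,4,4,4)$ together with the secondary refinements reproduce that flowchart, so the argument coincides with the paper's.
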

\begin{proof}
We change $G$ locally in a neighbourhood of the $4$-face in order to obtain $H$. More precisely, we consider a suitable graph substitution according to \cref{f:4NotAdjacentToAny8Face} and obtain $H$ from $G$ by replacing the neighbourhood by its substitution. Since $G$ is bipartite, every face of the embedding of $G$ is of size $4$, $6$ or $8$. We note that $H$ must be finite, simple, cubic, bipartite, planar and connected.
	
We note, that in some of the referred Figures, there are more faces drawn than stated in the flowchart in \cref{f:4NotAdjacentToAny8Face}. This is because there is no choice for the remaining faces, so there is no restriction made in the proof. For instance, suppose there is precisely one $6$-face adjacent to the $4$-face and there is another $6$-face that is adjacent to the $6$-face on the opposite side of the $4$-face. Since there is no $8$-face adjacent to the $4$-face, the other three faces that are adjacent to the $4$-face are $4$-faces as well. In this case, the flowchart refers to \cref{f:25ab}. However, despite the $4$-face, the two $6$-faces and the three neighbouring $4$-faces, \cref{f:25ab} also shows an $8$-face. This is because there exists another $n$-face that is adjacent to the three neighbouring $4$-faces. By construction, this $n$-face is adjacent to at least $8$ vertices. Therefore, this $n$-face can neither be a $4$- nor a $6$-face and, thus, is forced to be an $8$-face.
\begin{figure}[ht]
\centering
\begin{tikzpicture}
\node(d1)[draw, rectangle] at (0,0) {How many $6$-faces are adjacent to the $4$-face?};
\node(s1)[draw, rounded rectangle] at (5,-2) {See \cref{f:18}};
\node(s2)[draw, rounded rectangle] at (2,-2) {See \cref{f:20}};
\node(d2)[draw, rectangle] at (-2.5,-2) {Are the two $6$-faces adjacent?};
\node(s3)[draw, rounded rectangle] at (5,-4) {See \cref{f:22}};
\node(d3)[draw, rectangle,align=center] at (-2.5,-4) {Thus, the $4$-face is adjacent to\\two neighbouring $4$-faces. What\\other $n$-face is adjacent to those?};
\node(s4)[draw, rounded rectangle] at (-1,-6) {See \cref{f:14cb}};
\node(s5)[draw, rounded rectangle] at (-4,-6) {See \cref{f:23b}};
\node(d4)[draw, rectangle, align=center] at (0,-8) {What $n$-face is adjacent to the $6$-face\\on the opposite side of the $4$-face?\\It cannot be a $4$-face as $G$ is cubic\\and every face is a $4$-, $6$- or $8$-face.};
\node(s6)[draw, rounded rectangle] at (2,-6) {See \cref{f:25ab}};
\node(s7)[draw, rounded rectangle] at (5,-6) {See \cref{f:25aa}};
\coordinate(p1) at (-6,0);
\coordinate(p2) at (-6,-8);
\draw[-to] (d1) -- (s1) node[pos=0.7,above]{$4$};
\draw[-to] (d1) -- (s2) node[pos=0.7,above]{$3$};
\draw[-to] (d1) -- (d2) node[pos=0.7,above]{$2$};
\draw[-to] (d2) -- (s3) node[pos=0.5,below]{No};
\draw[-to] (d2) -- (d3) node[pos=0.5,right]{Yes};
\draw[-to] (d3) -- (s4) node[pos=0.5,right]{$8$};
\draw[-to] (d3) -- (s5) node[pos=0.5,left]{$6$};
\draw[-] (d1) -- (p1) node[pos=0.5,below]{$1$};
\draw[-] (p1) -- (p2);
\draw[-to] (p2) -- (d4);
\draw[-to] (d4) -- (s6) node[pos=0.9,below]{$6$};
\draw[-to] (d4) -- (s7) node[pos=0.8,below]{$8$};
\end{tikzpicture}
\caption{Substitution for a $4$-face that is not adjacent to any $8$-face}
\label{f:4NotAdjacentToAny8Face}
\end{figure}
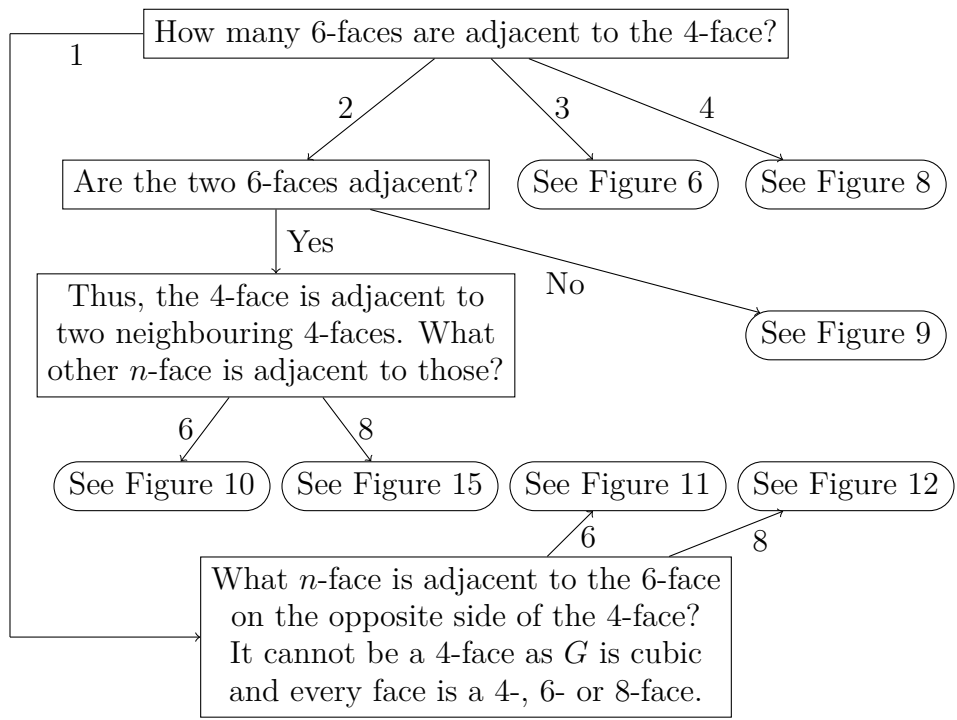
\end{proof}
\begin{lemma}\label{lem:tech2}
Let $G$ be a finite, simple graph that is cubic, bipartite, planar and connected. Fix an embedding $\varphi$ of $G$ in $\mathbb{R}^{2}$ and suppose that every face of the embedding is of size up to $8$. Suppose, furthermore, that there exists a face of size $4$ that is adjacent to precisely one face of size $8$.

Then, there exists a finite, simple, cubic, bipartite, planar and connected graph $H$ together with an embedding $\psi$ of $H$ in $\mathbb{R}^{2}$ and subgraphs $G'$ of $G$ and $H'$ of $H$ such that $G\setminus G'=H\setminus H'$, $\varphi_{\mid G\setminus G'}=\psi_{\mid H\setminus H'}$ and $(G',H',\varphi_{\mid G'},\psi_{\mid H'})$ is one of the graph substitutions given in \crefrange{f:16c}{f:1ca}.
\end{lemma}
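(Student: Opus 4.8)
The plan is to follow the same template as in the proof of \cref{lem:tech1}. We will modify $G$ only inside a bounded neighbourhood of the prescribed $4$-face, replacing that neighbourhood $G'$ by the part $H'$ of a suitable graph substitution chosen from \crefrange{f:16c}{f:1ca}, keep everything else untouched so that $G\setminus G' = H\setminus H'$ with matching embeddings, and then invoke \cref{lem:graphsubstitutions} to lift a Hamiltonian cycle from $H$ to $G$. Since $G$ is bipartite, every face has size $4$, $6$ or $8$; as the distinguished $4$-face is incident to exactly one $8$-face $A$, the three remaining faces incident to it are each a $4$-face or a $6$-face, occupying the three other positions around the $4$-face (two of them adjacent, in the cyclic order of faces around the $4$-face, to $A$, and one opposite $A$).

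First I would set up a flowchart, analogous to \cref{f:4NotAdjacentToAny8Face}, whose root branches on how many of these three faces are $6$-faces (namely $0$, $1$, $2$ or $3$), with further branching on their positions relative to $A$, on the adjacencies among them (for instance, whether two of them share an edge), and, where a branch does not yet determine a substitution, on the sizes of the faces one step further out. Each leaf of the flowchart fixes a concrete local configuration — the $4$-face, the $8$-face $A$, and the forced portion of their surroundings — which I would match to one of the $G'$ pictured in the appendix. As in \cref{lem:tech1}, several of the referenced figures will show more faces than the flowchart prescribes; those extra faces are not choices but consequences, since a face incident to a run of small faces meets enough vertices that bipartiteness together with the hypothesis that no face exceeds size $8$ pins down its size uniquely. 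For each leaf I would also check the routine point that $H$ is again finite, simple, cubic (automatic, as graph substitutions are degree-preserving on the boundary), bipartite, planar and connected, so that \cref{lem:graphsubstitutions} applies.

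The hard part is twofold. First, establishing that the flowchart is exhaustive: this needs careful bookkeeping of the cyclic order of faces around the $4$-face and of the incidences among the small faces surrounding it, so that no local configuration escapes a leaf — and, in the branches that look outward, an argument that the process terminates because a newly encountered face adjacent to several consecutive small faces cannot be too small. Second, the existence of a working substitution in each leaf is exactly the computational statement of \cref{lem:graphsubstitutions}; the mathematical work of this lemma is therefore to exhibit the finite, provably complete flowchart and the hand-built $H'$ at each leaf, while the verification that each such $H'$ actually lifts Hamiltonian cycles is left to the computer.
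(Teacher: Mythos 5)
Your plan is essentially the paper's own proof: the paper likewise changes $G$ only in a neighbourhood of the distinguished $4$-face and organises the choice of substitution as a flowchart case distinction on the local configuration (rooted at how many further $4$-faces meet the two vertices shared by the $4$-face and the $8$-face, then branching outward exactly in the spirit you describe), assigning to each leaf one of the substitutions in \crefrange{f:16c}{f:1ca}, noting that the extra faces drawn in the figures are forced by bipartiteness and the size-$8$ bound, and deferring the lifting of Hamiltonian cycles to \cref{lem:graphsubstitutions}. The explicit exhaustive flowchart and leaf-by-leaf matching that you leave as a to-do is supplied by the paper in \cref{f:4AdjacentTo8Face}.
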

\begin{proof}
Again, we change $G$ locally in a neighbourhood of the $4$-face in order to obtain $H$. More precisely, we consider a suitable graph substitution according to \cref{f:4AdjacentTo8Face} and obtain $H$ from $G$ by replacing the neighbourhood by its substitution. Since $G$ is bipartite, every face of the embedding of $G$ is a $4$-, $6$- or $8$-face. We note that $H$ must be finite, simple, cubic, bipartite, planar and connected.
\begin{figure}[hpt]
\centering
\begin{tikzpicture}
\node(d1)[draw, rectangle, align=center] at (0,0) {The $4$-face and the $8$-face meet in two vertices. How\\many other $4$-faces are adjacent to one of these vertices?};
\node(d2)[draw, rectangle, align=center] at (0,-2) {What $n$-face is adjacent to the $4$-face\\on the opposite side of the $8$-face?};
\node(s1)[draw, rounded rectangle] at (-5.5,-2) {See \cref{f:16c}};
\node(s2)[draw, rounded rectangle] at (5.5,-2) {See \cref{f:16b}};
\node(d4)[draw, rectangle, align=center] at (0,-4) {What $n$-face is adjacent to the $4$-face\\on the opposite side of the $8$-face?};
\node(d5)[draw, rectangle, align=center] at (0,-6) {By assumption, this $4$-face is adjacent\\to two $4$-faces and one $6$-face. What\\further $n$-face is is adjacent to?};
\node(s3)[draw, rounded rectangle] at (-5.5,-6) {See \cref{f:14cb}};
\node(s4)[draw, rounded rectangle] at (5.5,-6) {See \cref{f:13cb}};
\node(s5)[draw, rounded rectangle] at (-5.5,-4) {See \cref{f:14b}};
\node(d3)[draw, rectangle, align=center] at (0,-8) {Does there exist a $6$-face\\adjacent to the first $4$-face?};
\node(s6)[draw, rounded rectangle] at (5.5,-8) {See \cref{f:1b}};
\node(d6)[draw, rectangle, align=center] at (0,-10) {The $n$-face that is adjacent to all faces\\that are adjacent to the first $4$-face has\\at least $6$ edges. How many are there?};
\node(s7)[draw, rounded rectangle] at (5.5,-10) {See \cref{f:1ca}};
\node(d7)[draw, rectangle, align=center] at (0,-12) {What $n$-face is adjacent to the $8$-face\\on the opposite side of the first $4$-face?};
\node(s8)[draw, rounded rectangle] at (-5.5,-10) {See \cref{f:1cbc}};
\node(d8)[draw, rectangle, align=center] at (-3.3,-14.25) {The only $n$-face that is adjacent\\to this $6$-face and the $8$-face has at\\least $6$ edges. How many are there?};
\node(d9)[draw, rectangle, align=center] at (3.3,-14.25) {The only $n$-face that is adjacent\\to the two $8$-faces has at least\\$6$ edges. How many are there?};
\node(s9)[draw, rounded rectangle] at (-4.5,-16.25) {See \cref{f:1cbbb}};
\node(s10)[draw, rounded rectangle] at (-1.5,-16.25) {See \cref{f:1cbba}};
\node(s11)[draw, rounded rectangle] at (1.5,-16.25) {See \cref{f:1cbab}};
\node(s12)[draw, rounded rectangle] at (4.5,-16.25) {See \cref{f:1cbaa}};
\coordinate(p1) at (7.25,0);
\coordinate(p2) at (7.25,-4);
\coordinate(p3) at (-7.25,0);
\coordinate(p4) at (-7.25,-8);
\coordinate(p5) at (-7.25,-12);
\coordinate(p7) at (-7.25,-10);
\draw[-to] (d1) -- (d2) node[pos=0.5,left]{$0$};
\draw[-to] (d2) -- (s1) node[pos=0.5,below]{$4$};
\draw[-to] (d2) -- (s2) node[pos=0.5,below]{$6$};
\draw[-] (d1) -- (p1) node[pos=0.5,above]{$1$};
\draw[-] (p1) -- (p2);
\draw[-to] (p2) -- (d4);
\draw[-] (p3) -- (p4);
\draw[-to] (p4) -- (d3);
\draw[-] (d1) -- (p3) node[pos=0.5,above]{$2$};
\draw[-to] (d4) -- (d5) node[pos=0.5,left]{$4$};
\draw[-to] (d5) -- (s3) node[pos=0.5,below]{$6$};
\draw[-to] (d5) -- (s4) node[pos=0.5,below]{$8$};
\draw[-to] (d4) -- (s5) node[pos=0.5,below]{$6$};
\draw[-to] (d3) -- (s6) node[pos=0.5,below]{Yes};
\draw[-to] (d3) -- (d6) node[pos=0.5,left]{No};
\draw[-to] (d6) -- (s7) node[pos=0.5,below]{$8$};
\draw[-to] (d6) -- (d7) node[pos=0.5,left]{$6$};
\draw[-to] (d7) -- (d8) node[pos=0.7,above]{$6$};
\draw[-to] (d7) -- (d9) node[pos=0.7,above]{$8$};
\draw[-] (d7) -- (p5) node[pos=0.5,below]{$4$};
\draw[-] (p5) -- (p7);
\draw[-to] (p7) -- (s8);
\draw[-to] (d8) -- (s9) node[pos=0.5,right]{$6$};
\draw[-to] (d8) -- (s10) node[pos=0.5,left]{$8$};
\draw[-to] (d9) -- (s11) node[pos=0.5,right]{$6$};
\draw[-to] (d9) -- (s12) node[pos=0.5,left]{$8$};
\end{tikzpicture}
\caption{Substitution for a $4$-face that is adjacent to precisely one $8$-face}
\label{f:4AdjacentTo8Face}
\end{figure}
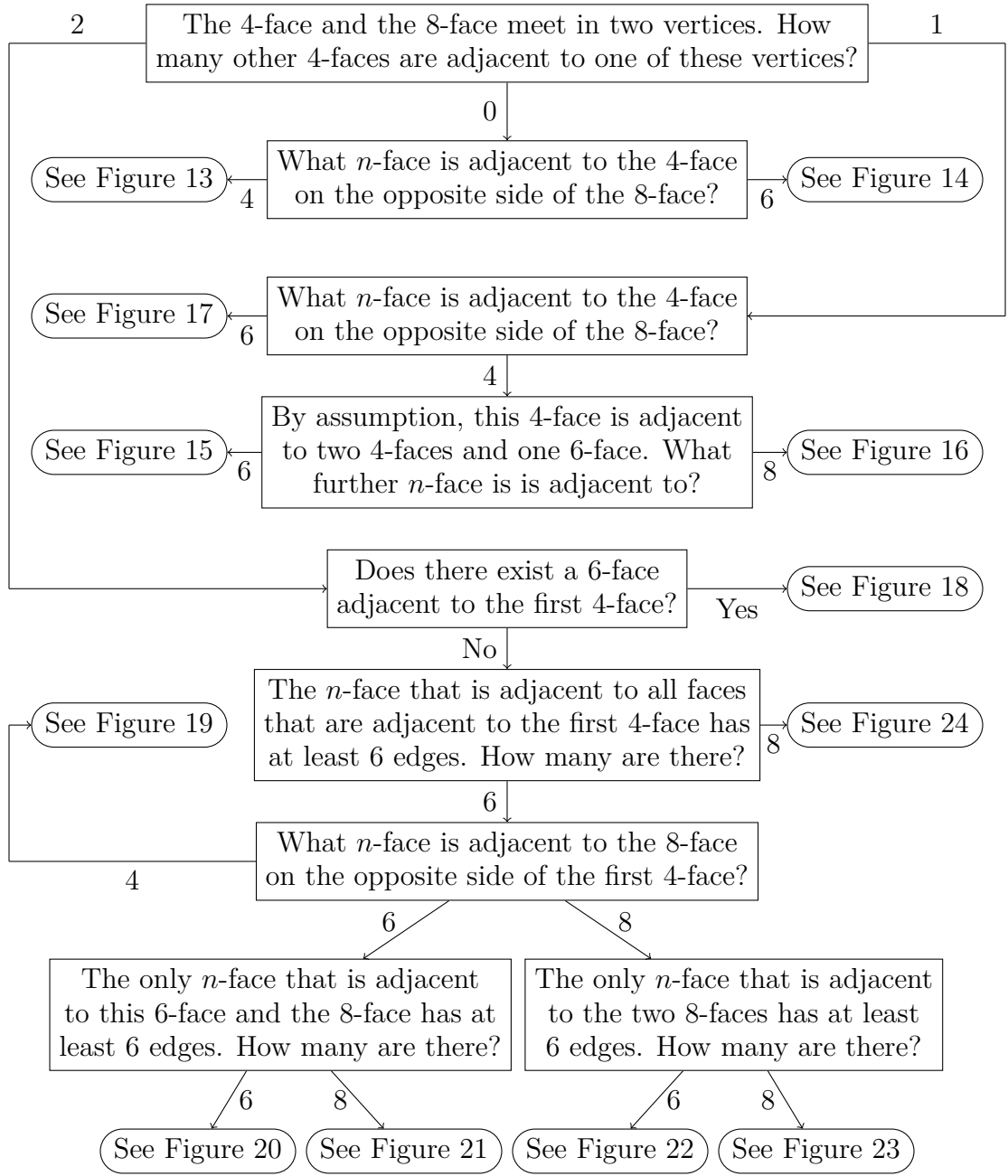
\end{proof}
\begin{lemma}\label{lem:tech3}
Let $G$ be a finite, simple graph that is cubic, bipartite, planar and connected. Fix an embedding $\varphi$ of $G$ in $\mathbb{R}^{2}$ and suppose that every face of the embedding is of size up to $8$. Suppose, furthermore, that there exists a face of size $8$ that is adjacent to three or more faces of size $4$.

Then, there exists a finite, simple, cubic, bipartite, planar and connected graph $H$ together with an embedding $\psi$ of $H$ in $\mathbb{R}^{2}$ and subgraphs $G'$ of $G$ and $H'$ of $H$ such that $G\setminus G'=H\setminus H'$, $\varphi_{\mid G\setminus G'}=\psi_{\mid H\setminus H'}$ and $(G',H',\varphi_{\mid G'},\psi_{\mid H'})$ is one of the graph substitutions given in \crefrange{f:14cb}{f:13ca}.
\end{lemma}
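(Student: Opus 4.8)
The plan is to follow the pattern of the proofs of \cref{lem:tech1,lem:tech2}: we alter $G$ only in a neighbourhood of the given $8$-face, replacing that neighbourhood by a suitable graph substitution from \crefrange{f:14cb}{f:13ca} in order to obtain $H$. Since $G$ is bipartite, every face of $\varphi$ is a $4$-, $6$- or $8$-face; the substitutions in the list were designed by hand so that, glued into any such $G$, the resulting $H$ is again finite, simple, cubic, bipartite, planar and connected (and, as in the proof of \cref{lem:tech1}, respects the face-size bound), which is all the conclusion requires.

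First I would fix notation, writing the $8$-face as an octagon $v_1v_2\cdots v_8$ and recording, for each of its eight bounding edges, whether the face on the other side is a $4$-face. The case distinction is then organised as a flowchart. It branches first on the cyclic pattern of $4$-faces around the octagon: how many of the eight edges border a $4$-face (at least three, by hypothesis) and how those $4$-faces are distributed — in particular whether two of them sit on consecutive edges of the octagon (and so share a vertex and a further edge) or are separated by $6$- or $8$-faces. Within each such pattern the flowchart branches further on the faces of the next ring: what lies opposite each $4$-face across its far edge, what lies opposite the intervening $6$-faces, and so on, exactly as in \cref{f:4NotAdjacentToAny8Face,f:4AdjacentTo8Face}. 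Each leaf of the flowchart names one of \crefrange{f:14cb}{f:13ca}. As in the proof of \cref{lem:tech1}, several of those figures draw more faces than the corresponding leaf constrains; those extra faces are forced, since a face that must border a prescribed chain of already-placed faces is incident to enough vertices that it can be neither a $4$- nor a $6$-face.

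Granting that the flowchart is exhaustive, the lemma follows at once: in the configuration at hand we read off the indicated substitution $(G',H',\varphi_{\mid G'},\psi_{\mid H'})$ and take $H$ to be $(G\setminus G')\cup H'$ with the induced embedding; it has strictly fewer vertices than $G$, since $G'$ has more vertices than $H'$ by definition of a graph substitution. That Hamiltonicity actually transfers from $H$ back to $G$ is not needed here — that is the separate content of \cref{lem:graphsubstitutions}.

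The main obstacle is the completeness and termination of the case analysis. One must check that every way in which an $8$-face can be adjacent to three or more $4$-faces in a cubic, bipartite, planar graph with faces of size at most $8$ falls under some branch of the flowchart, and — the more delicate point — that for each branch a genuine graph substitution (with $H'$ strictly smaller than $G'$, respecting bipartiteness, cubicity and the face-size bound) can actually be exhibited, so that the reduction in the main \cref{thm-maintheorem} terminates. The branches in which the $4$-faces cluster together, forcing long chains of $8$-faces to surround them as in \cref{f:1cbaa,f:2}, are the ones where the hand-crafted $H'$ is hardest to produce and where the number of sub-cases that \cref{lem:graphsubstitutions} must verify — and hence the entries of \cref{t:NumberOfCases} — blows up.
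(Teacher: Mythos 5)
Your overall strategy is exactly the paper's: modify $G$ only in a neighbourhood of the given $8$-face, organise a case distinction on how the $4$-faces sit around it and on the sizes of the faces in the next ring, and let each case name one of the substitutions in \crefrange{f:14cb}{f:13ca}, with extra faces in the figures being forced for parity/size reasons; transfer of Hamiltonicity is indeed deferred to \cref{lem:graphsubstitutions}. The problem is that your proposal never actually carries out this case distinction. You write ``granting that the flowchart is exhaustive, the lemma follows at once'' and then list exhaustiveness and the existence of a suitable substitution in each branch as ``the main obstacle'' --- but that obstacle \emph{is} the content of the lemma. A description of what a flowchart would branch on, without exhibiting the branches and verifying that every admissible configuration lands on one of the finitely many listed figures, is not a proof; in particular nothing in your text rules out a configuration of $4$-faces around the $8$-face that matches none of \crefrange{f:14cb}{f:13ca}.

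For comparison, the paper closes this gap with a short, economical analysis that never needs the full cyclic pattern of all eight boundary edges (which your plan suggests enumerating, and which would balloon the number of cases). It splits on whether some two of the $4$-faces adjacent to the $8$-face are themselves adjacent. If not, the pigeonhole principle (three or more $4$-faces on eight edges, pairwise non-adjacent) yields two $4$-faces meeting the $8$-face at edges separated by a single edge of the octagon; the face across that middle edge is then a $6$-face (\cref{f:subtestexample}) or an $8$-face (\cref{f:2}). If two adjacent $4$-faces exist, one looks at the size $k$ of the face adjacent to both: for $k\ne4$ the size $l$ of a further face adjacent to one of the $4$-faces selects one of the six cases in \cref{t:Two4AdjacentTo8FaceNo4OnOtherSide}; for $k=4$ one examines the two faces adjacent to the $8$-face and to one of the two $4$-faces, giving \cref{f:14cb}, \cref{f:13cb} or \cref{f:13ca} when neither is a $4$-face, and otherwise the flowchart of \cref{f:Subst8FaceWithThree4FacesInRowAndAnother4FaceBehindThem} (three $4$-faces in a row plus a $4$-face behind them), whose leaves are \cref{f:1ca}, \cref{f:1cbc}, \cref{f:1cbbb}, \cref{f:1cbba}, \cref{f:1cbab} and \cref{f:1cbaa}. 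Some such explicit, verifiably exhaustive reduction is what your write-up still needs.
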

\begin{proof}
Again, we change $G$ locally in a neighbourhood of the $8$-face in order to obtain $H$. More precisely, we consider a suitable graph substitution according to the following. In all cases, we obtain $H$ from $G$ by replacing the neighbourhood by its substitution and we note that $H$ must be finite, simple, cubic, bipartite, planar and connected.

Since $G$ is bipartite, every face of the embedding of $G$ is a $4$-, $6$- or $8$-face. First, we suppose that every two $4$-faces adjacent to the $8$-face are not adjacent. By the box principle, there exist two $4$-faces that are adjacent to the $8$-face at edges that are separated by one other edge of the $8$-face only. By assumption, the other face adjacent to this edge in the middle cannot be a $4$-face. If it is a $6$-face, we follow \cref{f:subtestexample}. Otherwise, it will be an $8$-face and we follow \cref{f:2}.

Now, we suppose that there exist two adjacent $4$-faces that are adjacent to the $8$-face. Let $k\in\{4,6,8\}$ be such that the other face that is adjacent to both $4$-faces is a $k$-face. First, we suppose $k\ne4$. Let $l\in\{4,6,8\}$ be such that one of the $4$-faces is furthermore adjacent to an $l$-face. We consider graph substitutions as presented in \cref{t:Two4AdjacentTo8FaceNo4OnOtherSide}.
\begin{table}[ht]
\centering
\begin{tabular}{c||c|c|c}
&$l=4$&$l=6$&$l=8$\\\hline\hline
$k=6$&See \cref{f:1b}&See \cref{f:14b}&See \cref{f:13b}\\\hline
$k=8$&See \cref{f:1a}&See \cref{f:14a}&See \cref{f:13a}
\end{tabular}
\caption{Substitution for an $8$-face that is adjacent to two adjacent $4$-faces which are not adjacent to a common $4$-face}
\label{t:Two4AdjacentTo8FaceNo4OnOtherSide}
\end{table}

Now, we consider $k=4$. There are precisely two faces that are adjacent to the $8$-face and to one of the two $4$-faces that also are adjacent to the $8$-face. First, we assume that both of them are not a $4$-face. If both of them are $6$-faces, we consider the graph substitution given in \cref{f:14cb}. If only one of them is a $6$-face, the other one is an $8$-face and we follow the graph substitution presented in \cref{f:13cb}. Otherwise, both of them are $8$-faces and we consider the graph substitution given in \cref{f:13ca}. Finally, we assume that at least one of them is a $4$-face. We follow \cref{f:Subst8FaceWithThree4FacesInRowAndAnother4FaceBehindThem} to find a graph substitution.
\begin{figure}[ht]
\centering
\begin{tikzpicture}
\node(d6)[draw, rectangle, align=center] at (0,-0.5) {By assumption, three $4$-faces in a row are adjacent to the $8$-face.\\Thus, there exists one other $n$-face which is adjacent to the\\two marginal $4$-faces. Since this $n$-face is adjacent to the $8$-face\\in two edges and to three $4$-faces, we find $n\ne4$. What is $n$?};
\node(s7)[draw, rounded rectangle] at (3,-2.5) {See \cref{f:1ca}};
\node(d7)[draw, rectangle, align=center] at (0,-4) {What $n$-face is adjacent to the $8$-face\\on the opposite side of the central $4$-face?};
\node(s8)[draw, rounded rectangle] at (-3,-2.5) {See \cref{f:1cbc}};
\node(d8)[draw, rectangle, align=center] at (-3.25,-6.25) {The $n$-face that is adjacent to\\this $6$-face and the $8$-face has at\\least $6$ edges. How many are there?};
\node(d9)[draw, rectangle, align=center] at (3.25,-6.25) {The $n$-face that is adjacent to\\the two $8$-faces has at least\\$6$ edges. How many are there?};
\node(s9)[draw, rounded rectangle] at (-4.5,-8.25) {See \cref{f:1cbbb}};
\node(s10)[draw, rounded rectangle] at (-1.5,-8.25) {See \cref{f:1cbba}};
\node(s11)[draw, rounded rectangle] at (1.5,-8.25) {See \cref{f:1cbab}};
\node(s12)[draw, rounded rectangle] at (4.5,-8.25) {See \cref{f:1cbaa}};
\coordinate(p5) at (-5,-4);
\coordinate(p7) at (-5,-2.5);
\draw[-to] (d6) -- (s7) node[pos=0.15,below]{$8$};
\draw[-to] (d6) -- (d7) node[pos=0.5,left]{$6$};
\draw[-to] (d7) -- (d8) node[pos=0.7,above]{$6$};
\draw[-to] (d7) -- (d9) node[pos=0.7,above]{$8$};
\draw[-] (d7) -- (p5) node[pos=0.5,below]{$4$};
\draw[-] (p5) -- (p7);
\draw[-to] (p7) -- (s8);
\draw[-to] (d8) -- (s9) node[pos=0.5,right]{$6$};
\draw[-to] (d8) -- (s10) node[pos=0.5,left]{$8$};
\draw[-to] (d9) -- (s11) node[pos=0.5,right]{$6$};
\draw[-to] (d9) -- (s12) node[pos=0.5,left]{$8$};
\end{tikzpicture}
\caption{Substitution for an $8$-face that is adjacent to three $4$-faces in a row with another $4$-face which is adjacent to the three $4$-faces}
\label{f:Subst8FaceWithThree4FacesInRowAndAnother4FaceBehindThem}
\end{figure}
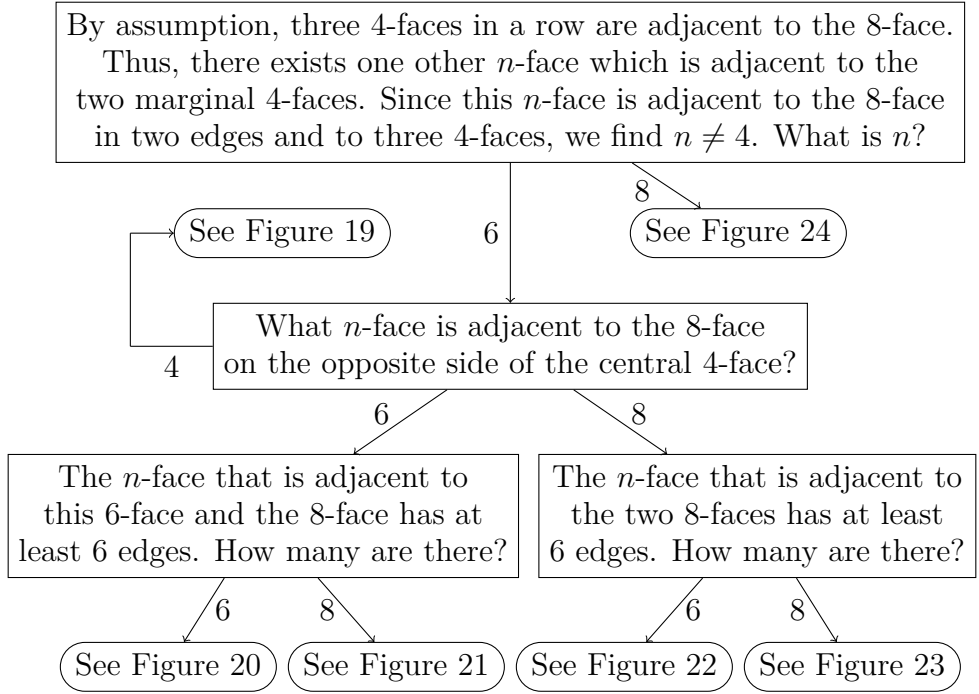
\end{proof}
\section{The Main Theorem}
In this section, we finally combine \cref{lem:graphsubstitutions} and \crefrange{lem:tech1}{lem:tech3} and prove the main \cref{thm-maintheorem}.

\cref{lem:conn->2-conn} is an easy observation, however, for the sake of completeness, it is included here.
\begin{lemma}\label{lem:conn->2-conn}
Every finite, simple, cubic, bipartite, connected graph is $2$-connected.
\end{lemma}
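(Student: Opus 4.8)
The plan is to show that $G$ has no cut vertex; under the definition given, this is precisely what $2$-connectedness amounts to (a cubic graph necessarily has at least four vertices, so it is non-trivial). So I would argue by contradiction: suppose $v$ is a cut vertex of $G$, and let $C_1,\dots,C_r$ with $r\ge 2$ be the connected components of $G\setminus v$.

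The first step is a pigeonhole argument on the three edges incident to $v$. Since $G$ is connected, each component $C_i$ is joined to $v$ by at least one edge; and since $\deg(v)=3$ while $r\ge 2$, not every component can be joined to $v$ by two or more edges (that would require at least $2r\ge 4>3$ edges at $v$). Hence some component $C$ is joined to $v$ by exactly one edge, say $vw$ with $w\in V(C)$. Because $C$ and the other components are distinct components of $G\setminus v$, there are no $G$-edges between $C$ and the rest of $G$ other than $vw$; therefore, inside the subgraph of $G$ induced on $V(C)$, the vertex $w$ has degree $2$ and every other vertex of $C$ retains all three of its $G$-edges and so has degree $3$.

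The second, decisive step is to use bipartiteness. As an induced subgraph of $G$, the graph $C$ is bipartite; I would fix a bipartition $V(C)=A\sqcup B$ with $w\in A$ without loss of generality. Every edge of $C$ has exactly one endpoint in $A$ and one in $B$, so counting the edges of $C$ by summing degrees over $A$ and over $B$ yields
\[
3|A|-1 \;=\; \sum_{a\in A}\deg_{C}(a) \;=\; |E(C)| \;=\; \sum_{b\in B}\deg_{C}(b) \;=\; 3|B| .
\]
Thus $3\bigl(|A|-|B|\bigr)=1$, which is absurd. This contradiction shows that $G$ has no cut vertex, hence $G$ is $2$-connected.

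The argument is short, and its only real content is the divisibility obstruction in the displayed equation (cubic plus bipartite forces the edge count of $C$ to be simultaneously $\equiv -1$ and $\equiv 0 \pmod 3$). The one place to be careful is the pigeonhole step: it must genuinely produce a component receiving exactly one of the three edges at $v$, which it does because connectedness forces every component to receive at least one edge while $3<2r$ rules out all of them receiving at least two.
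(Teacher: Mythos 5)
Your proof is correct and rests on the same mod-$3$ obstruction as the paper's: a component of $G$ minus the cut vertex, being bipartite and cubic except at its attachment to the cut vertex, cannot balance its degree sums modulo $3$. The paper phrases this as an invariant of an incremental drawing process and handles one \emph{or} two attaching edges simultaneously (so your pigeonhole reduction to exactly one edge, while valid, is not strictly needed, since $2\not\equiv 0 \pmod 3$ as well), whereas you give a cleaner static degree-sum count over the two colour classes; the underlying argument is the same.
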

\begin{proof}
For the sake of contradiction, we assume there exists a finite, simple, cubic, bipartite, connected graph that is not $2$-connected. Since the graph is bipartite, we can colour the vertices of the graph in green and red such that two adjacent vertices differ in colour. We choose and mark a vertex such that the graph with this vertex removed is disconnected. We choose a connected component of the graph without the marked vertex, but including the edges that connect the component with the marked vertex, and draw all vertices of this graph, one by one, starting at an arbitrary chosen vertex of in the connected component, together with all three edges leaving each vertex and link them, if necessary. During this drawing process, we colour all open edges, that is edges of which only one of the two adjacent vertices is drawn already, in the same colour as the drawn vertex adjacent to it.

At the beginning of the drawing process, there is no coloured edge, so the difference of green open edges and red open edges is $0$. In each drawing step, this difference is invariant modulo $3$. Thus, we find inductively that the number of green open edges minus the number of red open edges is divisible by three in each step.

However, when we have drawn the connected component completely, we still have open edges. These are adjacent to the marked vertex, so they have the same colour, because the graph is bipartite. Moreover, as the graph without the marked vertex is disconnected, there are precisely one or two such edges. In particular the difference of green open edges and red open edges is not divisible by three. This is a contradiction.
\end{proof}
\begin{theorem}\label{thm-maintheorem}
Let $G$ be a finite, simple graph that is cubic, bipartite, planar and connected. Fix an embedding of the graph in $\mathbb{R}^{2}$ and suppose that every face of the embedding is of size at most $8$. Then $G$ is Hamiltonian.

In particular, every Barnette graph, with faces of size up to $8$ is Hamiltonian.
\end{theorem}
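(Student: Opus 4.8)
The plan is to prove \cref{thm-maintheorem} by induction on the number of vertices of $G$, using \cref{lem:graphsubstitutions} together with \crefrange{lem:tech1}{lem:tech3} to pass to a strictly smaller graph in the same class.

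\emph{Base case and structure of the induction.} If the chosen embedding has no face of size $8$, then, since $G$ is bipartite, every face has size $4$ or $6$; by \cref{lem:conn->2-conn} the graph $G$ is $2$-connected, so it is Hamiltonian by the theorem of Shao and Wu \cite{ShaWu25} (for the $3$-connected case one may instead invoke Goodey's \cref{thm-goodey}). Now suppose the embedding has a face of size $8$ and that the statement holds for every graph in the class with fewer vertices. The key claim is that at least one of the three local hypotheses of \crefrange{lem:tech1}{lem:tech3} is satisfied: a $4$-face adjacent to no $8$-face and to fewer than four $4$-faces, a $4$-face adjacent to exactly one $8$-face, or an $8$-face adjacent to at least three $4$-faces. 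Granting this, the relevant lemma yields a graph $H$ in the same class, together with subgraphs $G'\subseteq G$ and $H'\subseteq H$ realising one of the listed graph substitutions and meeting the hypotheses of \cref{lem:graphsubstitutions}; since a graph substitution requires $|V(H)|<|V(G)|$, the graph $H$ is Hamiltonian by the induction hypothesis, and hence so is $G$. The addendum on Barnette graphs then follows because such a graph is connected and, being $3$-connected and planar, has an essentially unique embedding, so the bound on face sizes is unambiguous.

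\emph{Why a reducible configuration always exists.} Euler's formula for a cubic planar graph together with bipartiteness (all faces of size $4$, $6$ or $8$) gives $f_4 = f_8 + 6$, where $f_k$ denotes the number of faces of size $k$; in particular $f_4\ge 6$. Suppose, for contradiction, that none of the three configurations above occurs. Then every $8$-face is adjacent to at most two $4$-faces, and every $4$-face either has all four of its neighbouring faces of size $4$ --- call it a \emph{deep} $4$-face --- or is adjacent to at least two $8$-faces. Counting incident pairs of a $4$-face and an $8$-face, the number of $4$-faces of the second kind is at most $f_8$, so there are at least $f_4 - f_8 = 6$ deep $4$-faces. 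On the other hand, let $K$ be a maximal connected union of $4$-faces. A short computation with Euler's formula, applied to the subgraph of $G$ bounding $K$, shows (since $G$ has a face of size $>4$) that $K$ contains at most three vertices all three of whose incident faces belong to $K$. But every vertex $v$ of a deep $4$-face $f$ has exactly this property --- the three faces at $v$ are $f$ and the two $4$-faces adjacent to $f$ along the edges of $f$ at $v$, all of which lie in the cluster of $f$ --- and $f$ has four distinct vertices. Hence no deep $4$-face can exist, contradicting the count of at least $6$, and the claim is proved.

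\emph{Expected main obstacle.} The genuinely new work is concentrated in the last step, namely the bound on the number of ``interior'' vertices of a cluster of $4$-faces. Making it fully rigorous requires care with clusters whose complement in the sphere is disconnected, where the Euler count acquires a correction term, and with the degenerate case in which $K$ nearly exhausts $G$ (e.g.\ $G\cong Q_3$); the cleanest route is probably to pass to a minimal counterexample, where one may additionally assume $3$-connectedness and thereby exclude such pathologies as well as pairs of faces sharing two edges. Apart from this, the argument is an assembly of results already in hand, the substantive difficulty of the paper lying in the computational \cref{lem:graphsubstitutions} and in the hand-construction of the graph substitutions themselves.
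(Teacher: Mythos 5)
Your overall strategy coincides with the paper's: pass to a minimal counterexample (equivalently, induct on the number of vertices), use Euler's formula to get $\nu_{4}=\nu_{8}+6$, show that one of the configurations of \crefrange{lem:tech1}{lem:tech3} must occur, and then lift a Hamiltonian cycle from the smaller graph $H$ through \cref{lem:graphsubstitutions}. Your double count (each non-deep $4$-face is adjacent to at least two $8$-faces, each $8$-face to at most two $4$-faces, hence at most $\nu_{8}$ non-deep $4$-faces) is a perfectly good substitute for the paper's use of Hall's marriage theorem, and invoking the Shao--Wu result (or \cref{thm-goodey}) for the case without $8$-faces is harmless, though the paper needs no such external input.

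The gap lies in the one step where you depart from the paper: the claim that a maximal connected cluster $K$ of $4$-faces contains at most three vertices all of whose incident faces lie in $K$. You do not prove it (``a short computation with Euler's formula shows''), you yourself flag it as the main obstacle, and as stated it is false for the cube (all eight vertices have this property) and delicate for clusters in merely $2$-connected graphs --- issues you mention but do not resolve. More importantly, nothing close to this strength is needed: the only thing to exclude is a ``deep'' $4$-face, i.e.\ a $4$-face whose four neighbouring faces are all $4$-faces, and that is a two-line local argument. If the central face is $abcd$ with the third edges $aa'$, $bb'$, $cc'$, $dd'$ at its vertices, the four surrounding $4$-faces force the edges $a'b'$, $b'c'$, $c'd'$, $d'a'$, so every vertex already has degree three and $G$ is the cube graph of \cref{f:cube}, which is Hamiltonian and hence cannot be a (minimal) counterexample. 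This is exactly how the paper disposes of that case; replacing your cluster claim by this observation closes the gap and makes your argument essentially identical to the paper's proof.
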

\begin{proof}
For the sake of contradiction, we assume the opposite. Let $G$ be a counterexample of minimal number of vertices, that is, $G$ is a finite, simple graph that is cubic, bipartite, planar and connected, together with a fixed embedding of $G$ in $\mathbb{R}^{2}$, such that every face of the embedding of $G$ is of size up to $8$ and we suppose that $G$ is not Hamiltonian and every other such graph with less vertices than $G$ is Hamiltonian.

Since $G$ is bipartite, every face of the embedding of $G$ is a $4$-, $6$- or $8$-face. We denote by $\nu_{4}$, $\nu_{6}$ and $\nu_{8}$ the number of $4$-, $6$- and $8$-faces, respectively. Applying Euler's formula (see Theorem $4.2.9$ in \cite{Die05}) yields
\begin{align*}
2&=\left(\frac{1}{3}(4\nu_{4}+6\nu_{6}+8\nu_{8})\right)-\left(\frac{1}{2}(4\nu_{4}+6\nu_{6}+8\nu_{8})\right)+\left(\nu_{4}+\nu_{6}+\nu_{8}\right)\\
&=\frac13\nu_{4}-\frac13\nu_{8},
\end{align*}
that is
\begin{align}
\nu_{4}=\nu_{8}+6.\label{eq:nu4=nu8+6}
\end{align}
In the following, we will prove $\nu_{4}\le\nu_{8}$ which is a contradiction to \cref{eq:nu4=nu8+6}.

We assume that there exists a $4$-face that is adjacent to at most one $8$-face. If this $4$-face is adjacent to four other $4$-faces, $G$ will be the graph given in \cref{f:cube}, which is clearly Hamiltonian. Hence, it cannot be equal to our minimal counterexample, which is a contradiction.
\begin{figure}[htp]
\centering
\includegraphics[width=0.2\textwidth]{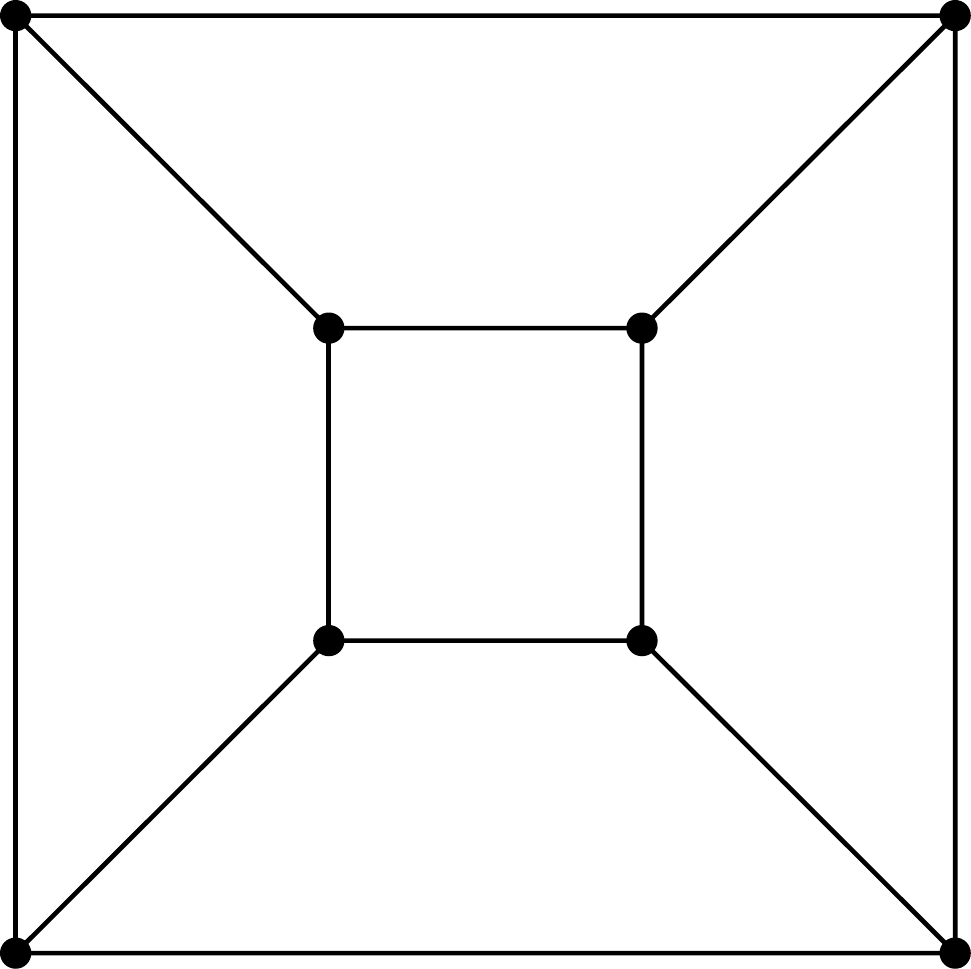}
\caption{The Cube Graph}
\label{f:cube}
\end{figure}

Otherwise, we apply \cref{lem:tech1} or \cref{lem:tech2} and obtain a graph $H$ that has less vertices than $G$ such that it fulfils the conditions given in \cref{thm-maintheorem} and that $G$ and $H$ fulfil the conditions given in \cref{lem:graphsubstitutions}. By the assumed minimality, $H$ has a Hamilton cycle. By \cref{lem:graphsubstitutions}, $G$ is Hamiltonian, which is a contradiction. Therefore, such a $4$-face that is adjacent to at most one $8$-face does not exist in our minimal counterexample.

Now, we assume that there exists an $8$-face that is adjacent to three or more $4$-faces. We apply \cref{lem:tech3} and obtain a graph $H$ that has less vertices than $G$ such that it fulfils the conditions given in \cref{thm-maintheorem} and that $G$ and $H$ fulfil the conditions given in \cref{lem:graphsubstitutions}. By the assumed minimality, $H$ has a Hamilton cycle. By \cref{lem:graphsubstitutions}, $G$ is Hamiltonian, which is a contradiction. Thus, such an $8$-face that is adjacent to three or more $4$-face does not exist in our minimal counterexample.

Combined, we have proven that every $4$-face is adjacent to at least two $8$-faces and that every $8$-face is adjacent to at most two $4$-faces. Applying the box principle yields that for any given set of $4$-faces, the cardinality of the set of $8$-faces that are adjacent to at least one $4$-face in the given set, is larger or equal to the cardinality of the given set of $4$-faces. Hall's marriage theorem (see Theorem $2.1.2$ in \cite{Die05}) implies the existence of a perfect matching of all $4$-faces to a subset of the $8$-faces, where matched faces are adjacent. In particular, this implies $\nu_{4}\le\nu_{8}$ which is a clear contradiction to \cref{eq:nu4=nu8+6} and, hence, finishes the proof.
\end{proof}
\section{Prospect}
\label{sec:prospect}
In this section, we provide a more abstract discussion of the proof of the main \cref{thm-maintheorem} and question, whether and how the proof method could be used to deduce even stronger results.

At first glance, the existence of Hamiltonian cycles seems to be a global property. However, computations suggest that ``large'' Barnette graphs admit ``many'' Hamiltonian cycles. Thus, one might ask whether transforming a Barnette graph only locally to another Barnette graph does preserve any Hamiltonian cycle expect for changes in a neighbourhood of the part of the graph that has been changed. That is, might the existence of Hamiltonian cycles perhaps be a \emph{local property} for Barnette graphs?

In the proof of \cref{lem:graphsubstitutions} we have actually proven this for all not necessarily $3$-connected Barnette graphs in which every face is of size at most $8$.

Therefore, we can hope that the existence of Hamiltonian cycles is a local property for Barnette graphs in which every face is of size at most $10$ or $12$ or perhaps even for all Barnette graphs. This could imply that the methods of the proof of \cref{lem:graphsubstitutions} and, thus, also \cref{thm-maintheorem} could be used to prove stronger results such as the existence of Hamiltonian cycles for Barnette graphs with these bounds or even for all Barnette graphs, that is proving \cref{conj:Barnette}. We would ``simply'' need to list and test several graph substitutions such as we did in the proof of \cref{lem:graphsubstitutions}. However, there could be the following three obstacles on the road.

Firstly, we distinguished a large amount of cases. The number of cases to distinguish might grow rapidly when the bound lifts from $8$ to $10$, to $12$, etc. and it is unclear whether the number of cases remains finite when considering more and more, perhaps even all Barnette graphs. So, \emph{the computer-aided calculations might become unfeasible for modern computers}. However, the code used for the calculations of this paper does not make use of symmetry arguments. Despite that, the code as such is also far from being optimal and could, hence, be optimized, including for instance GPU-multithreading. Moreover, it is still unclear whether the number of cases to consider grows so fast or if it might be bounded, even for all Barnette graphs.

Secondly, when considering all Barnette graphs by only finitely many cases, we would necessarily have an upper bound for the size of a neighbourhood. Since we would consider arbitrarily large faces, \emph{there would exist faces that cannot be covered by any neighbourhood}. Therefore, we would need more arguments that can limit the number of cases to consider. The use of Euler's formula in the proof of \cref{thm-maintheorem} is an example of such arguments, but it is likely that more arguments would be needed to do this.

Finally, we did not \emph{require $3$-connectedness} in the proof of \cref{lem:graphsubstitutions} and \cref{thm-maintheorem}. In fact, without the additional assumption of $3$-connectedness, the generalisation from $8$ to $10$ is false already, see for instance \cref{f:10HolesNotHamil}. Similar to \cite{Goo75}, one could, perhaps, just consider $3$-connected graph substitutions, that is, avoiding substitutions like \cref{f:25ab}, \cref{f:1cbc} and \cref{f:1cbbb}. To check these substitutions with a computer, the program code would need to be rewritten in such a way, that it only checks $3$-connected graphs.
\printbibliography
\appendix
\section{A List of Graph Substitutions}
\label{sec:GraphSubstitutions}
Here, we list all graphs and graph substitutions needed for \cref{lem:graphsubstitutions}, \crefrange{lem:tech1}{lem:tech3} and \cref{thm-maintheorem}.
\begin{figure}[thp]
\centering
\begin{subfigure}{0.3\textwidth}
\centering
\includegraphics[width=0.9\textwidth]{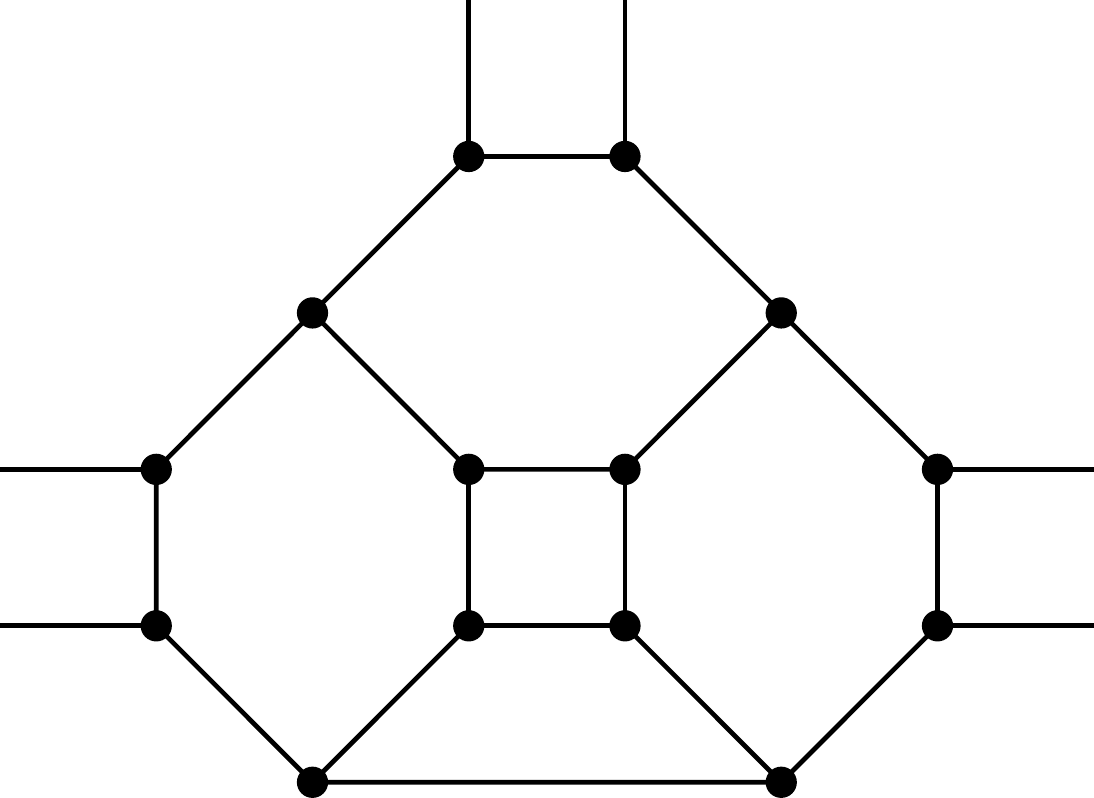}
\caption{Neighbourhood}
\label{f:o20}
\end{subfigure}
\begin{subfigure}{0.3\textwidth}
\centering
\includegraphics[width=0.9\textwidth]{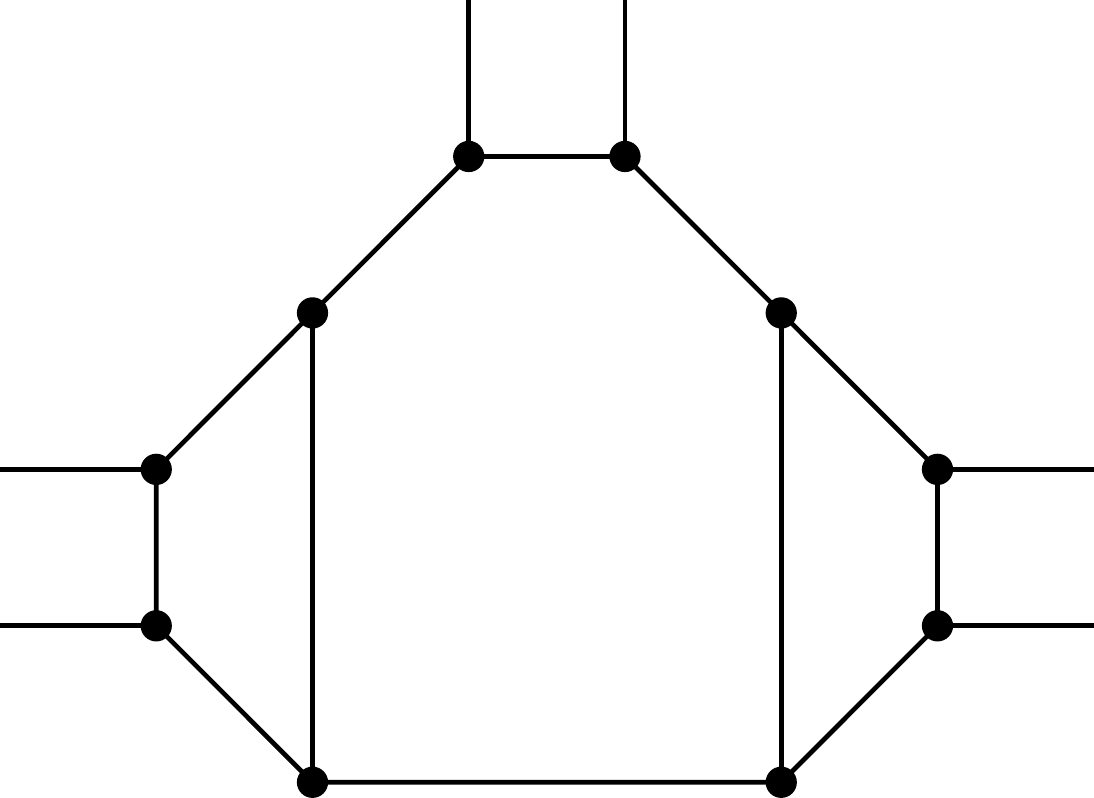}
\caption{Substitution}
\label{f:20sub}		
\end{subfigure}
\caption{Graph substitution}
\label{f:20}
\end{figure}

The graph substitution shown in \cref{f:20} should be interpreted as follows. Given a finite, simple graph that is cubic, bipartite, planar and connected. Consider a fixed embedding of the graph in $\mathbb{R}^{2}$ and suppose that every face of the embedding is of size at most $8$. Suppose furthermore that there exists a neighbourhood where the graph is as depicted in \cref{f:o20}. This neighbourhood is separated from the graph by its boundary as given in \cref{f:20boundary}. \cref{f:20sub} shows another part of a graph with the same boundary. Therefore, we can replace the neighbourhood in the graph, as given in \cref{f:o20}, by the one given in \cref{f:20sub} to obtain a new graph which, again, is cubic, bipartite, planar and connected. Moreover, the embedding stays fixed, and every face of the new graph is, again, of size at most $8$. The two graphs differ in this neighbourhood only. We note that the new graph has less vertices than the original graph. All other graph substitutions presented in this section work alike.
\begin{figure}[thp]
\centering
\includegraphics[width=0.27\textwidth]{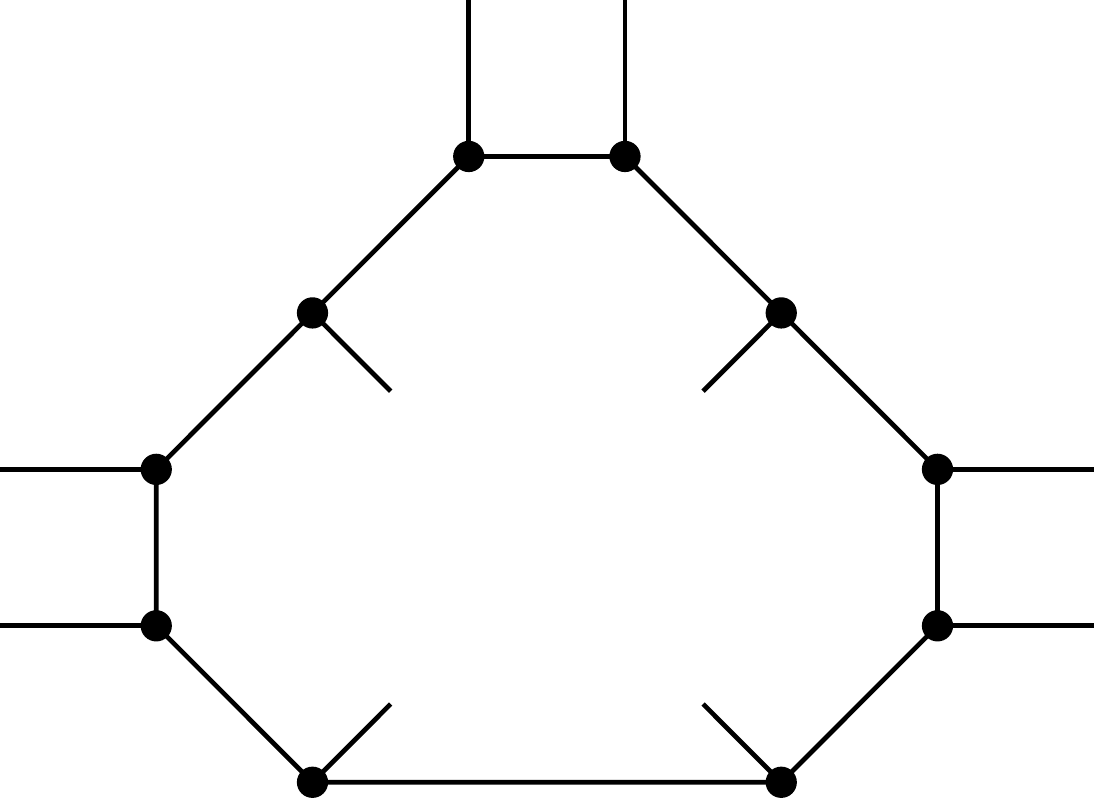}
\caption{The boundary of the graph substitution given in \cref{f:20}}
\label{f:20boundary}
\end{figure}

\begin{figure}[hp]
\centering
\captionbox{Graph substitution\label{f:18}}[0.49\textwidth]{
\subcaptionbox{Neighbourhood\label{f:o18}}[0.24\textwidth]{\includegraphics[width=0.22\textwidth]{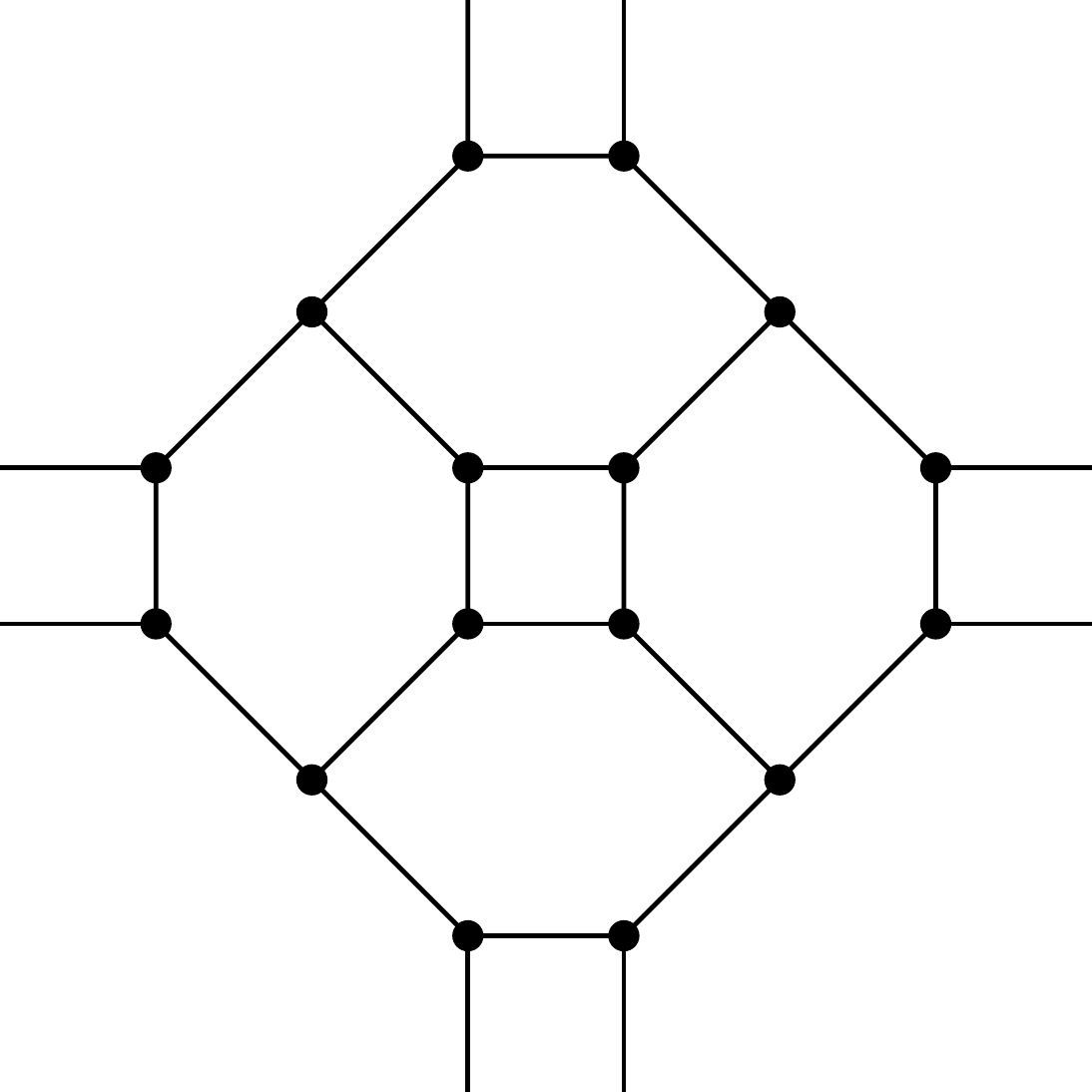}}
\subcaptionbox{Substitution\label{f:18sub}}[0.24\textwidth]{\includegraphics[width=0.22\textwidth]{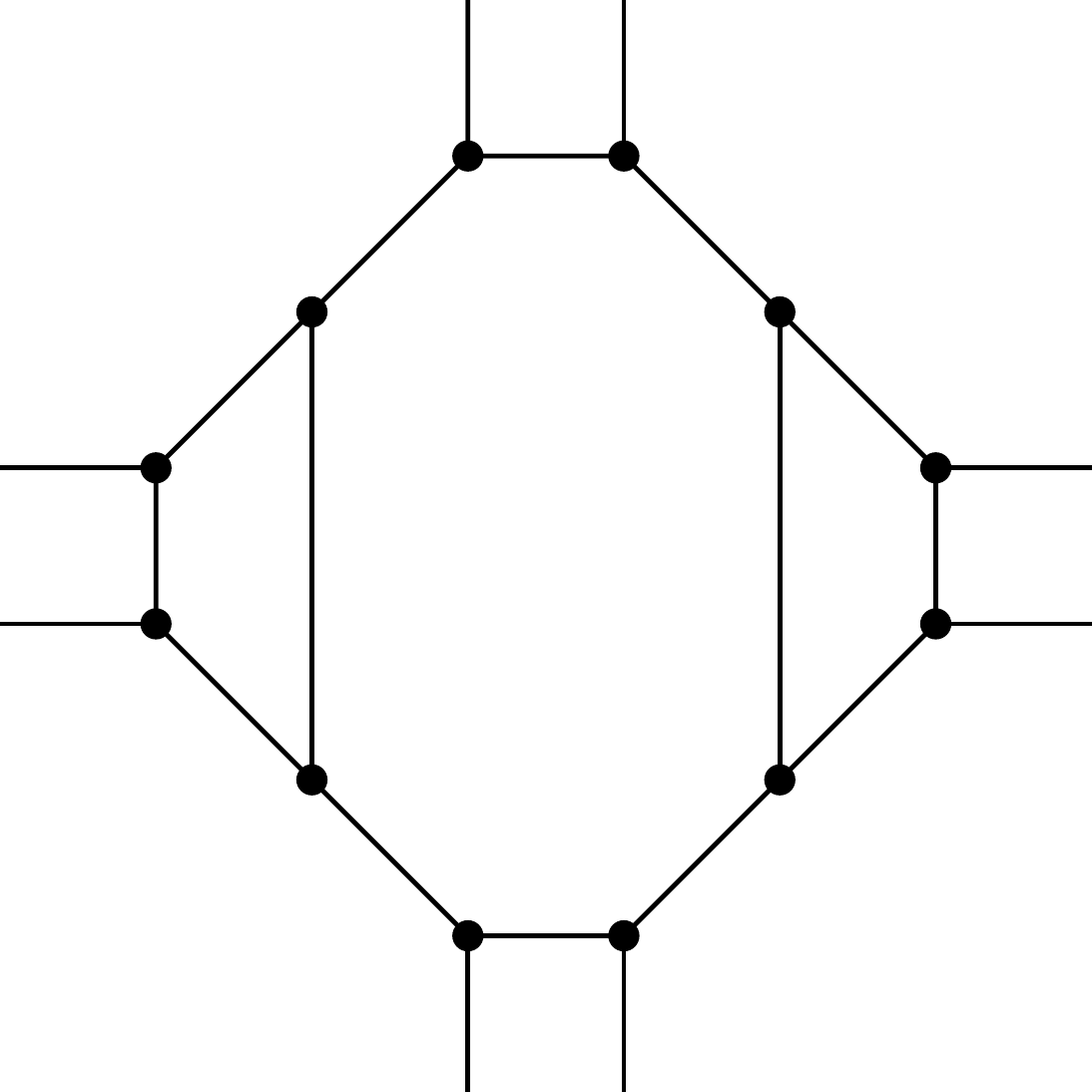}}
}
\captionbox{Graph substitution\label{f:22}}[0.49\textwidth]{
\subcaptionbox{Neighbourhood\label{f:o22}}[0.24\textwidth]{\includegraphics[width=0.22\textwidth]{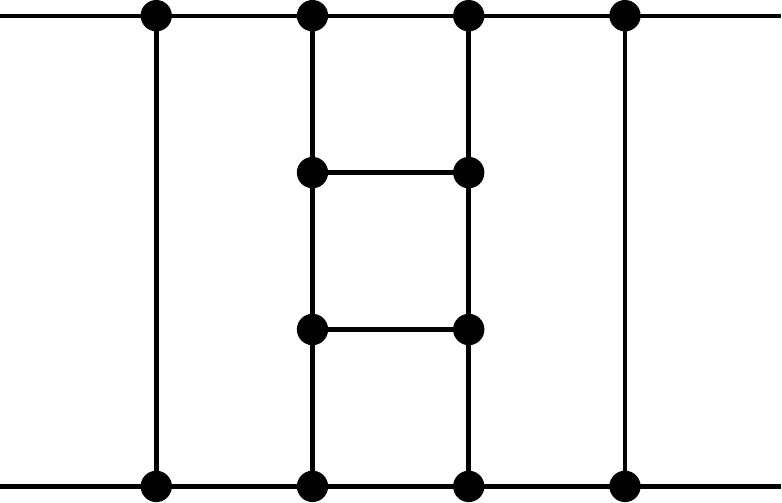}}
\subcaptionbox{Substitution\label{f:22sub}}[0.24\textwidth]{\includegraphics[width=0.22\textwidth]{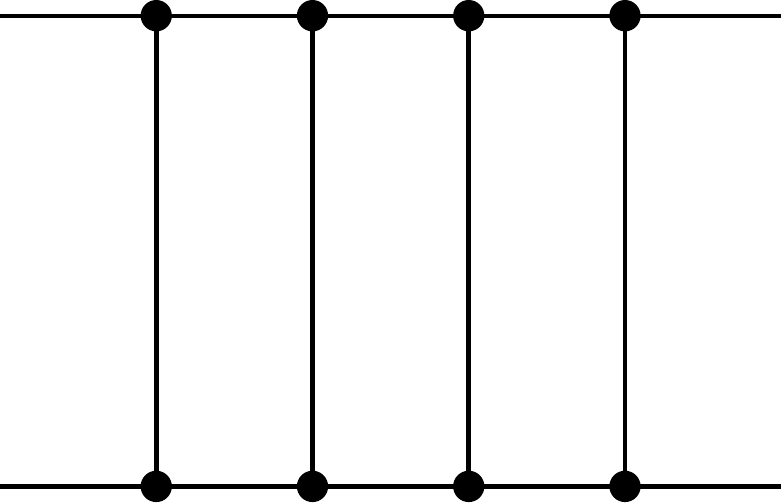}}
}
\end{figure}
\begin{figure}[hp]
\centering
\captionbox{Graph substitution\label{f:23b}}[0.49\textwidth]{
\subcaptionbox{Neighbourhood\label{f:o23b}}[0.24\textwidth]{\includegraphics[width=0.22\textwidth]{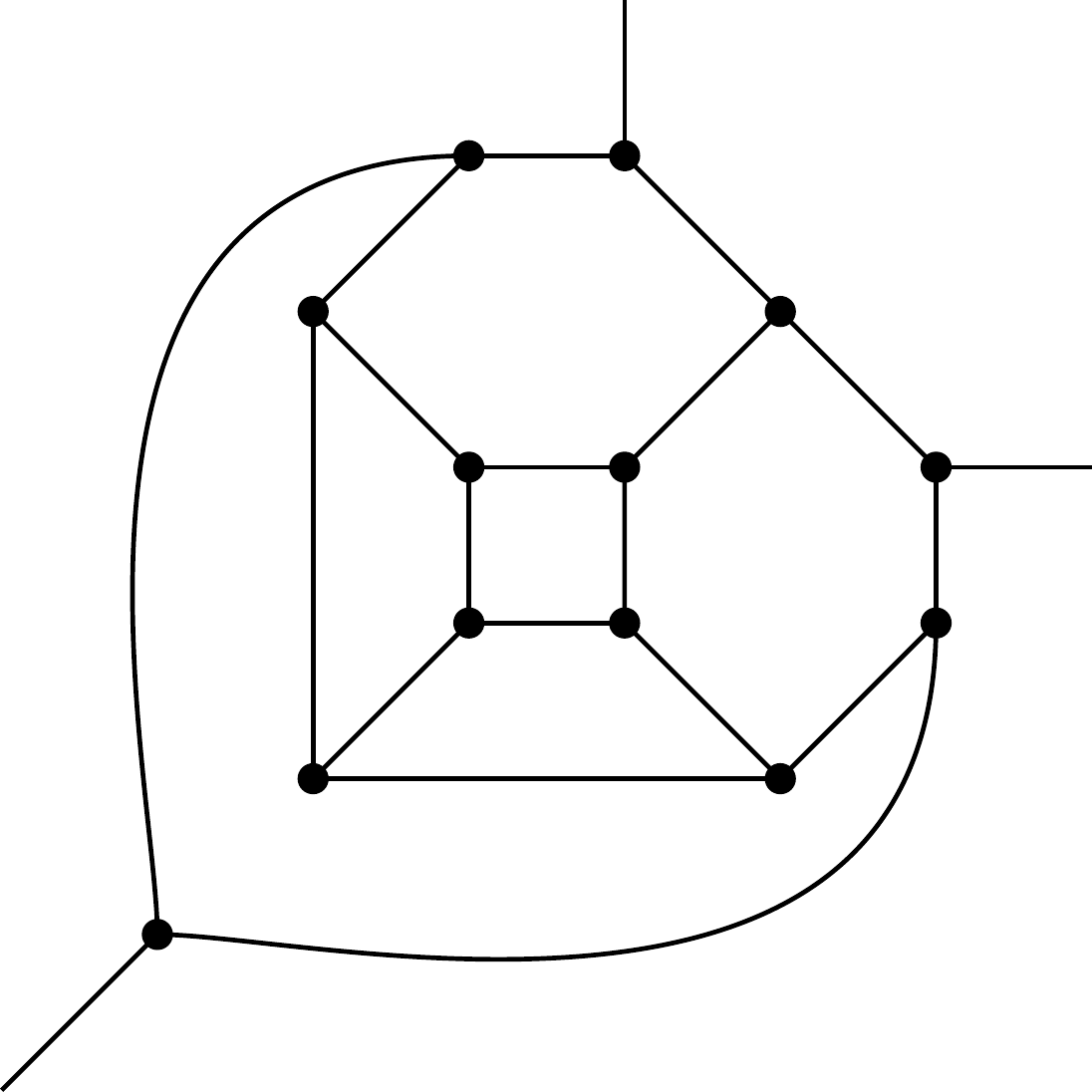}}
\subcaptionbox{Substitution\label{f:23bsub}}[0.24\textwidth]{\includegraphics[width=0.22\textwidth]{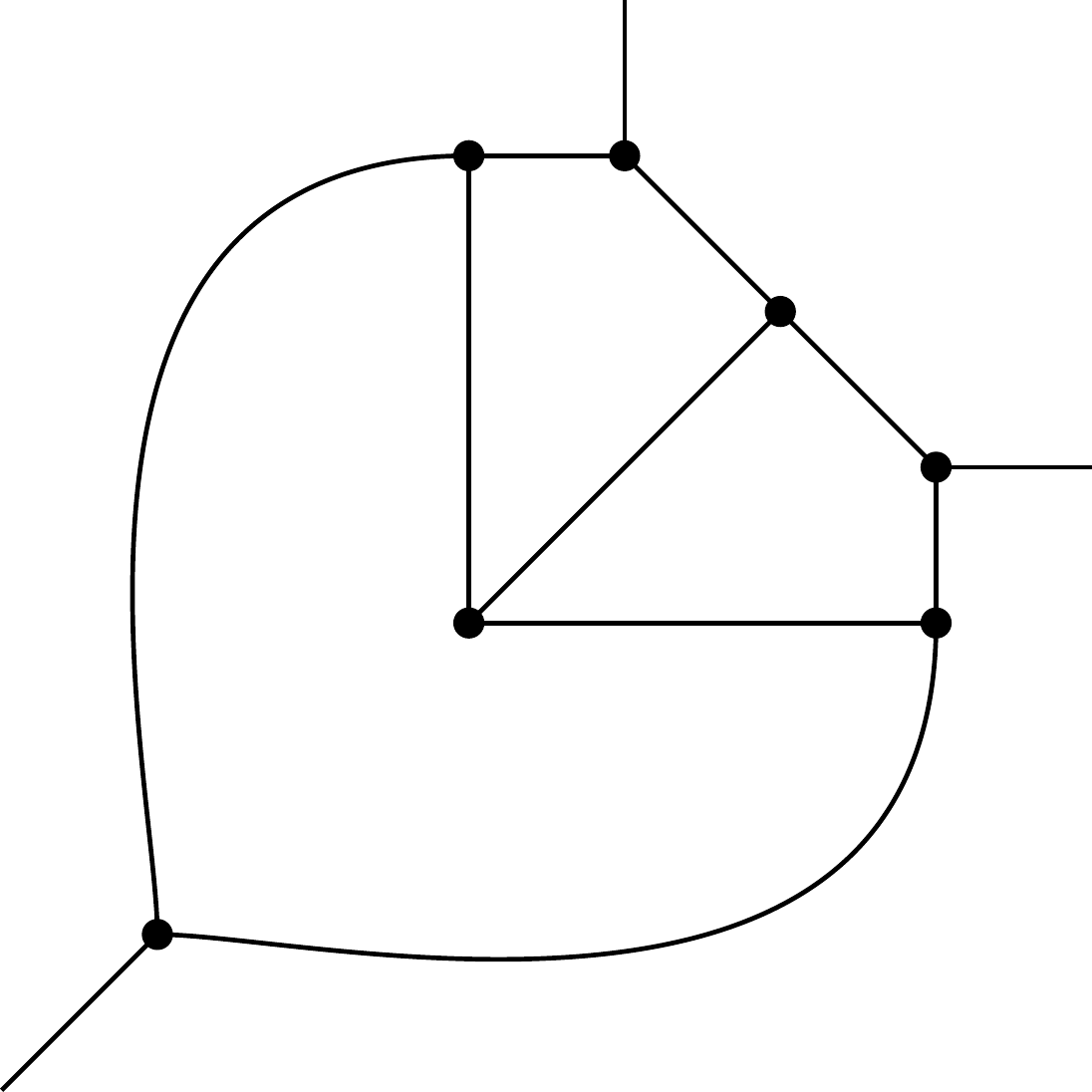}}
}
\captionbox{Graph substitution\label{f:25ab}}[0.49\textwidth]{
\subcaptionbox{Neighbourhood\label{f:o25ab}}[0.24\textwidth]{\includegraphics[width=0.22\textwidth]{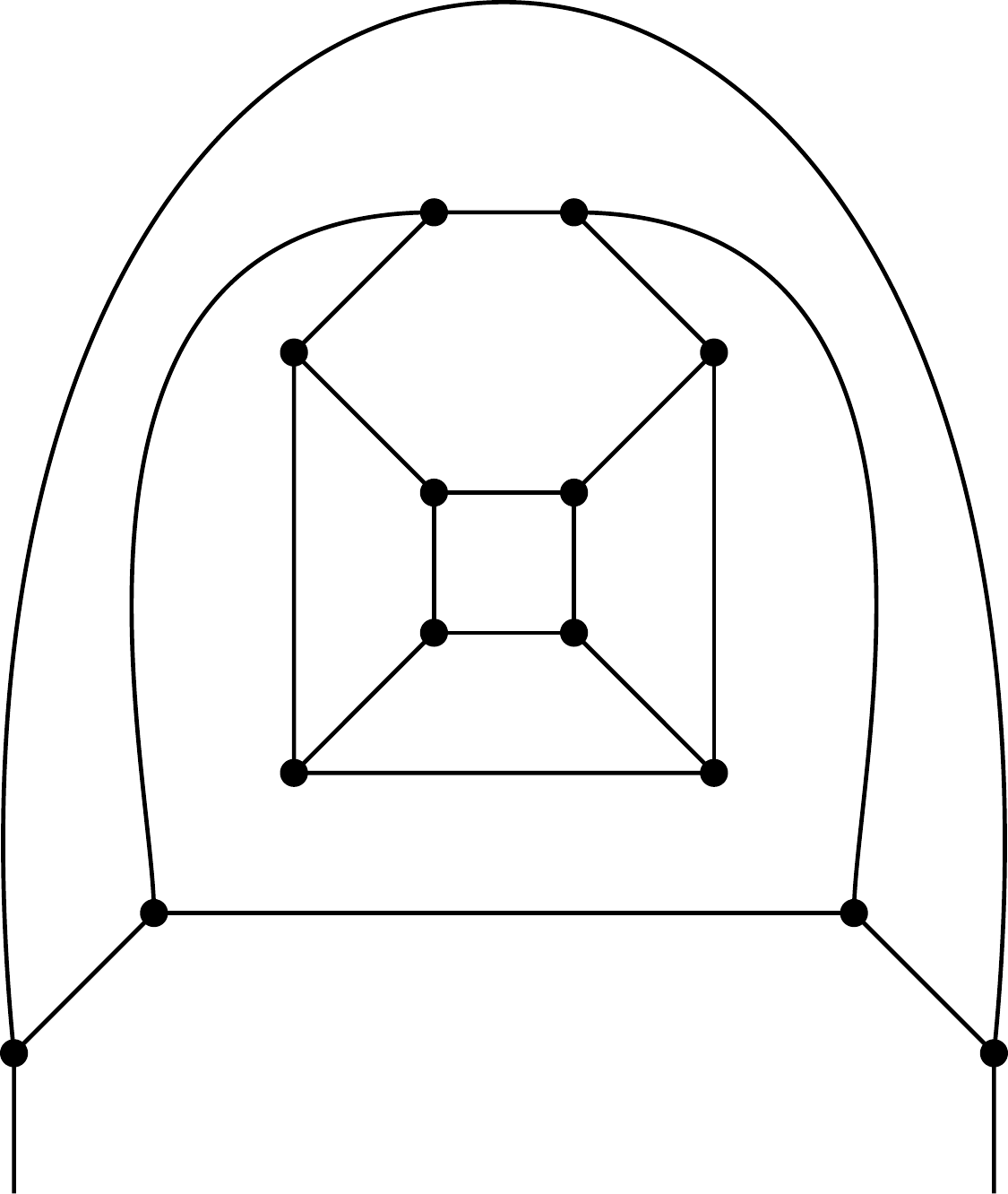}}
\subcaptionbox{Substitution\label{f:25absub}}[0.24\textwidth]{\includegraphics[width=0.22\textwidth]{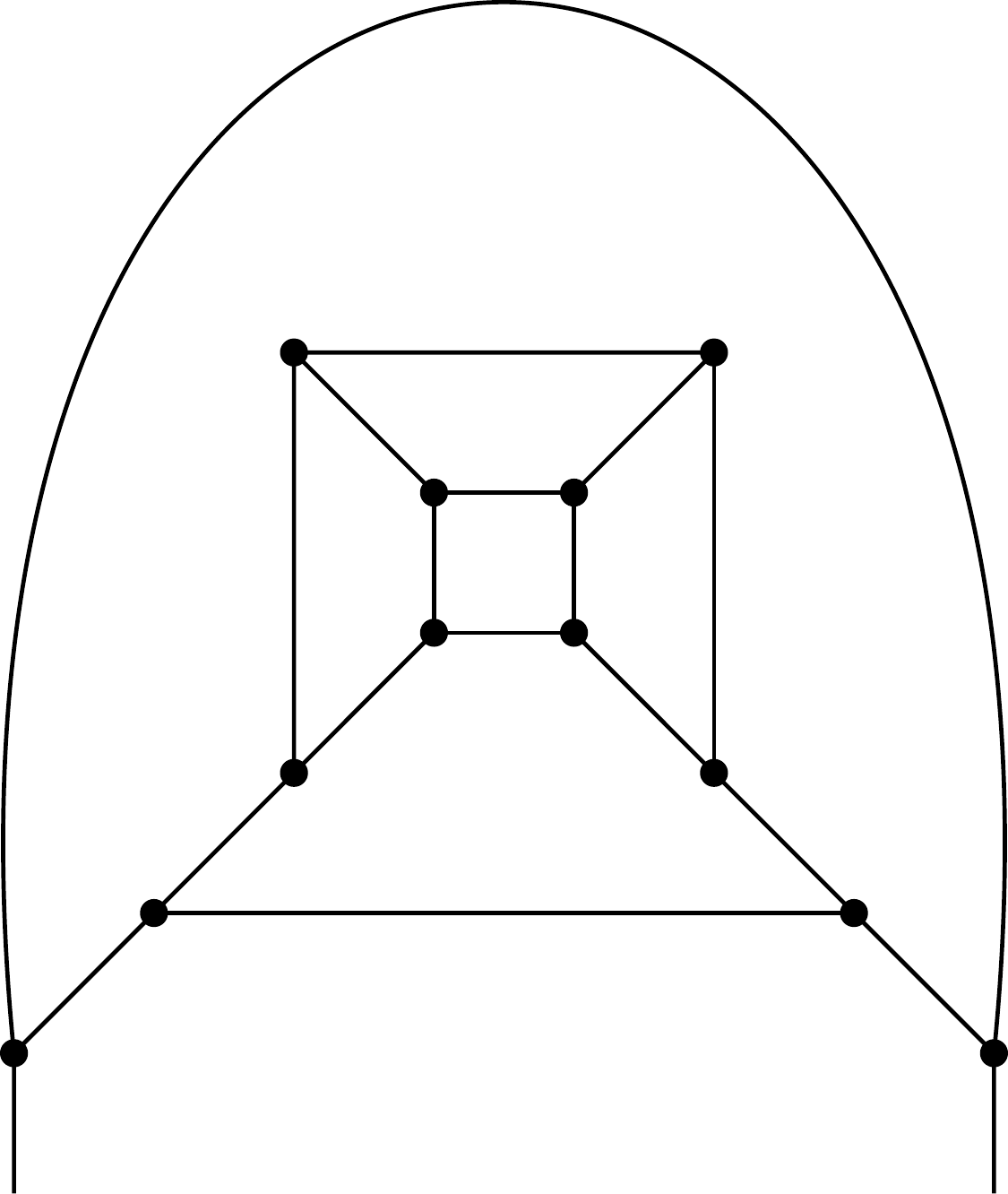}}
}
\end{figure}
\begin{figure}[hp]
\centering
\captionbox{Graph substitution\label{f:25aa}}[0.49\textwidth]{
\subcaptionbox{Neighbourhood\label{f:o25aa}}[0.24\textwidth]{\includegraphics[width=0.22\textwidth]{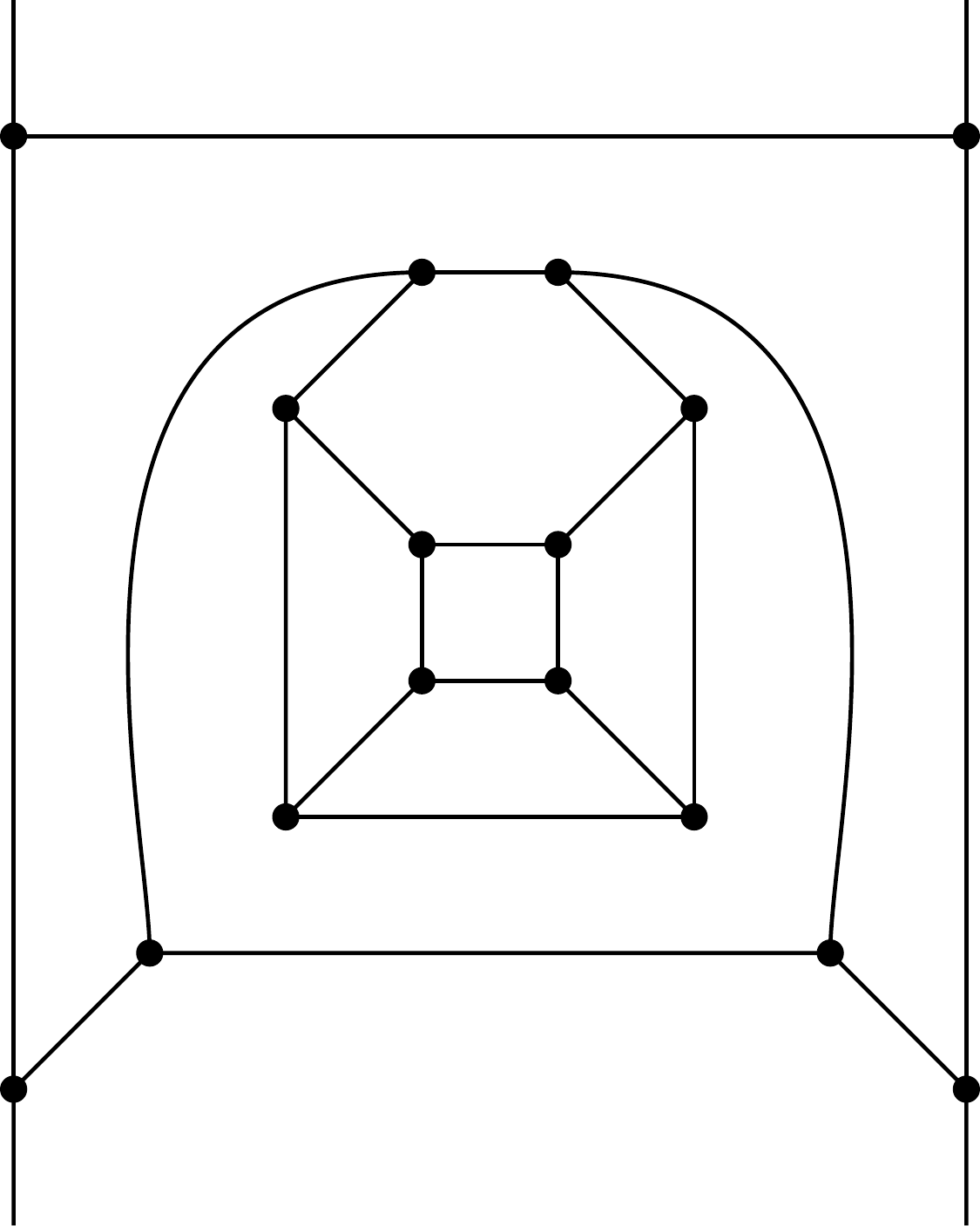}}
\subcaptionbox{Substitution\label{f:25aasub}}[0.24\textwidth]{\includegraphics[width=0.22\textwidth]{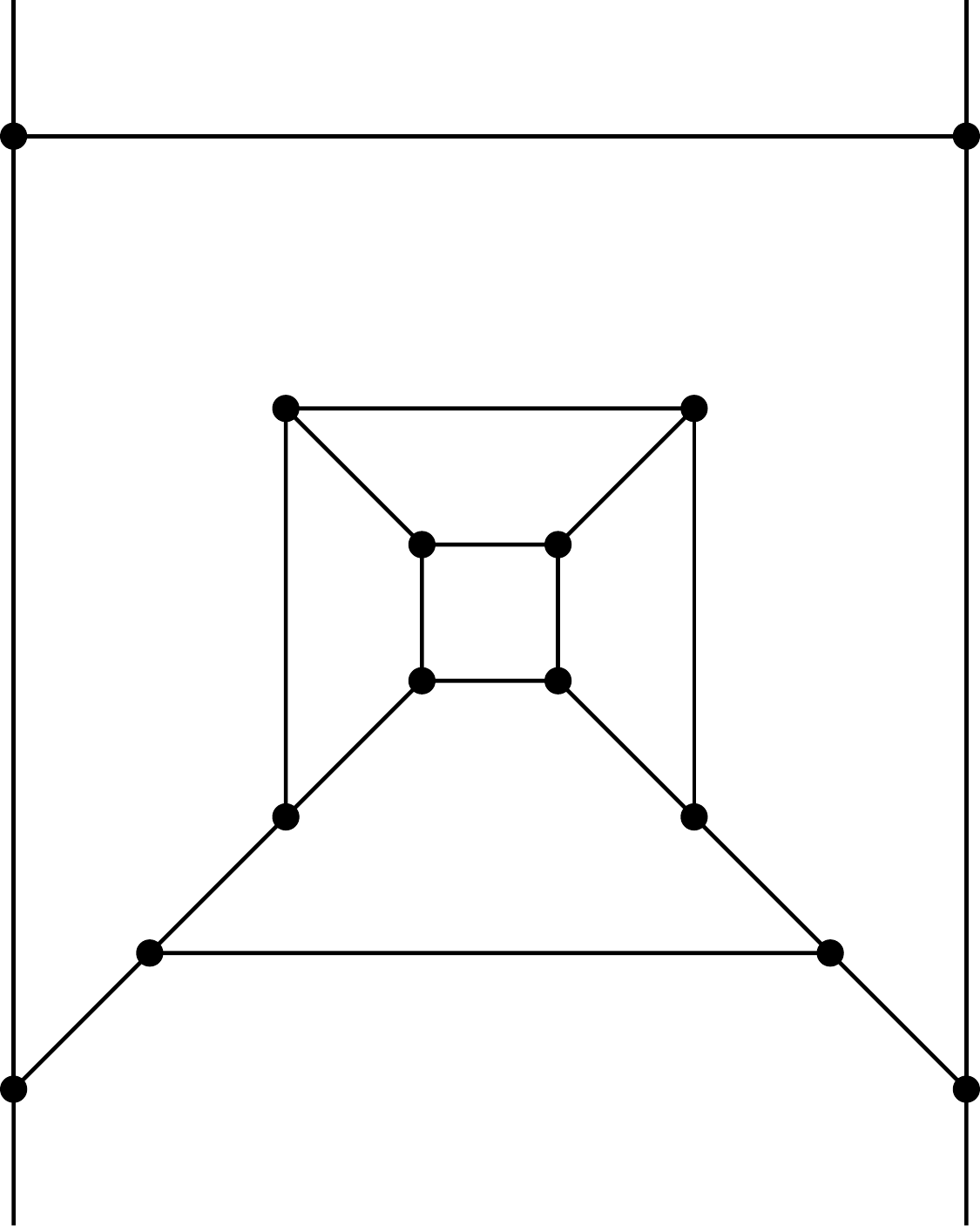}}
}
\captionbox{Graph substitution\label{f:16c}}[0.49\textwidth]{
\subcaptionbox{Neighbourhood\label{f:o16c}}[0.24\textwidth]{\includegraphics[width=0.22\textwidth]{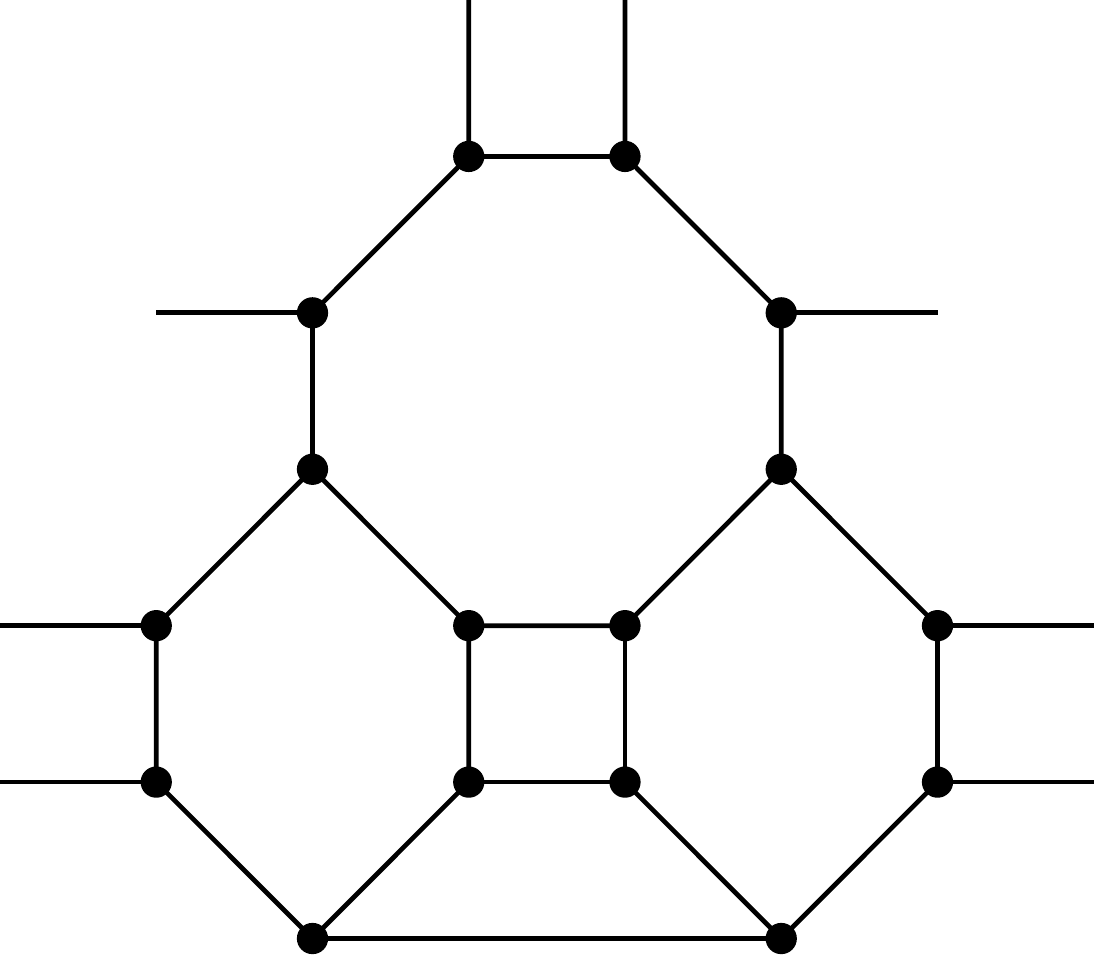}}
\subcaptionbox{Substitution\label{f:16csub}}[0.24\textwidth]{\includegraphics[width=0.22\textwidth]{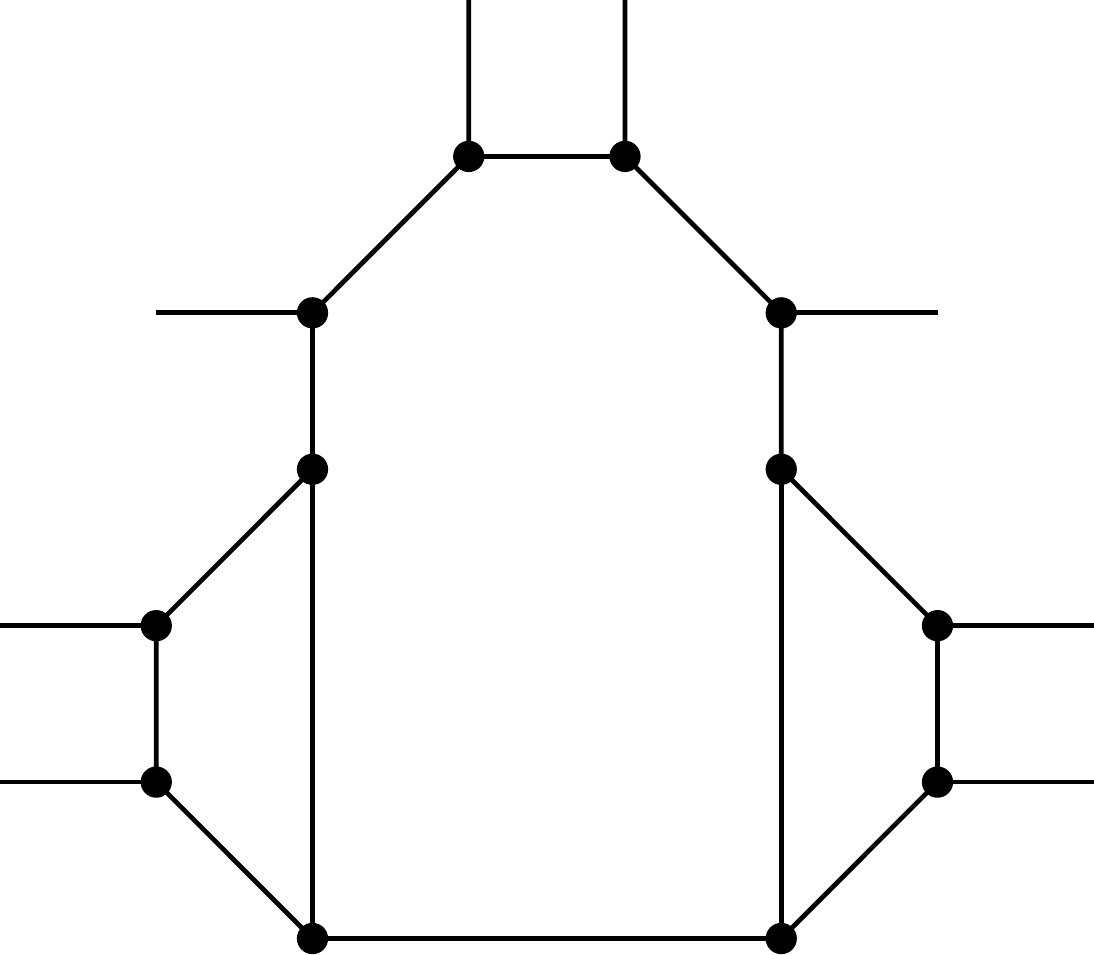}}
}
\end{figure}
\begin{figure}[hp]
\centering
\captionbox{Graph substitution\label{f:16b}}[0.49\textwidth]{
\subcaptionbox{Neighbourhood\label{f:o16b}}[0.24\textwidth]{\includegraphics[width=0.22\textwidth]{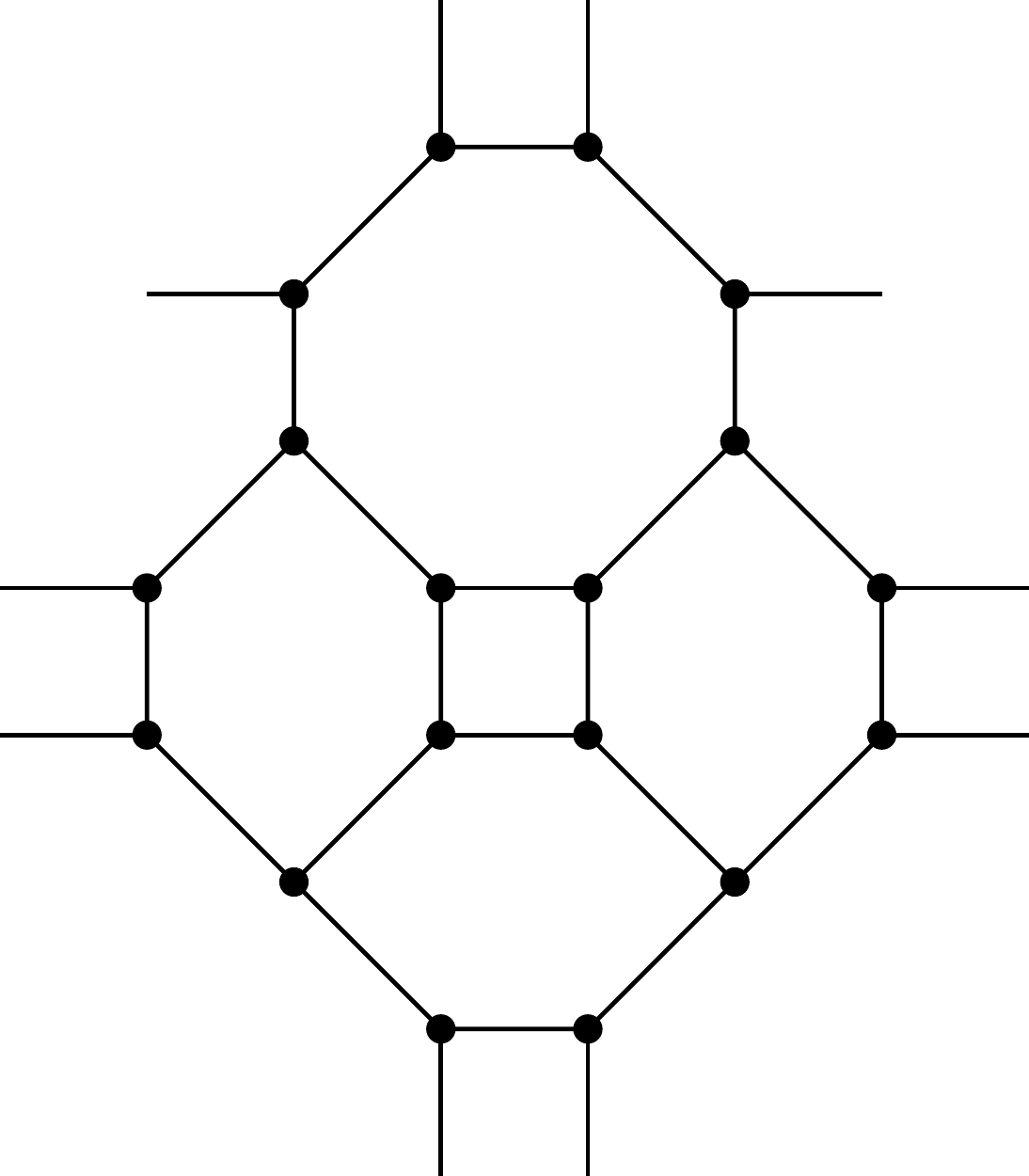}}
\subcaptionbox{Substitution\label{f:16bsub}}[0.24\textwidth]{\includegraphics[width=0.22\textwidth]{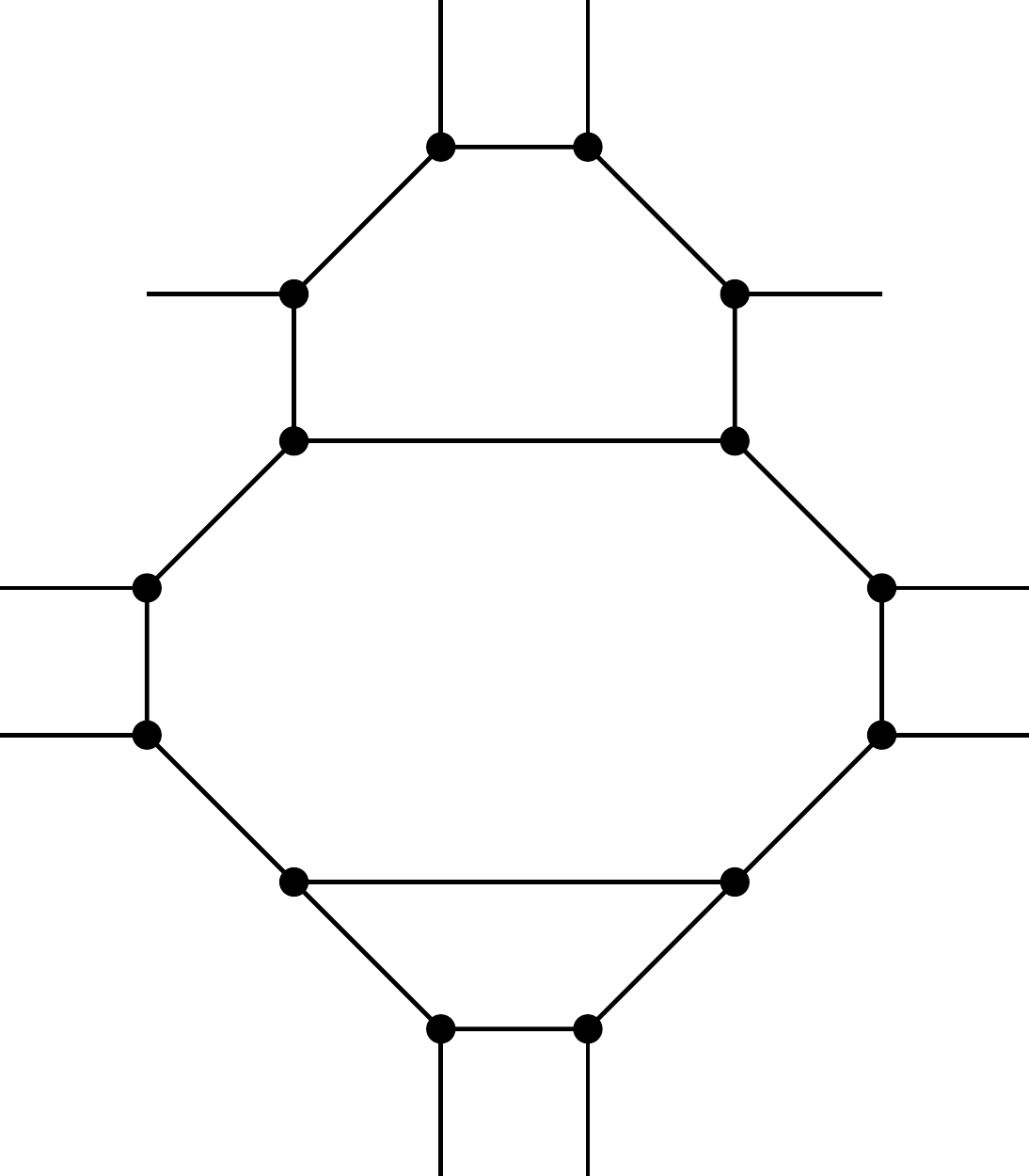}}
}
\captionbox{Graph substitution\label{f:14cb}}[0.49\textwidth]{
\subcaptionbox{Neighbourhood\label{f:o14cb}}[0.24\textwidth]{\includegraphics[width=0.22\textwidth]{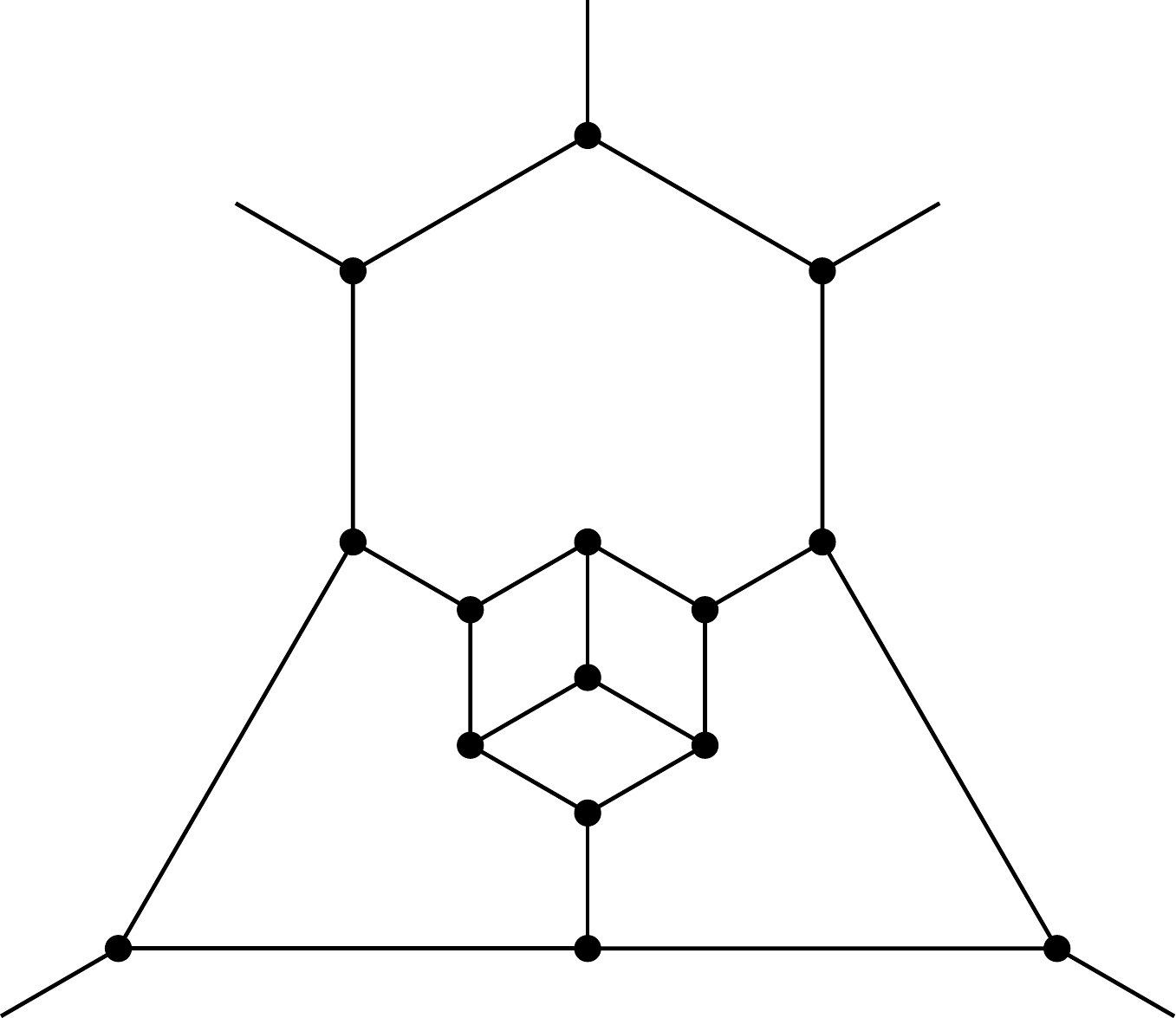}}
\subcaptionbox{Substitution\label{f:14cbsub}}[0.24\textwidth]{\includegraphics[width=0.22\textwidth]{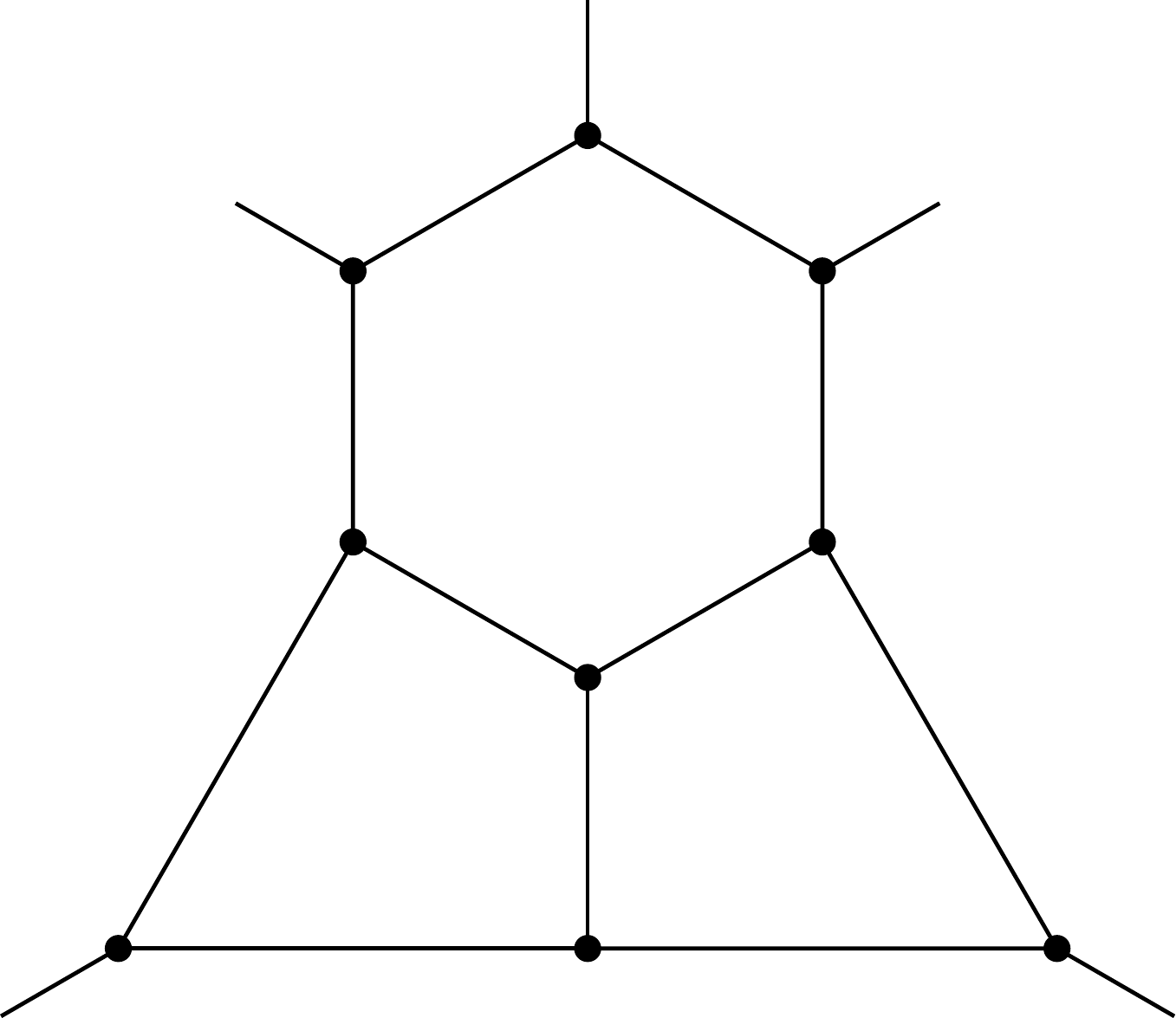}}
}
\end{figure}
\begin{figure}[hp]
\centering
\captionbox{Graph substitution\label{f:13cb}}[0.49\textwidth]{
\subcaptionbox{Neighbourhood\label{f:o13cb}}[0.24\textwidth]{\includegraphics[width=0.22\textwidth]{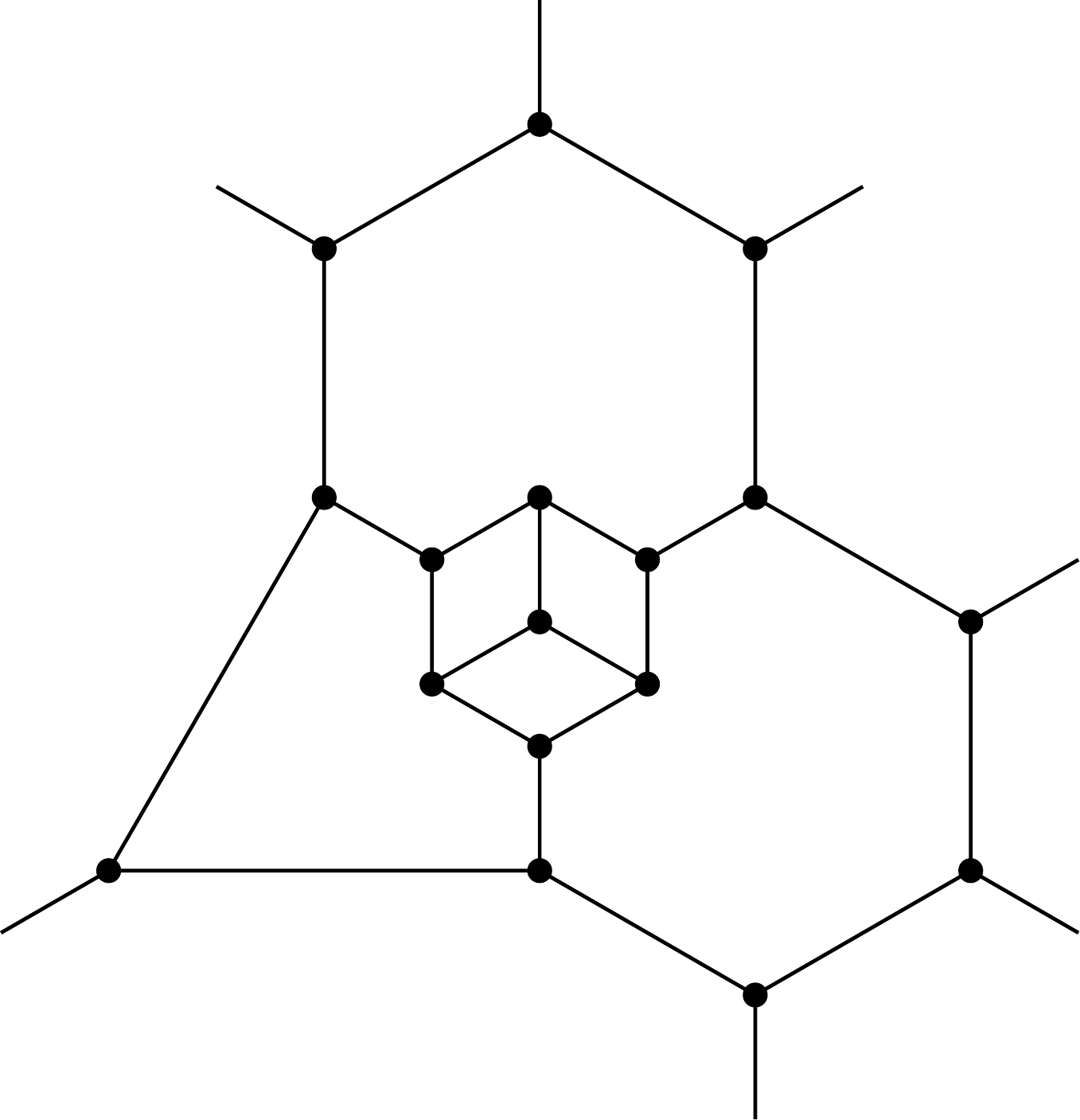}}
\subcaptionbox{Substitution\label{f:13cbsub}}[0.24\textwidth]{\includegraphics[width=0.22\textwidth]{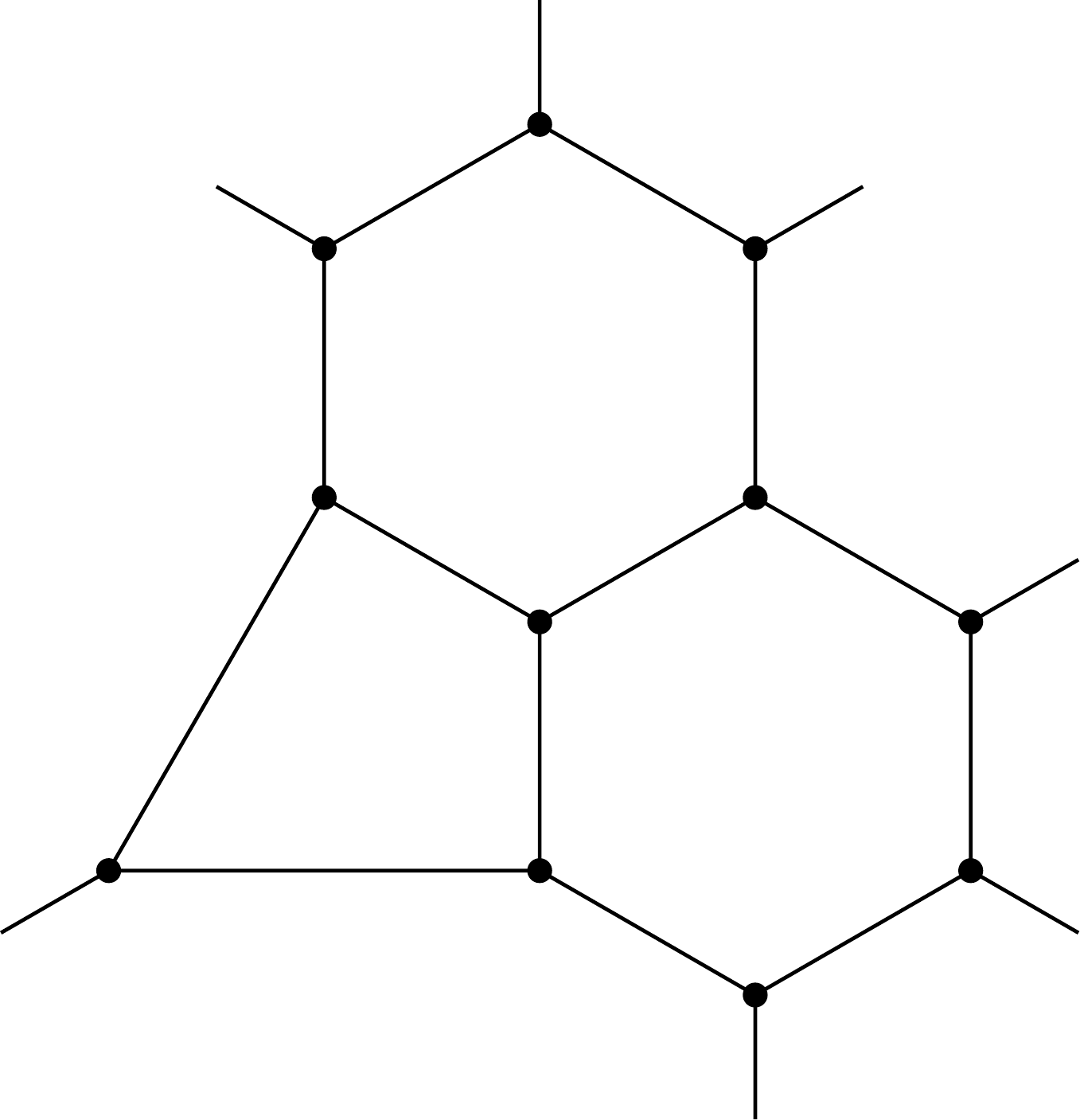}}
}
\captionbox{Graph substitution\label{f:14b}}[0.49\textwidth]{
\subcaptionbox{Neighbourhood\label{f:o14b}}[0.24\textwidth]{\includegraphics[width=0.22\textwidth]{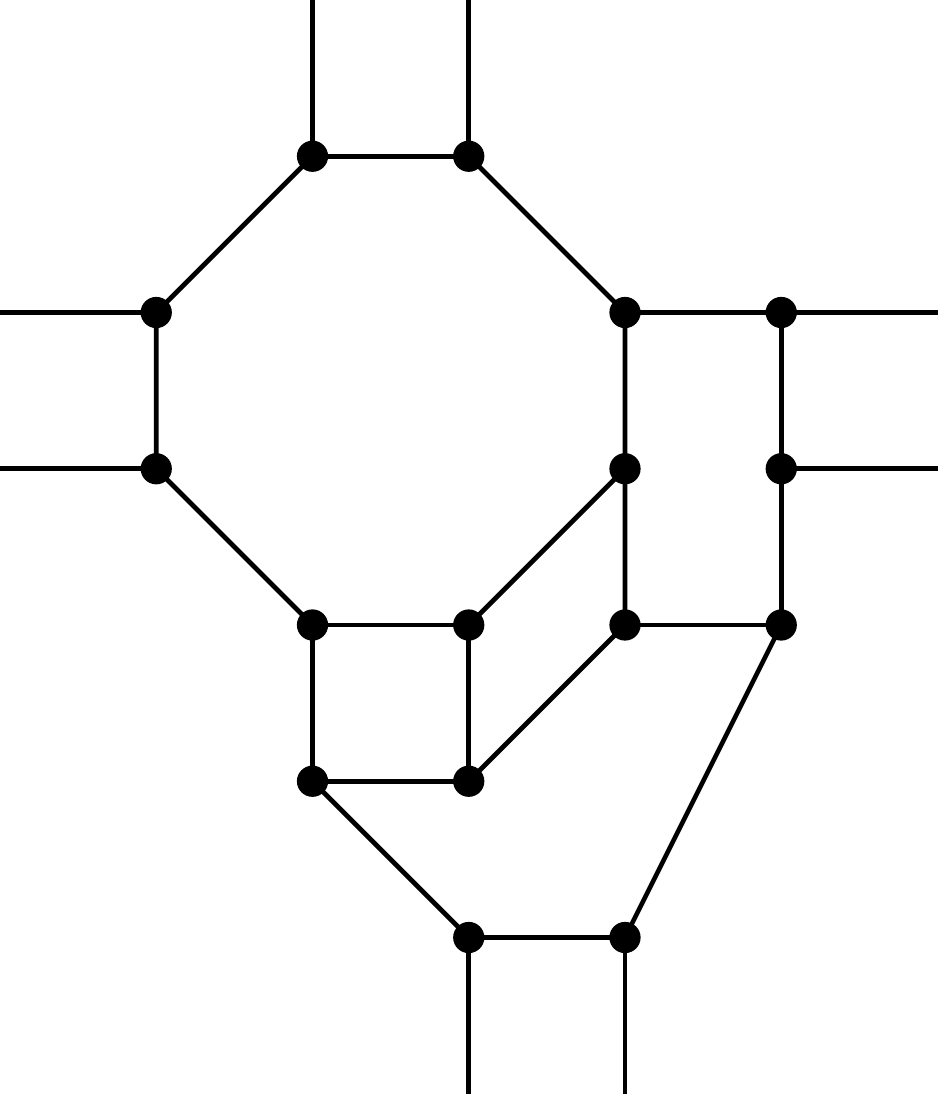}}
\subcaptionbox{Substitution\label{f:14bsub}}[0.24\textwidth]{\includegraphics[width=0.22\textwidth]{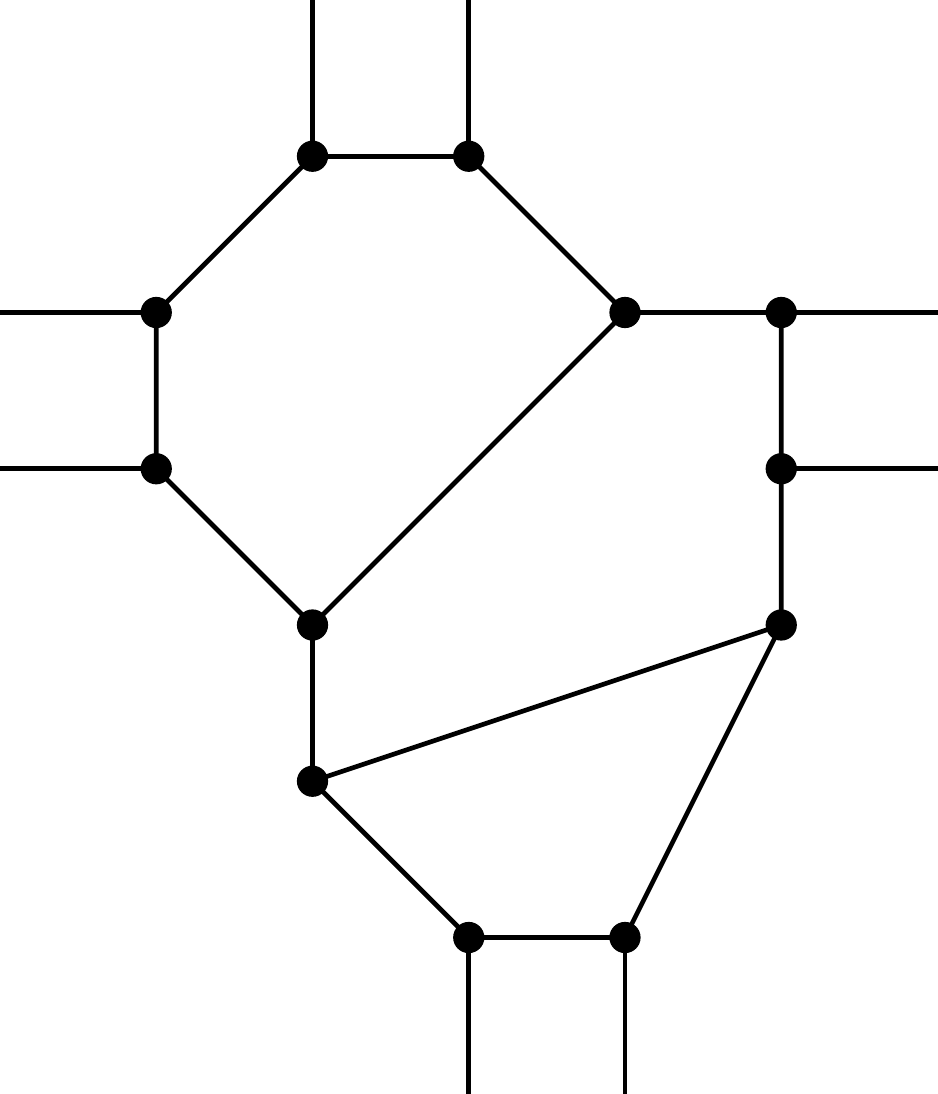}}
}
\end{figure}
\begin{figure}[hp]
\centering
\captionbox{Graph substitution\label{f:1b}}[0.49\textwidth]{
\subcaptionbox{Neighbourhood\label{f:o1b}}[0.24\textwidth]{\includegraphics[width=0.22\textwidth]{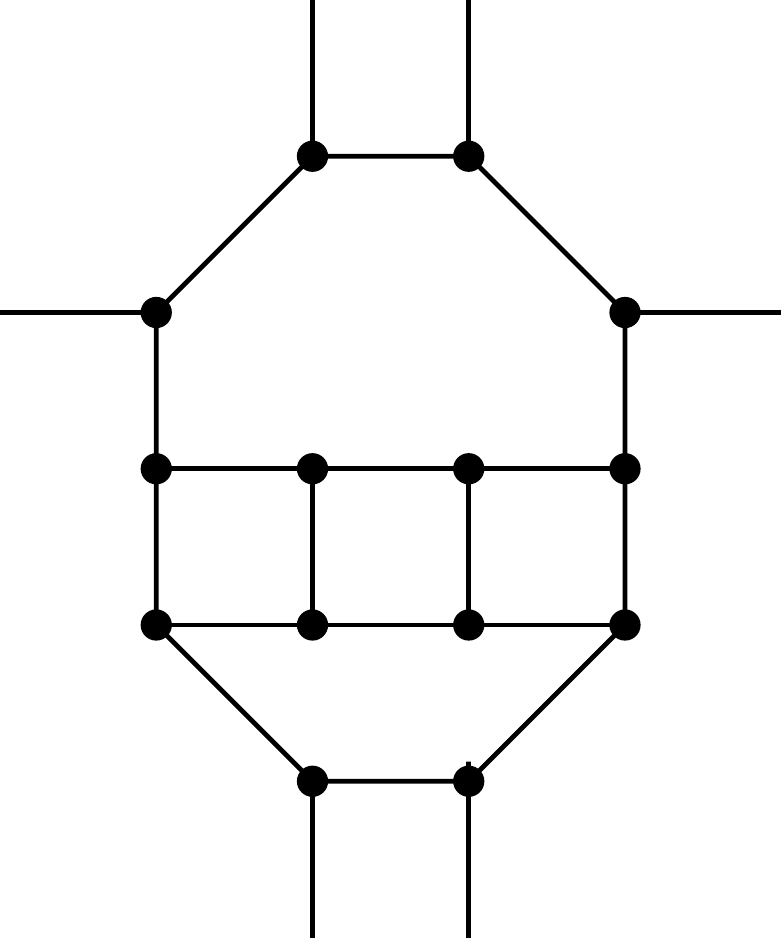}}
\subcaptionbox{Substitution\label{f:1bsub}}[0.24\textwidth]{\includegraphics[width=0.22\textwidth]{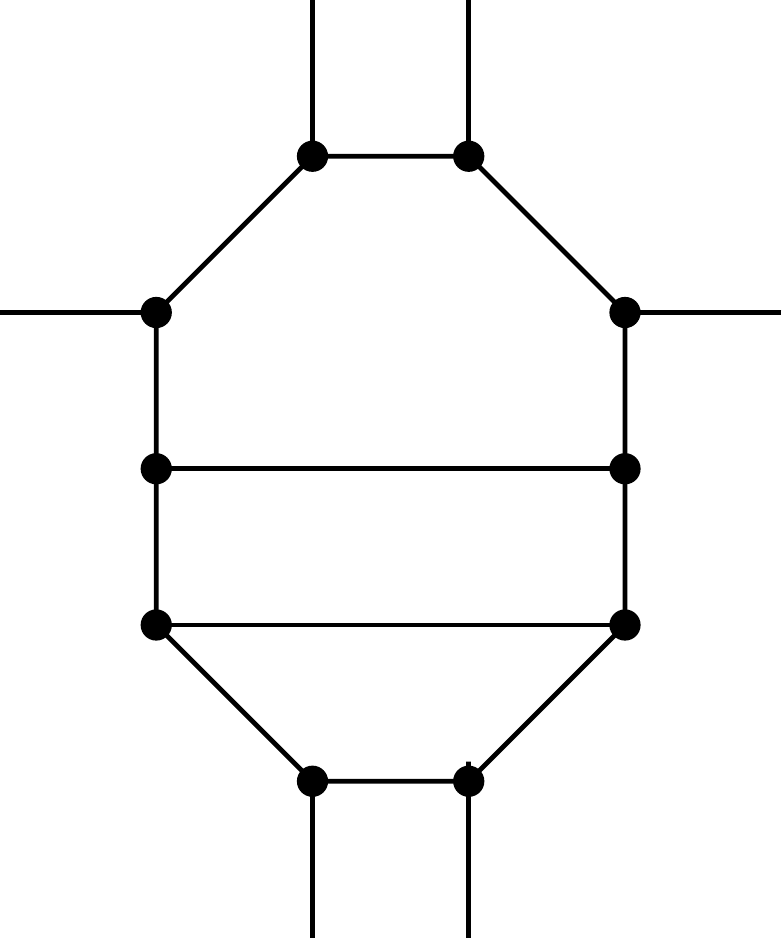}}
}
\captionbox{Graph substitution\label{f:1cbc}}[0.49\textwidth]{
\subcaptionbox{Neighbourhood\label{f:o1cbc}}[0.24\textwidth]{\includegraphics[width=0.22\textwidth]{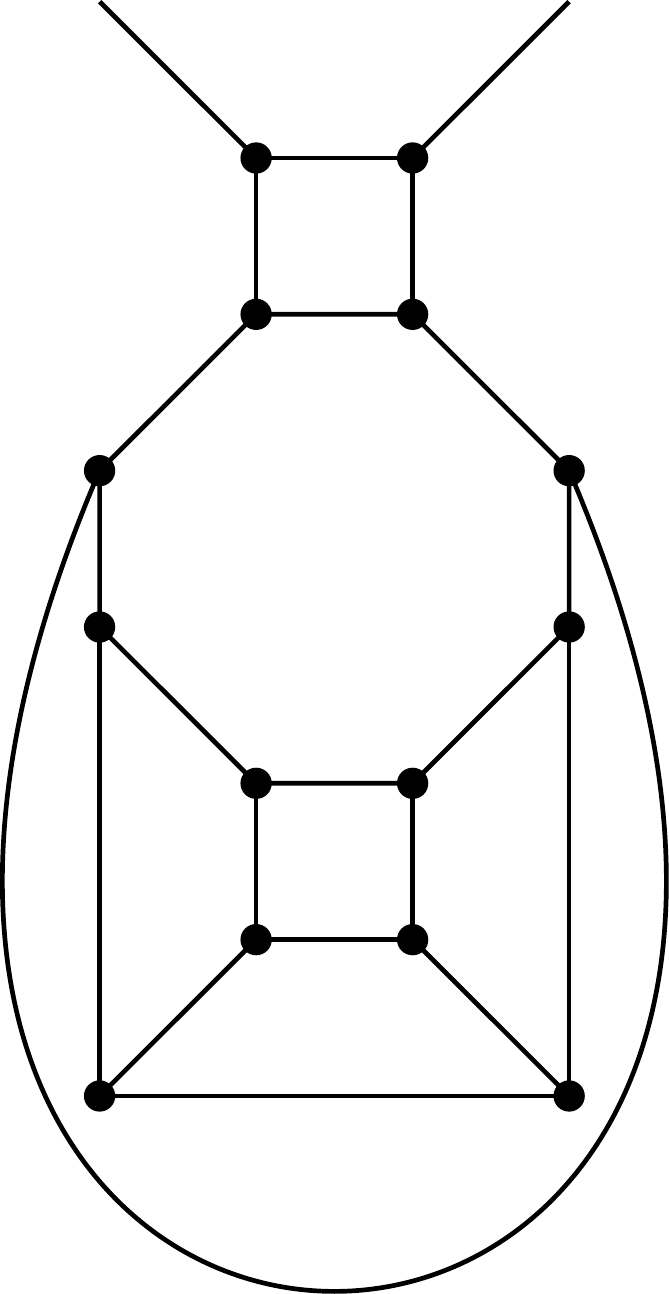}}
\subcaptionbox{Substitution\label{f:1cbcsub}}[0.24\textwidth]{\includegraphics[width=0.22\textwidth]{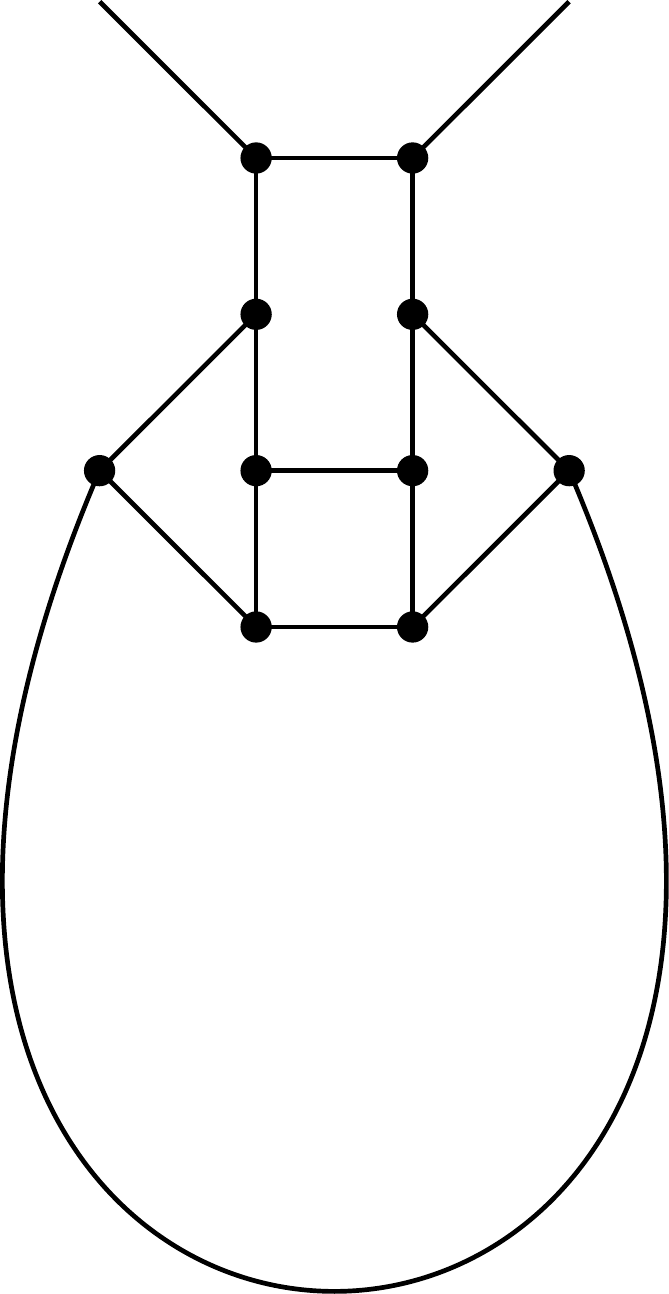}}
}
\end{figure}
\begin{figure}[hp]
\centering
\captionbox{Graph substitution\label{f:1cbbb}}[0.49\textwidth]{
\subcaptionbox{Neighbourhood\label{f:o1cbbb}}[0.24\textwidth]{\includegraphics[width=0.22\textwidth]{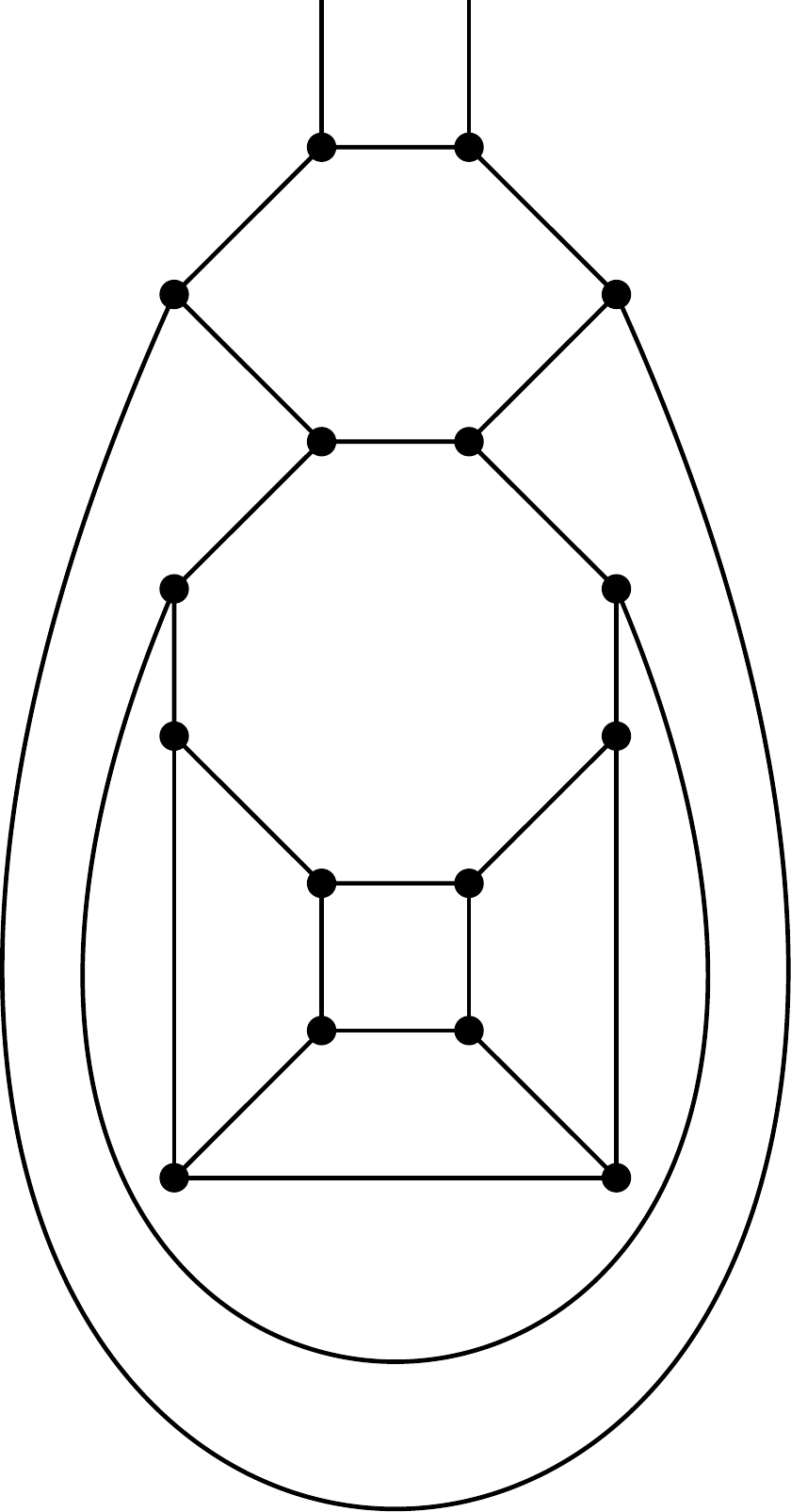}}
\subcaptionbox{Substitution\label{f:1cbbbsub}}[0.24\textwidth]{\includegraphics[width=0.22\textwidth]{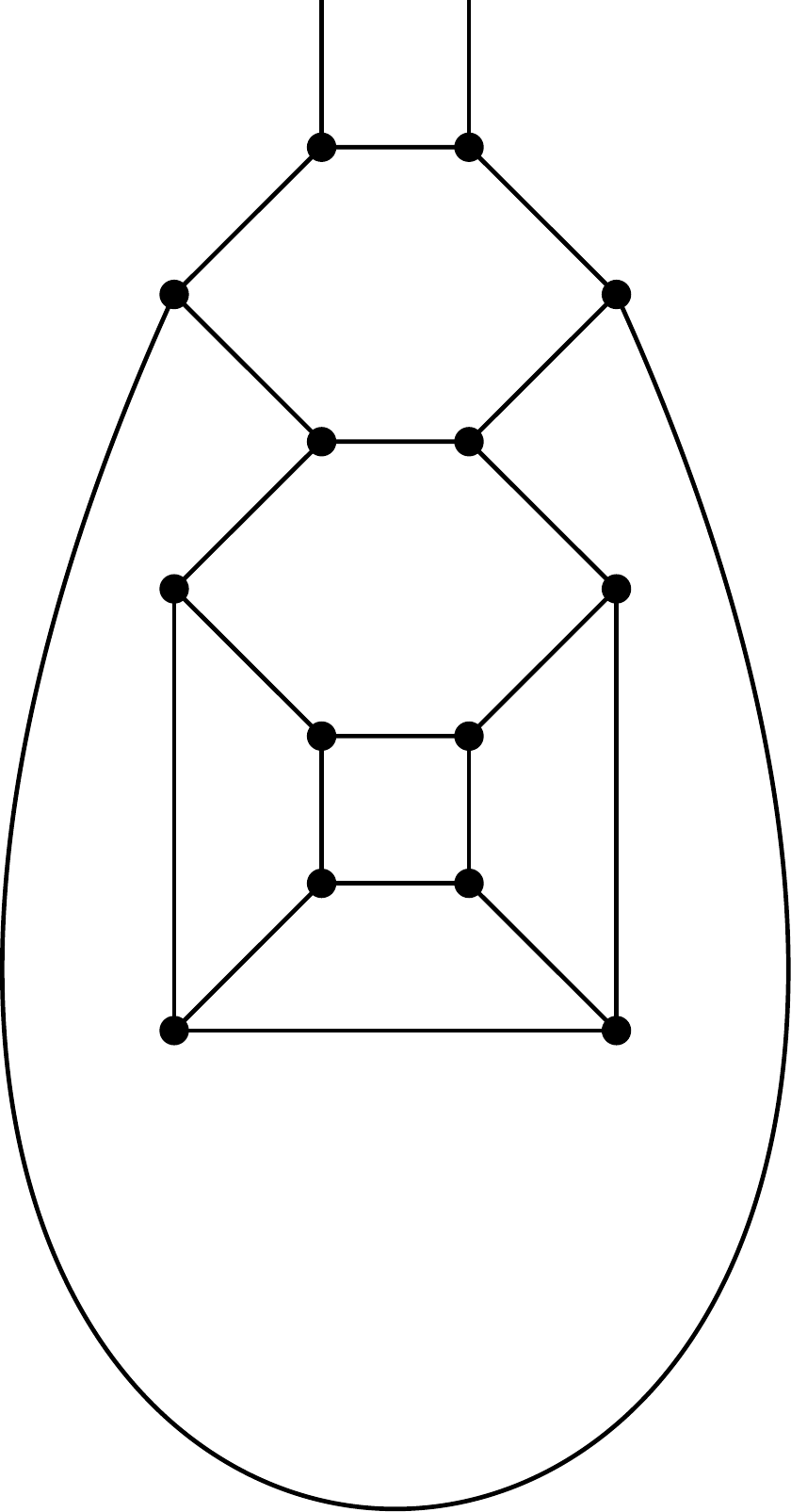}}
}
\captionbox{Graph substitution\label{f:1cbba}}[0.49\textwidth]{
\subcaptionbox{Neighbourhood\label{f:o1cbba}}[0.24\textwidth]{\includegraphics[width=0.22\textwidth]{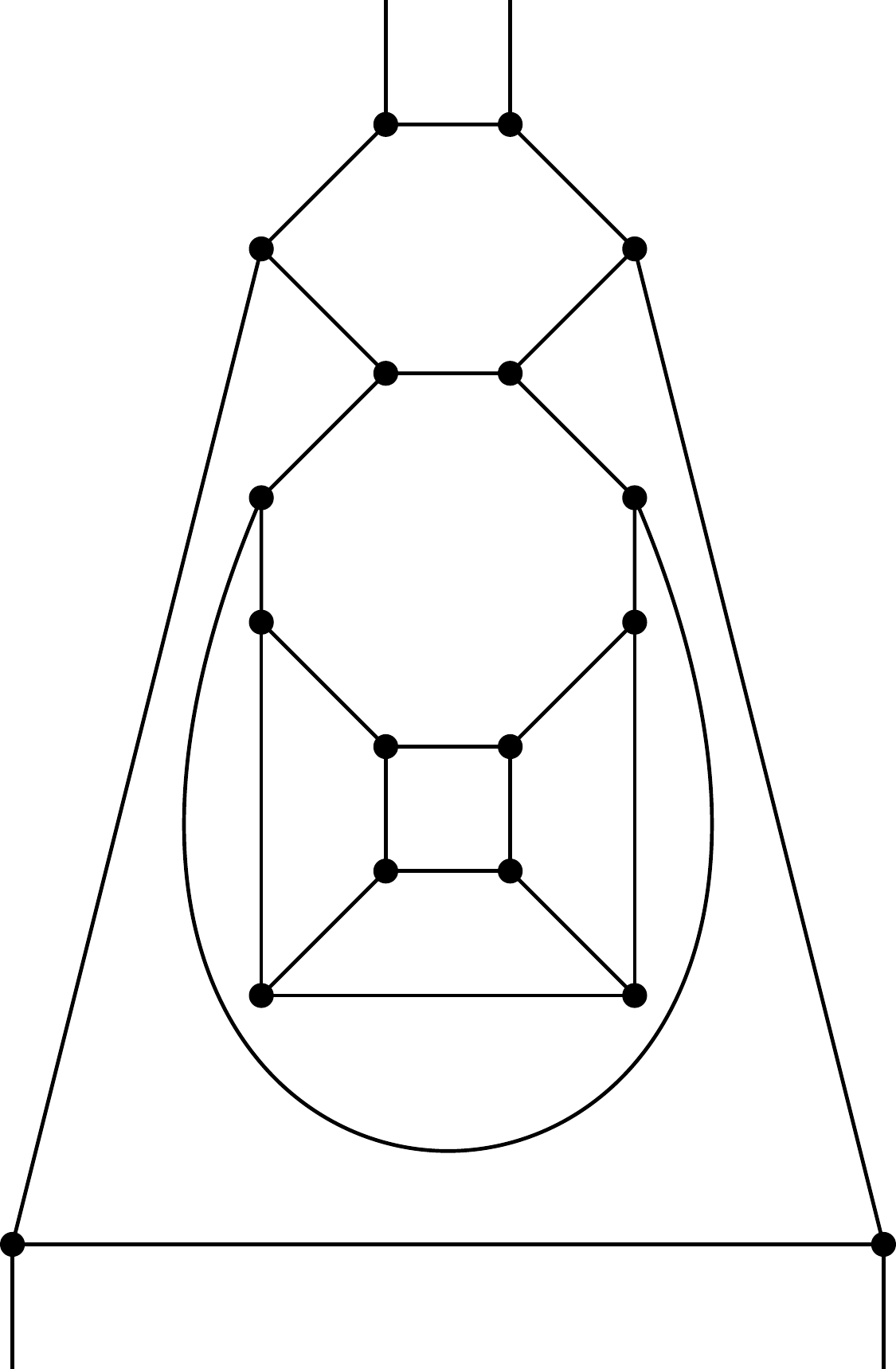}}
\subcaptionbox{Substitution\label{f:1cbbasub}}[0.24\textwidth]{\includegraphics[width=0.22\textwidth]{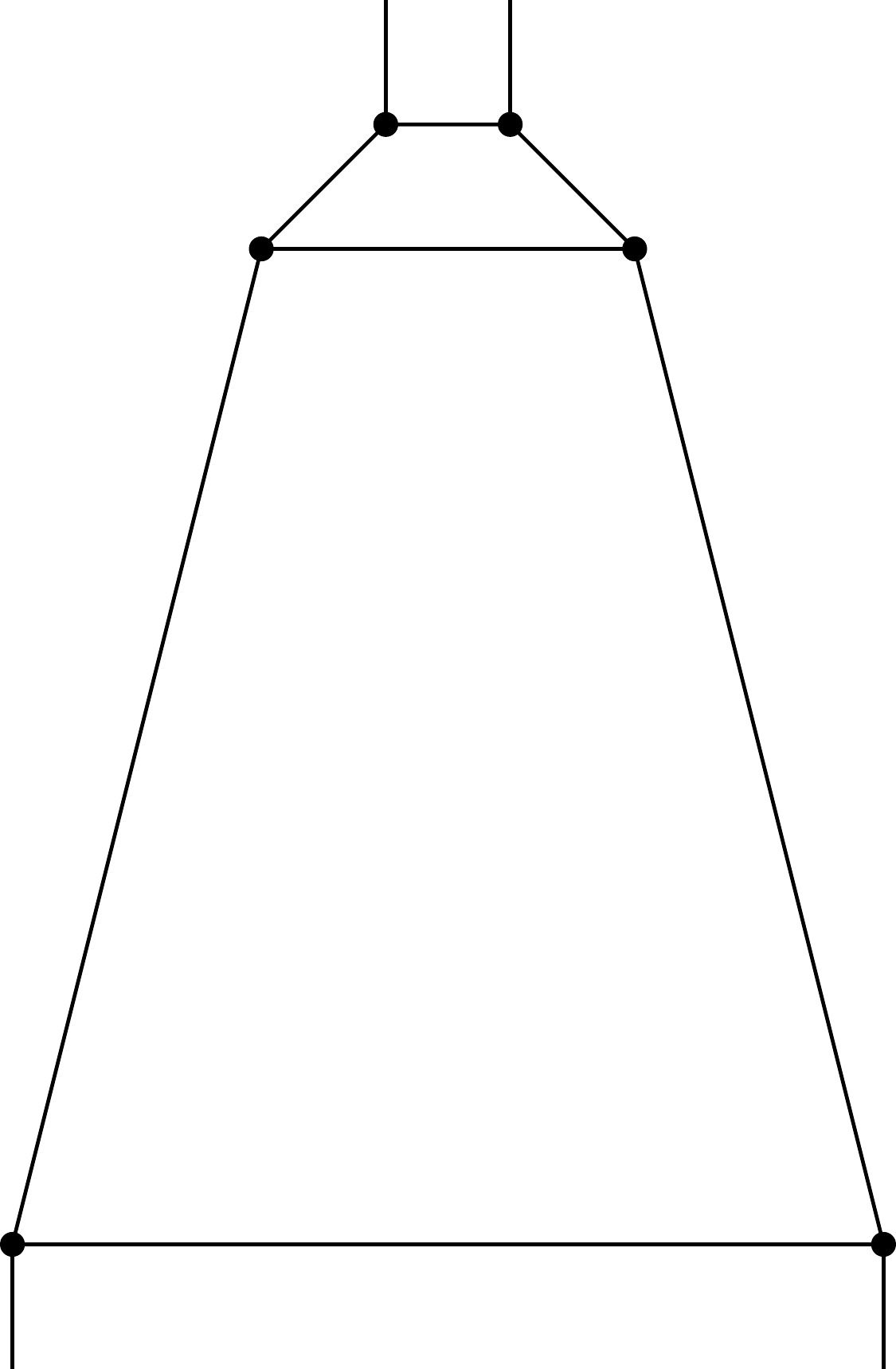}}
}
\end{figure}
\begin{figure}[hp]
\centering
\captionbox{Graph substitution\label{f:1cbab}}[0.49\textwidth]{
\subcaptionbox{Neighbourhood\label{f:o1cbab}}[0.24\textwidth]{\includegraphics[width=0.22\textwidth]{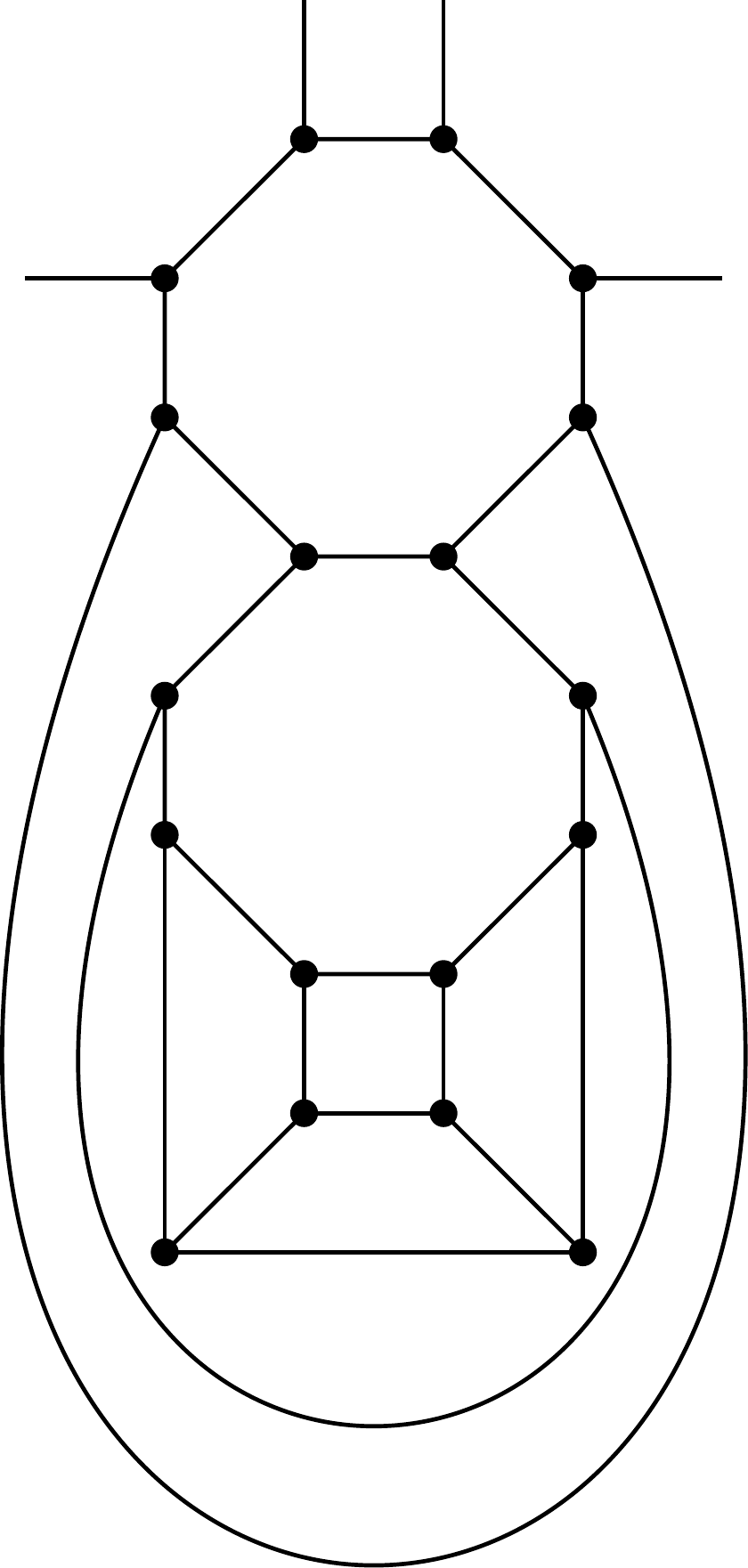}}
\subcaptionbox{Substitution\label{f:1cbabsub}}[0.24\textwidth]{\includegraphics[width=0.22\textwidth]{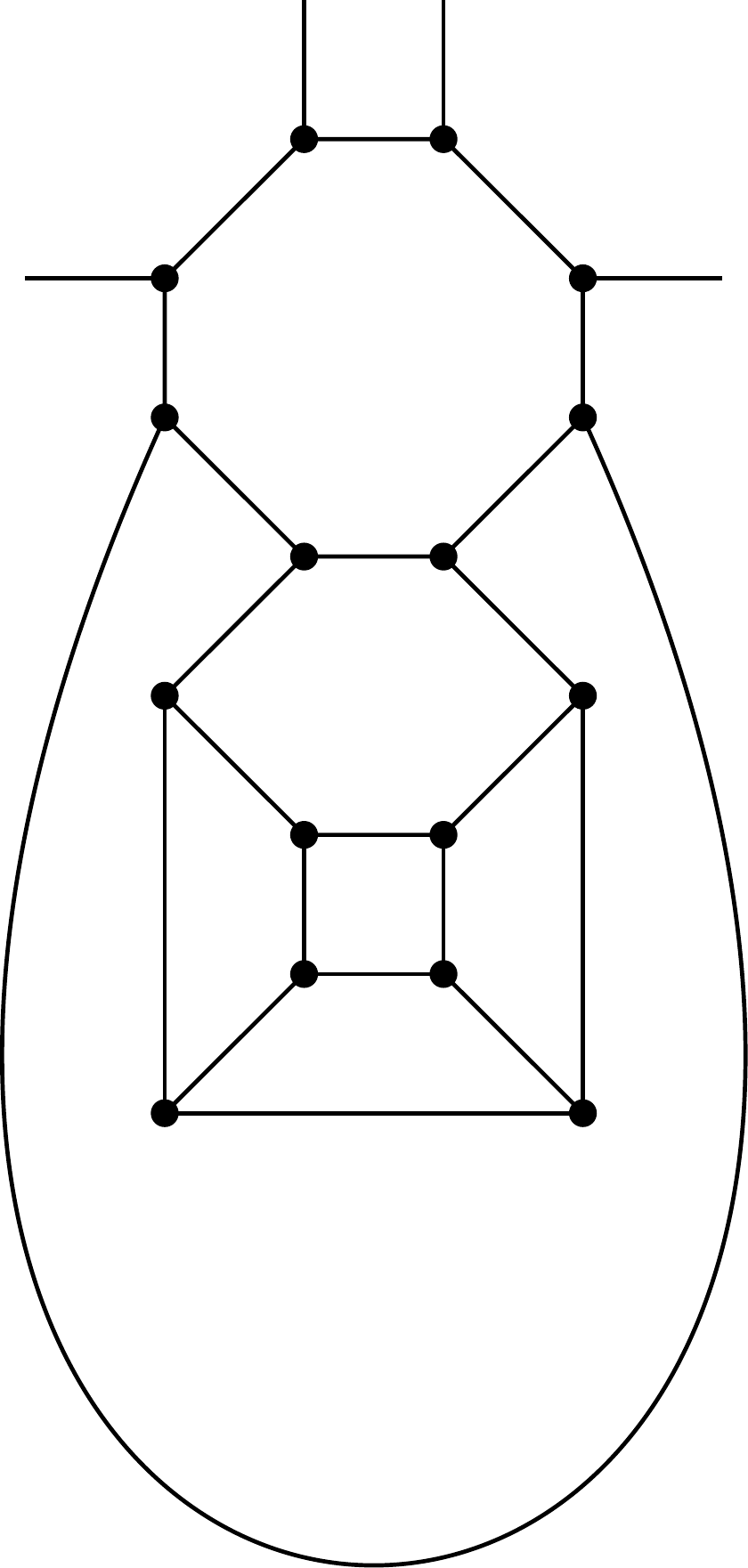}}
}
\captionbox{Graph substitution\label{f:1cbaa}}[0.49\textwidth]{
\subcaptionbox{Neighbourhood\label{f:o1cbaa}}[0.24\textwidth]{\includegraphics[width=0.22\textwidth]{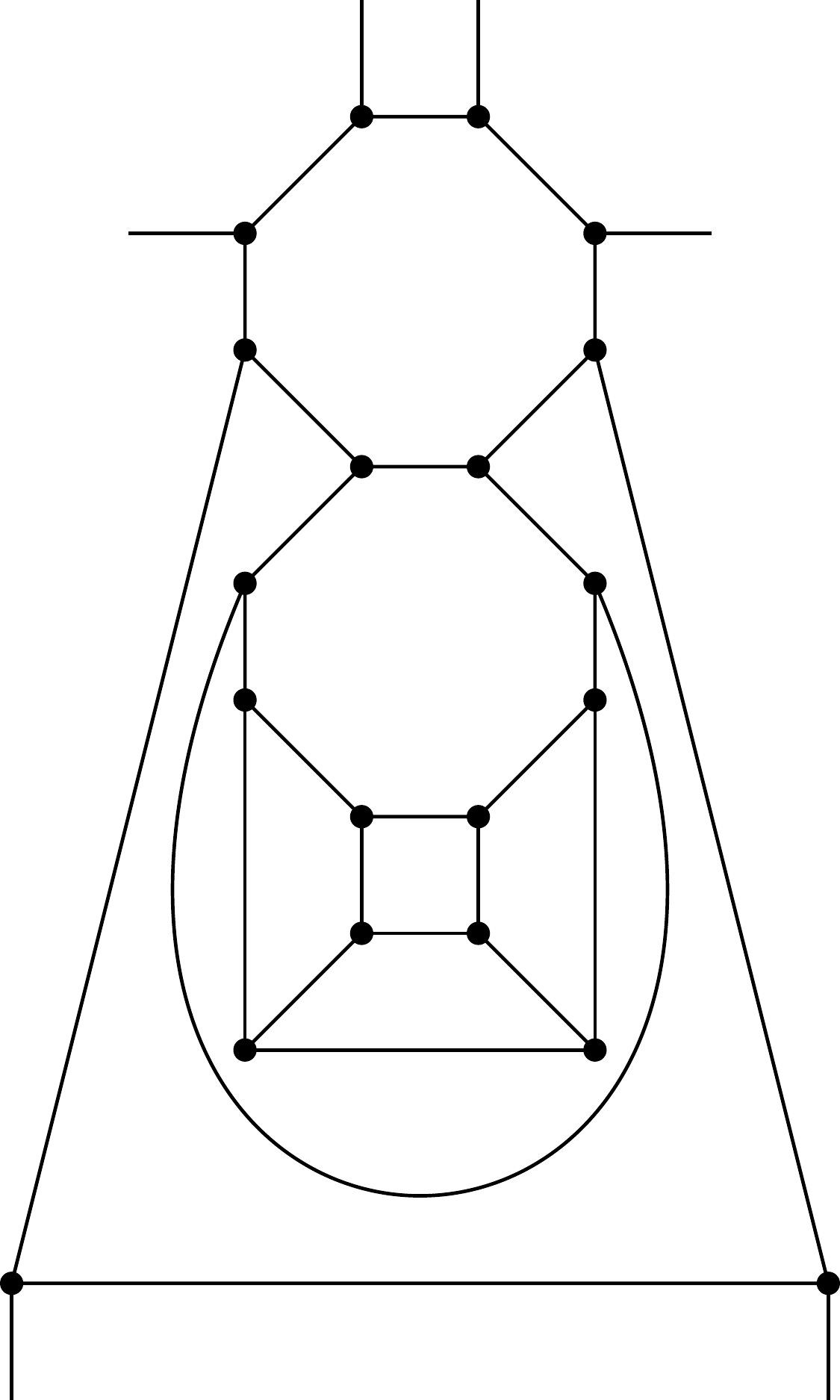}}
\subcaptionbox{Substitution\label{f:1cbaasub}}[0.24\textwidth]{\includegraphics[width=0.22\textwidth]{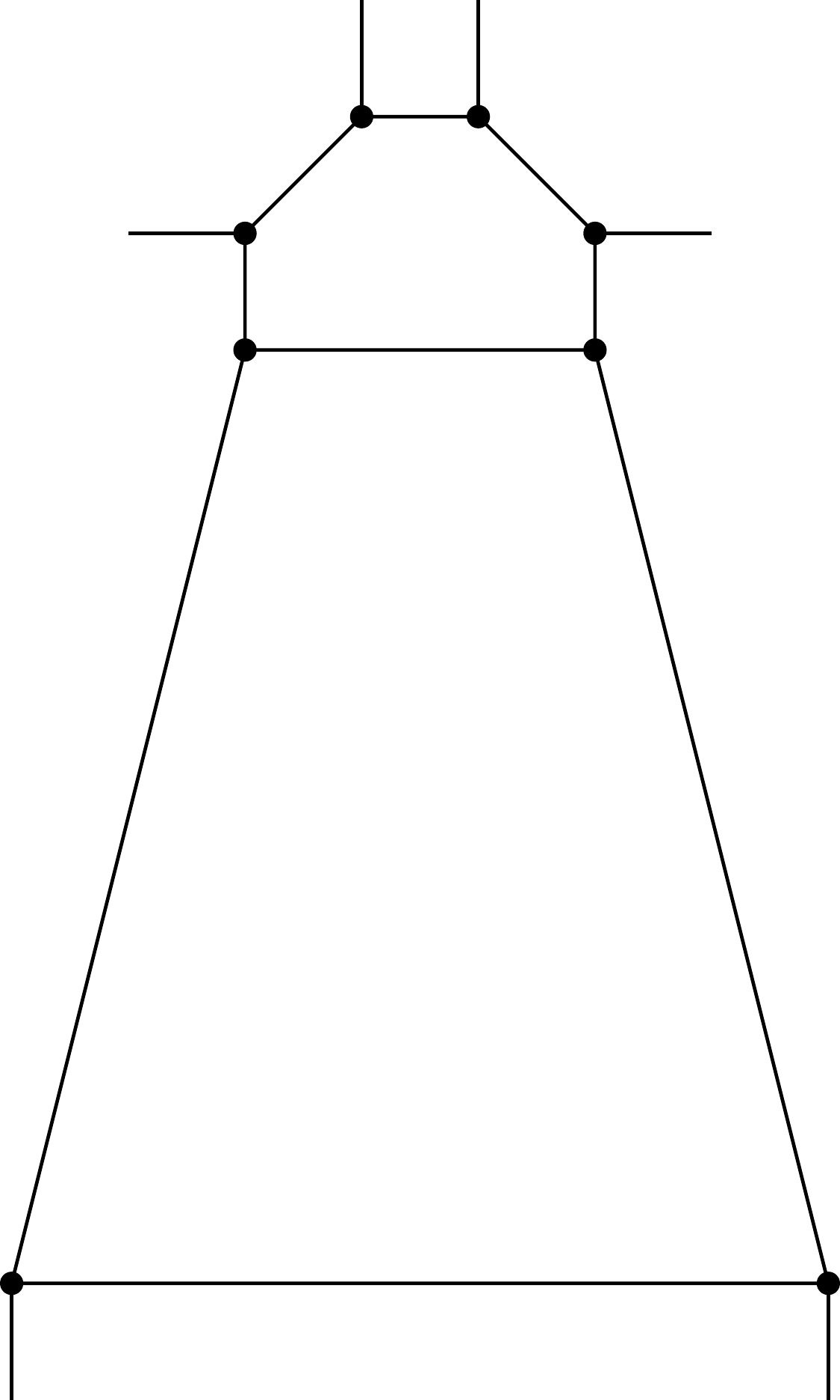}}
}
\end{figure}
\begin{figure}[hp]
\centering
\captionbox{Graph substitution\label{f:1ca}}[0.49\textwidth]{
\subcaptionbox{Neighbourhood\label{f:o1ca}}[0.24\textwidth]{\includegraphics[width=0.22\textwidth]{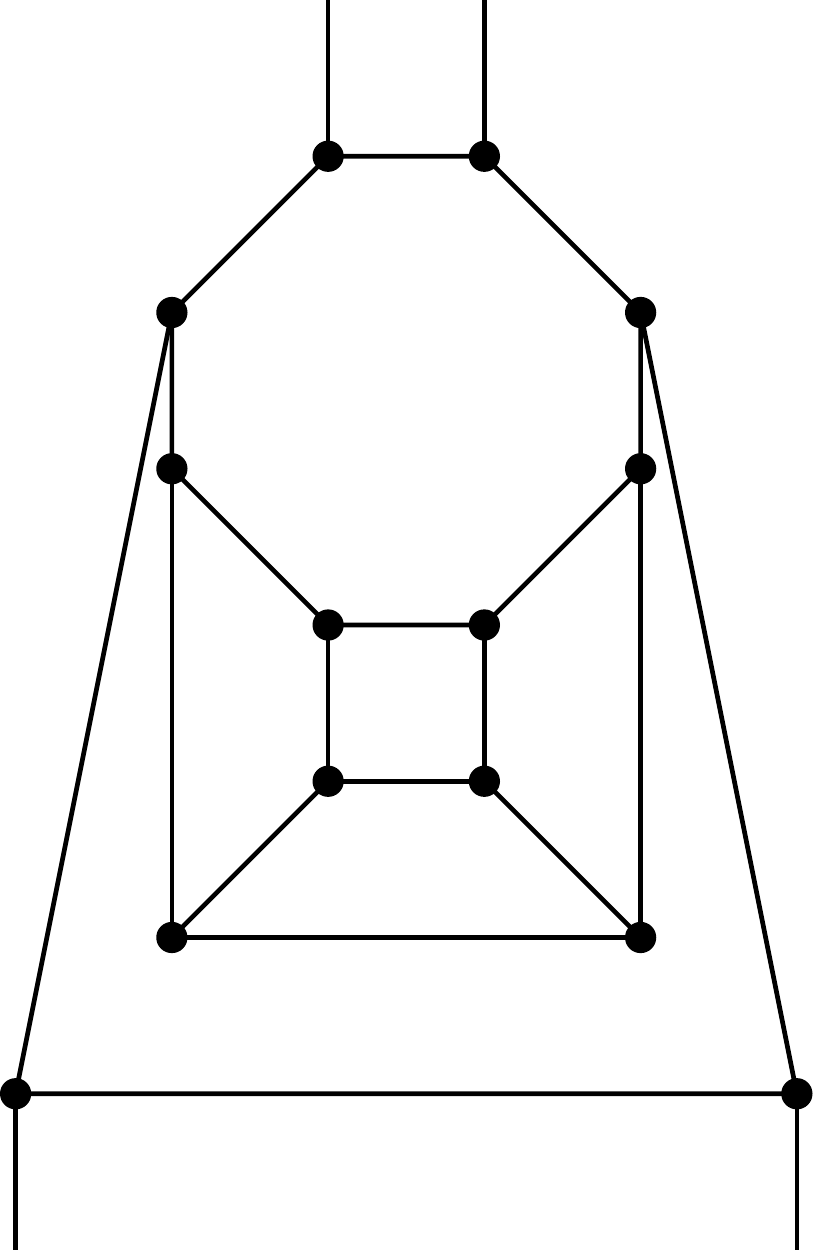}}
\subcaptionbox{Substitution\label{f:1casub}}[0.24\textwidth]{\includegraphics[width=0.22\textwidth]{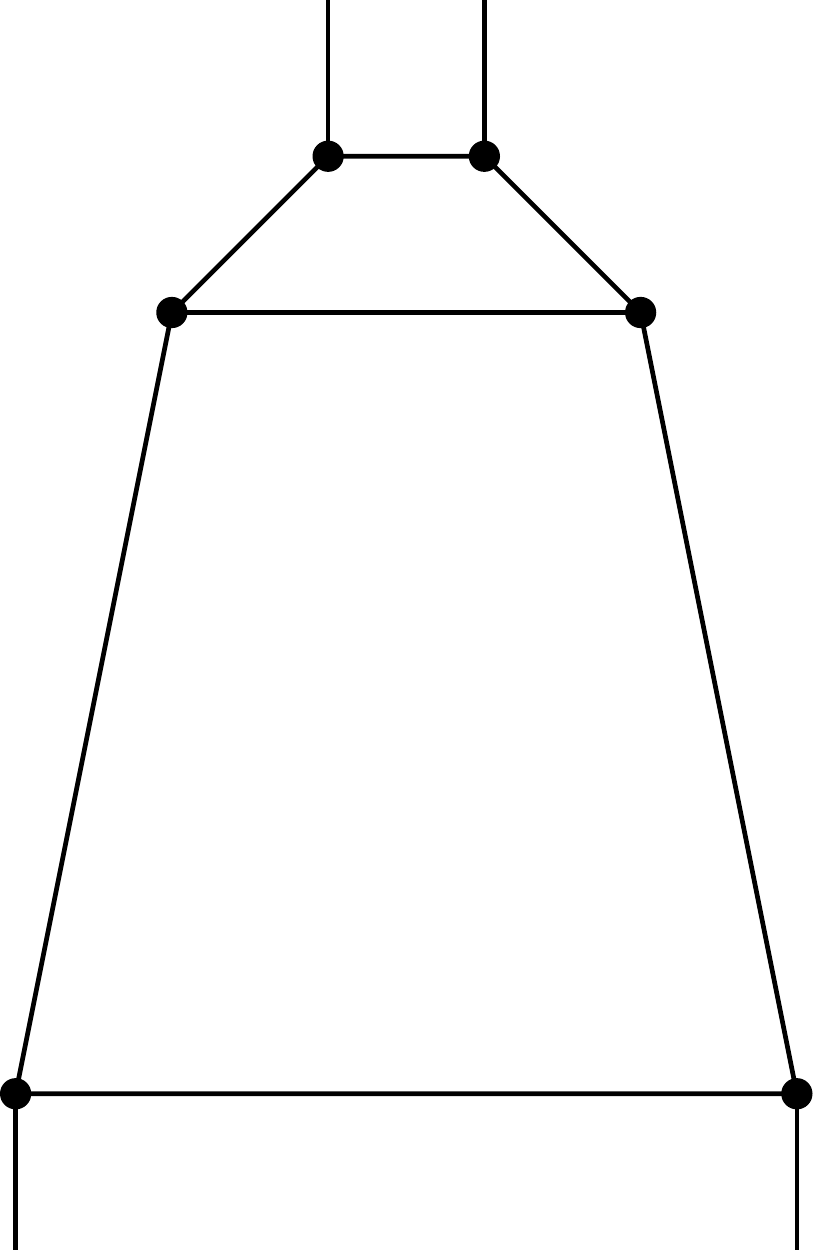}}
}
\captionbox{Graph substitution\label{f:subtestexample}}[0.49\textwidth]{
\subcaptionbox{Neighbourhood\label{f:osubtestexample}}[0.24\textwidth]{\includegraphics[width=0.22\textwidth]{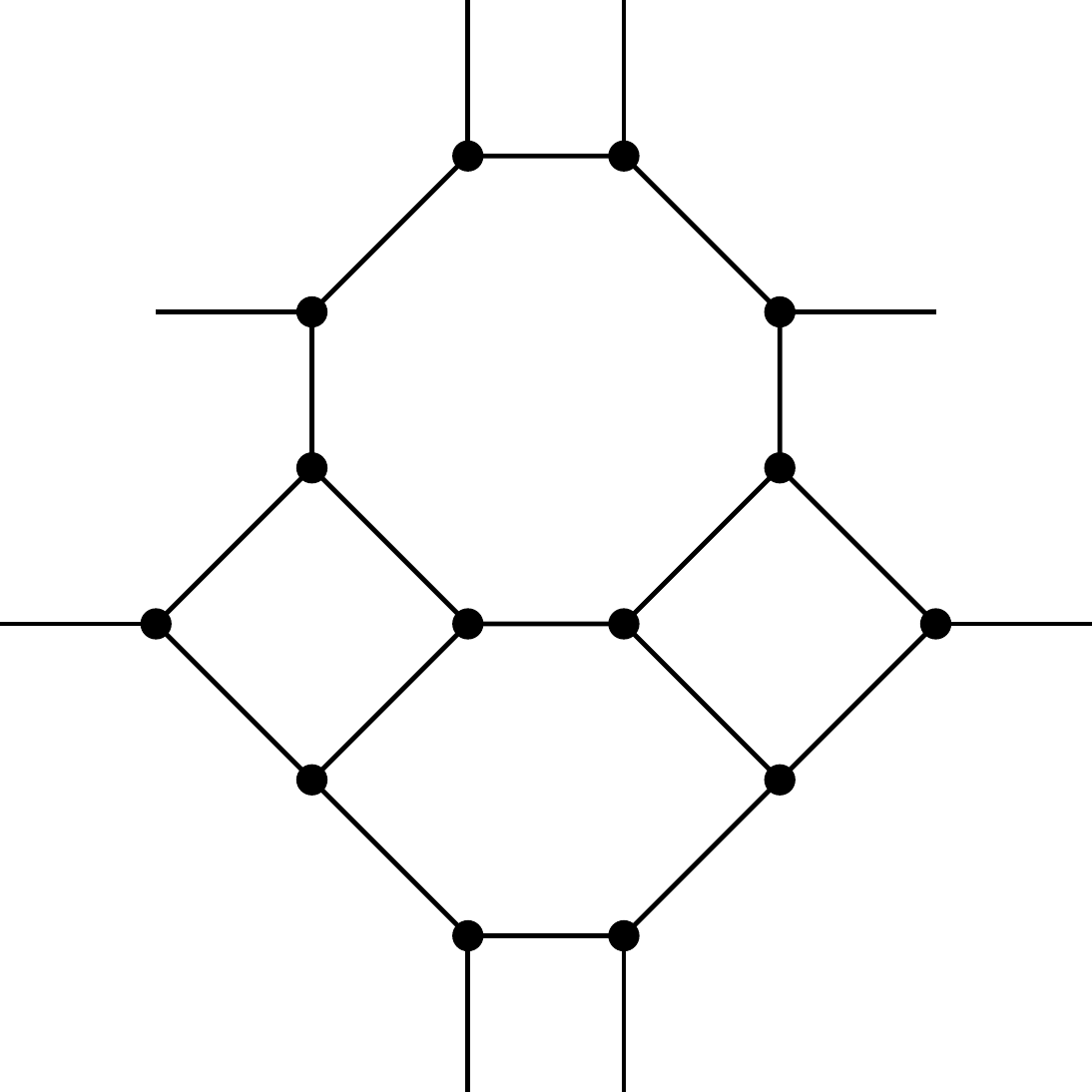}}
\subcaptionbox{Substitution\label{f:subtestexamplesub}}[0.24\textwidth]{\includegraphics[width=0.22\textwidth]{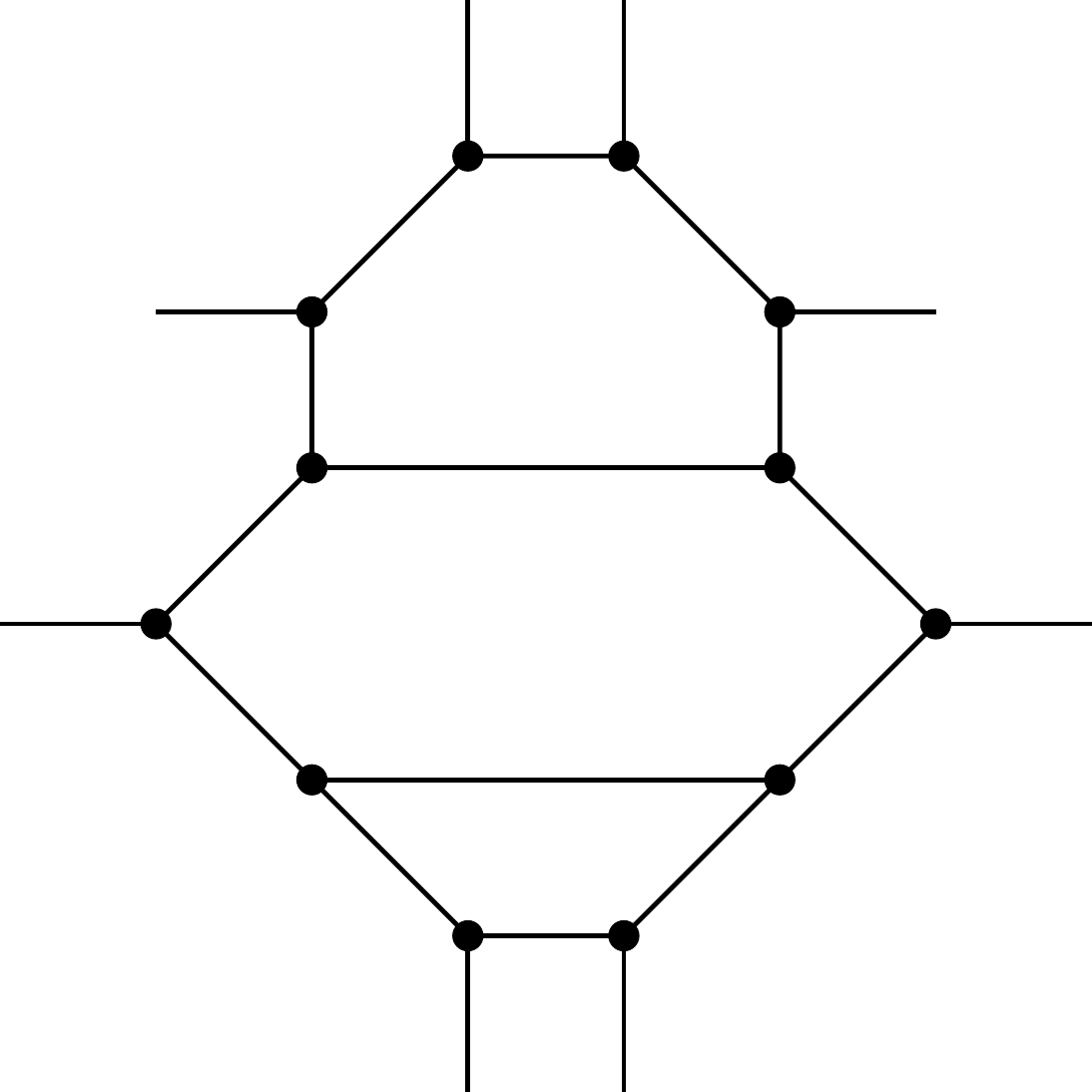}}
}
\end{figure}
\begin{figure}[hp]
\centering
\captionbox{Graph substitution\label{f:2}}[0.49\textwidth]{
\subcaptionbox{Neighbourhood\label{f:o2}}[0.24\textwidth]{\includegraphics[width=0.22\textwidth]{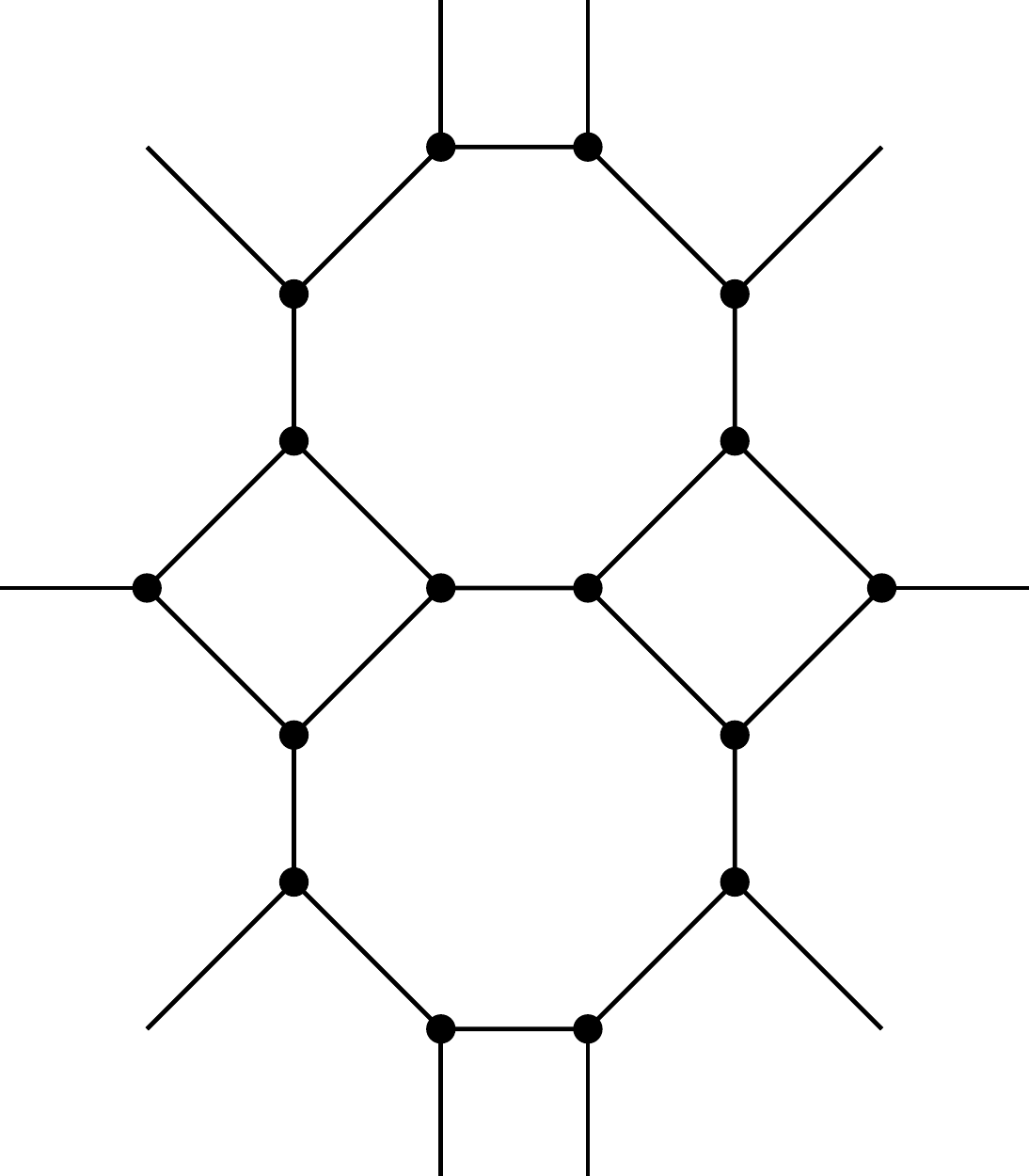}}
\subcaptionbox{Substitution\label{f:2sub}}[0.24\textwidth]{\includegraphics[width=0.22\textwidth]{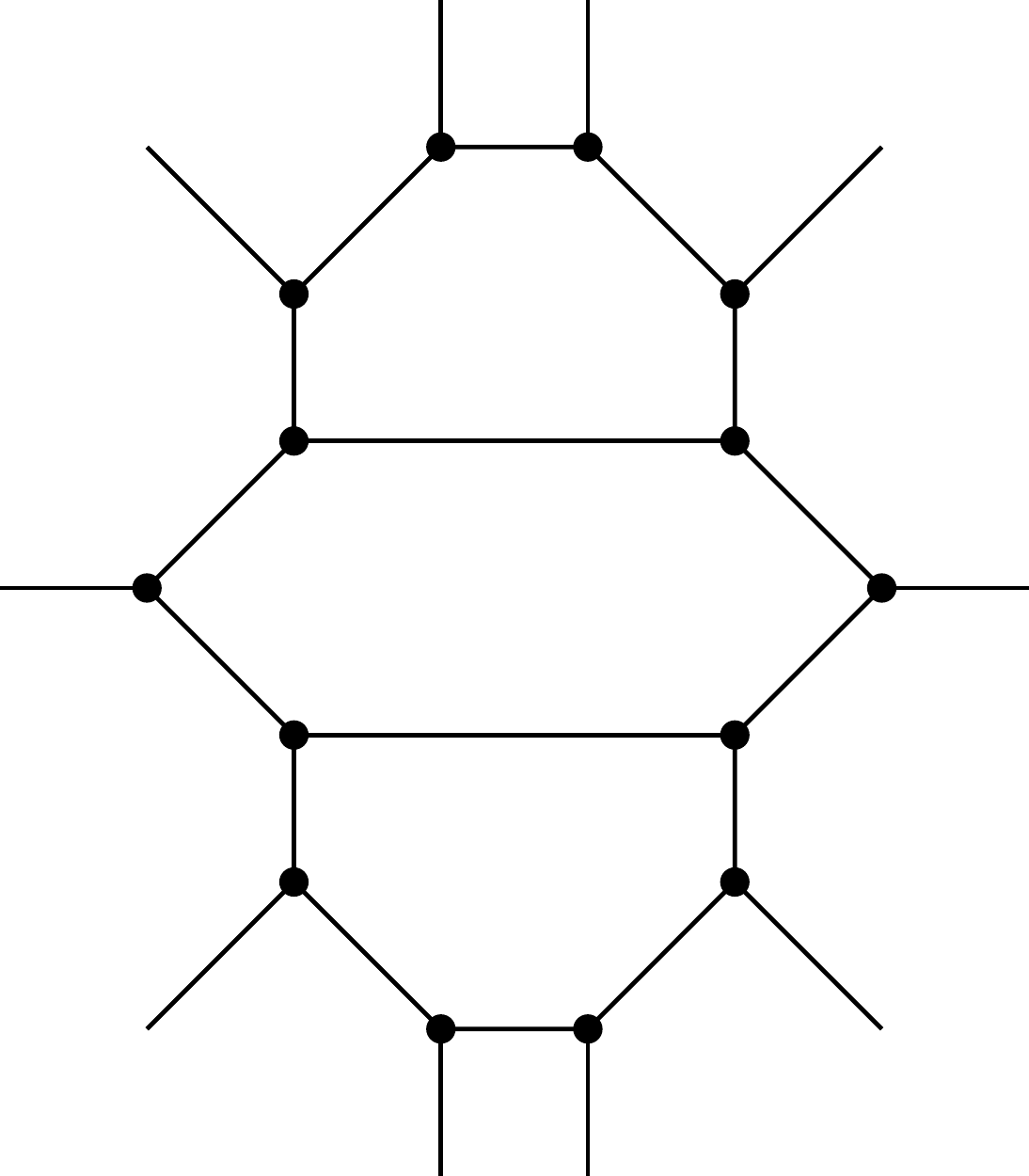}}
}
\captionbox{Graph substitution\label{f:13b}}[0.49\textwidth]{
\subcaptionbox{Neighbourhood\label{f:o13b}}[0.24\textwidth]{\includegraphics[width=0.22\textwidth]{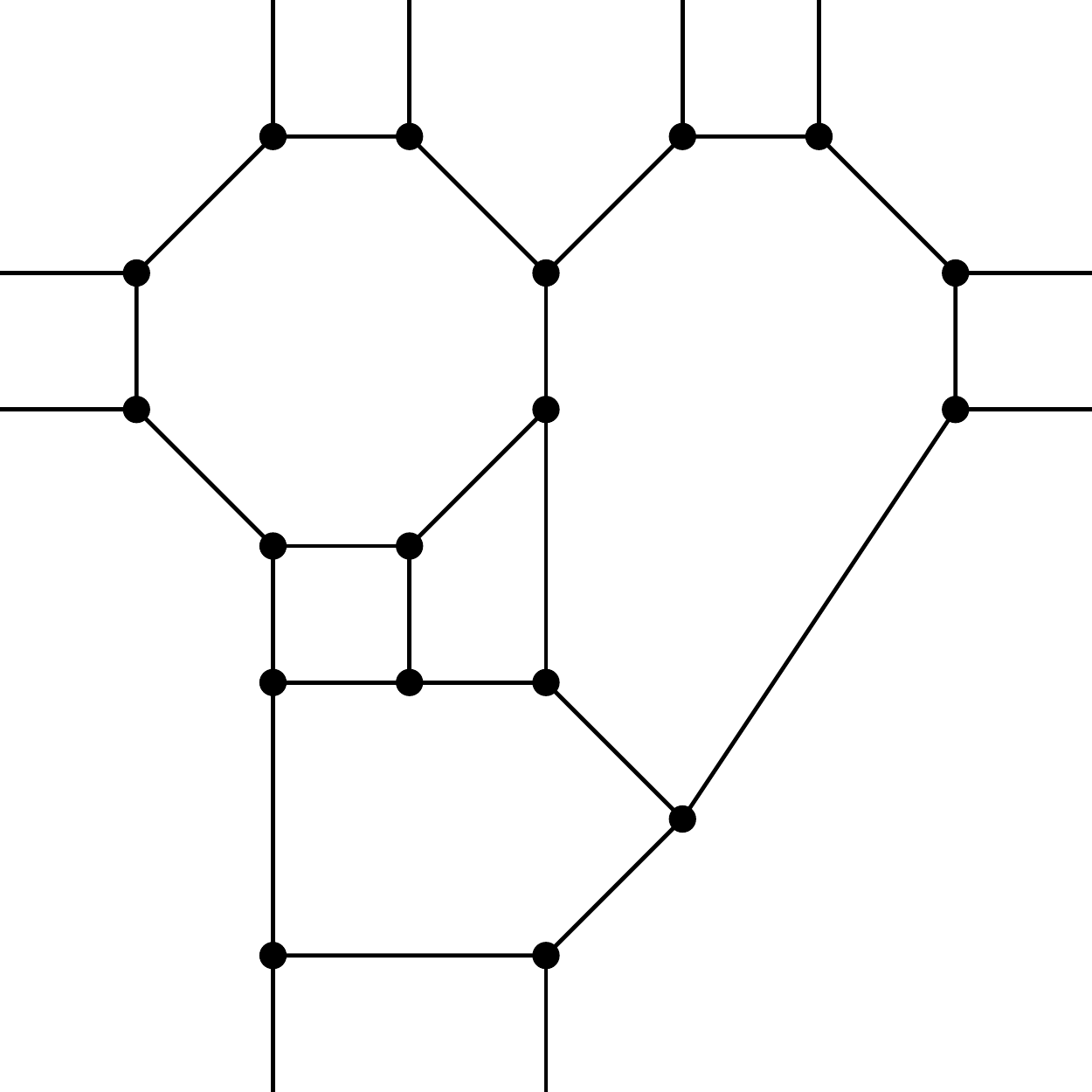}}
\subcaptionbox{Substitution\label{f:13bsub}}[0.24\textwidth]{\includegraphics[width=0.22\textwidth]{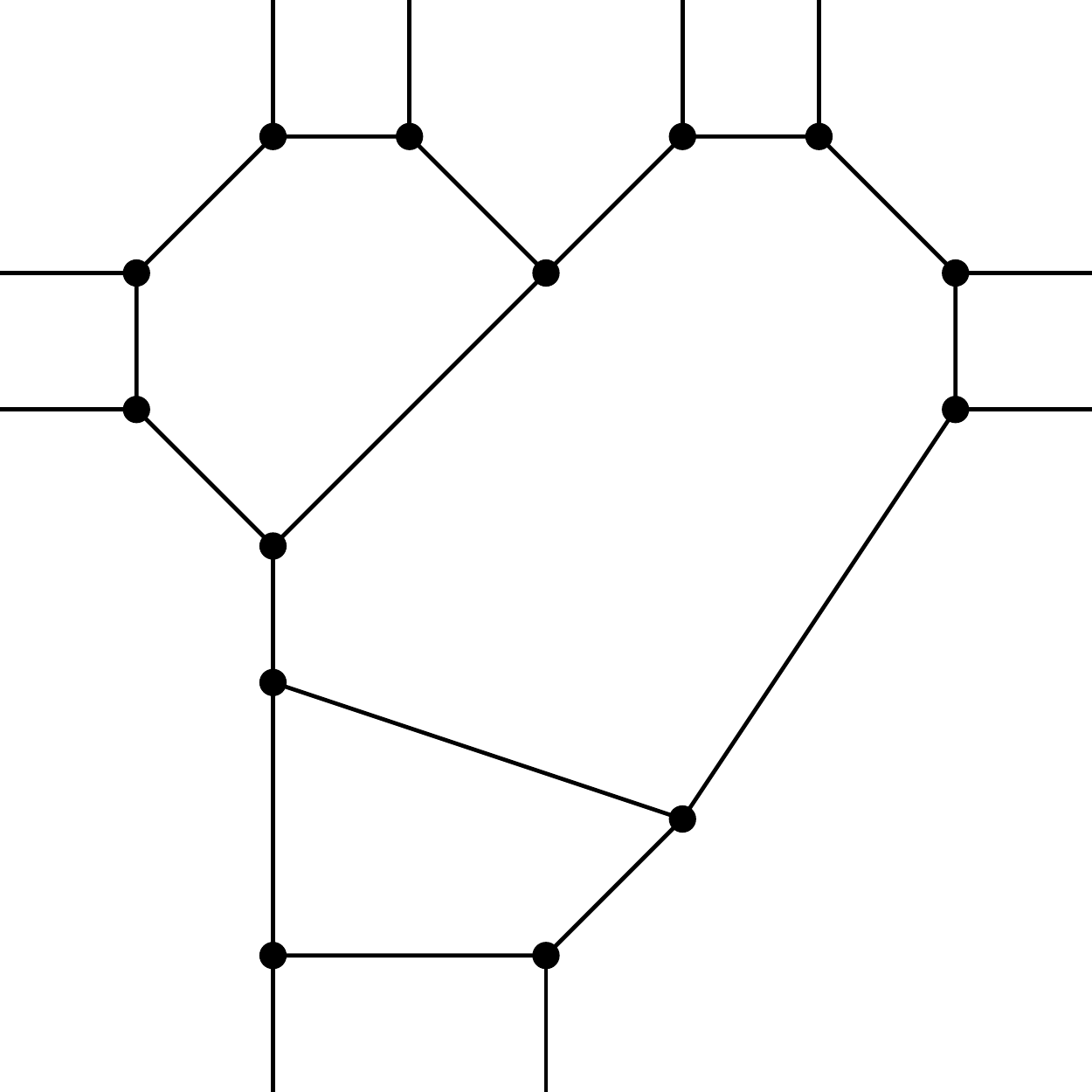}}
}
\end{figure}
\begin{figure}[hp]
\centering
\captionbox{Graph substitution\label{f:1a}}[0.49\textwidth]{
\subcaptionbox{Neighbourhood\label{f:o1a}}[0.24\textwidth]{\includegraphics[width=0.22\textwidth]{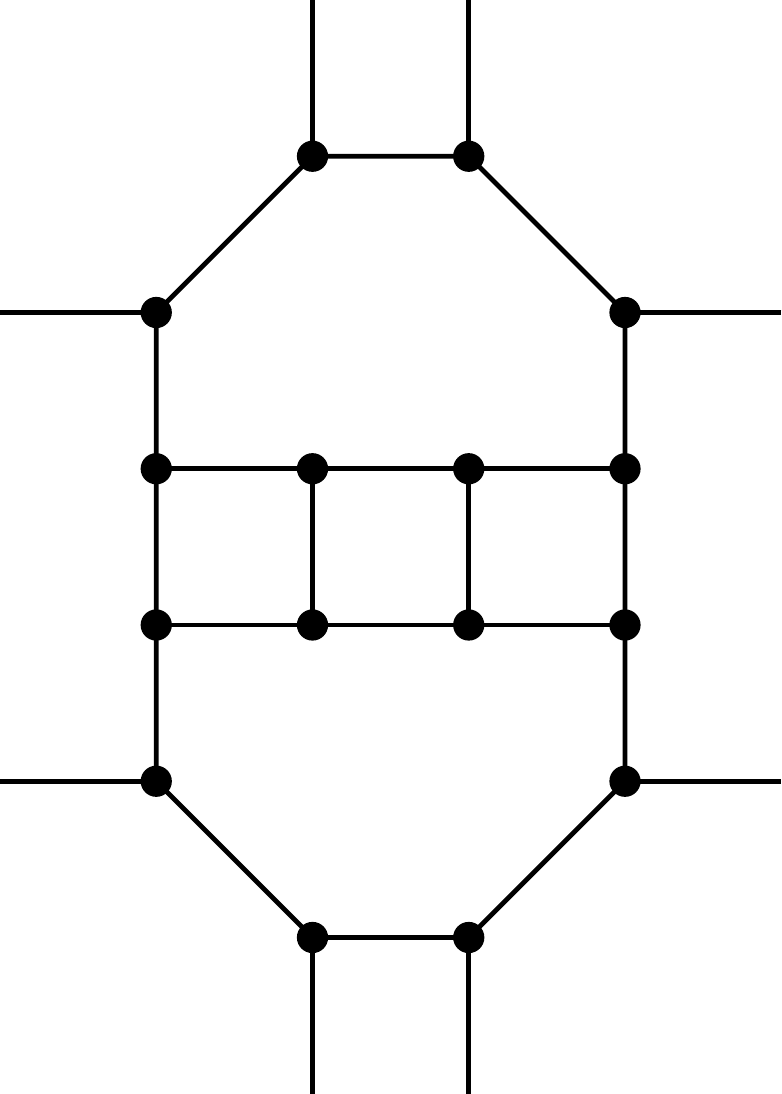}}
\subcaptionbox{Substitution\label{f:1asub}}[0.24\textwidth]{\includegraphics[width=0.22\textwidth]{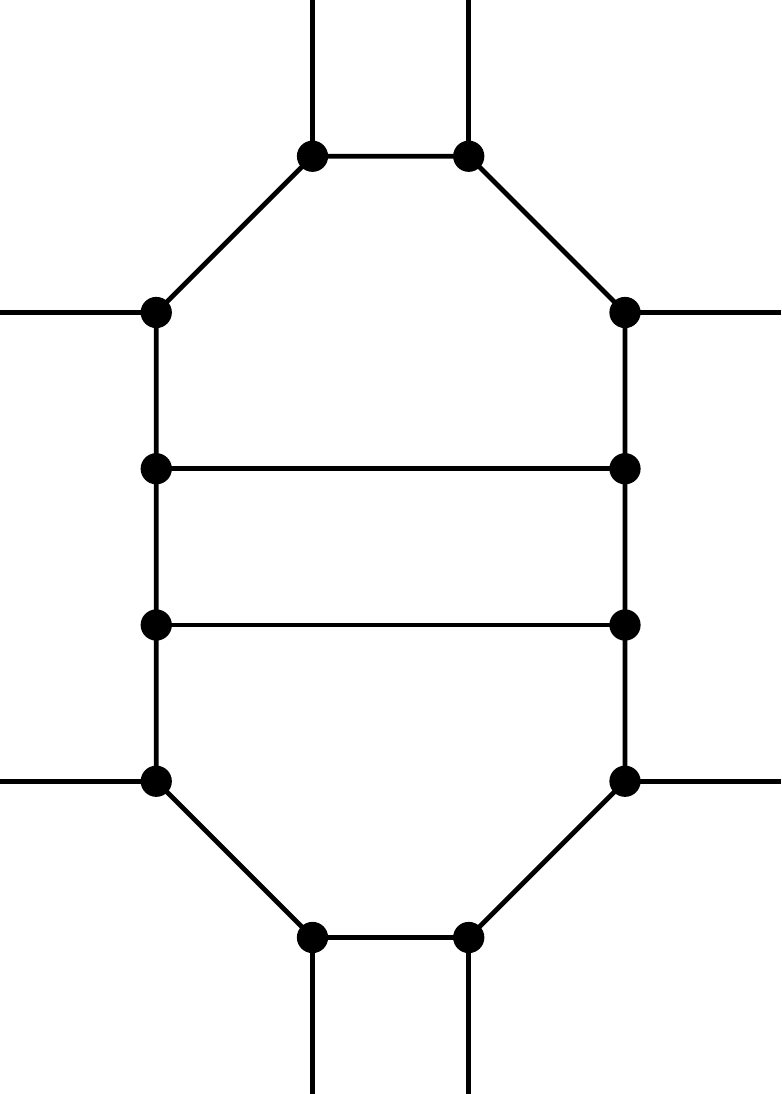}}
}
\captionbox{Graph substitution\label{f:14a}}[0.49\textwidth]{
\subcaptionbox{Neighbourhood\label{f:o14a}}[0.24\textwidth]{\includegraphics[width=0.22\textwidth]{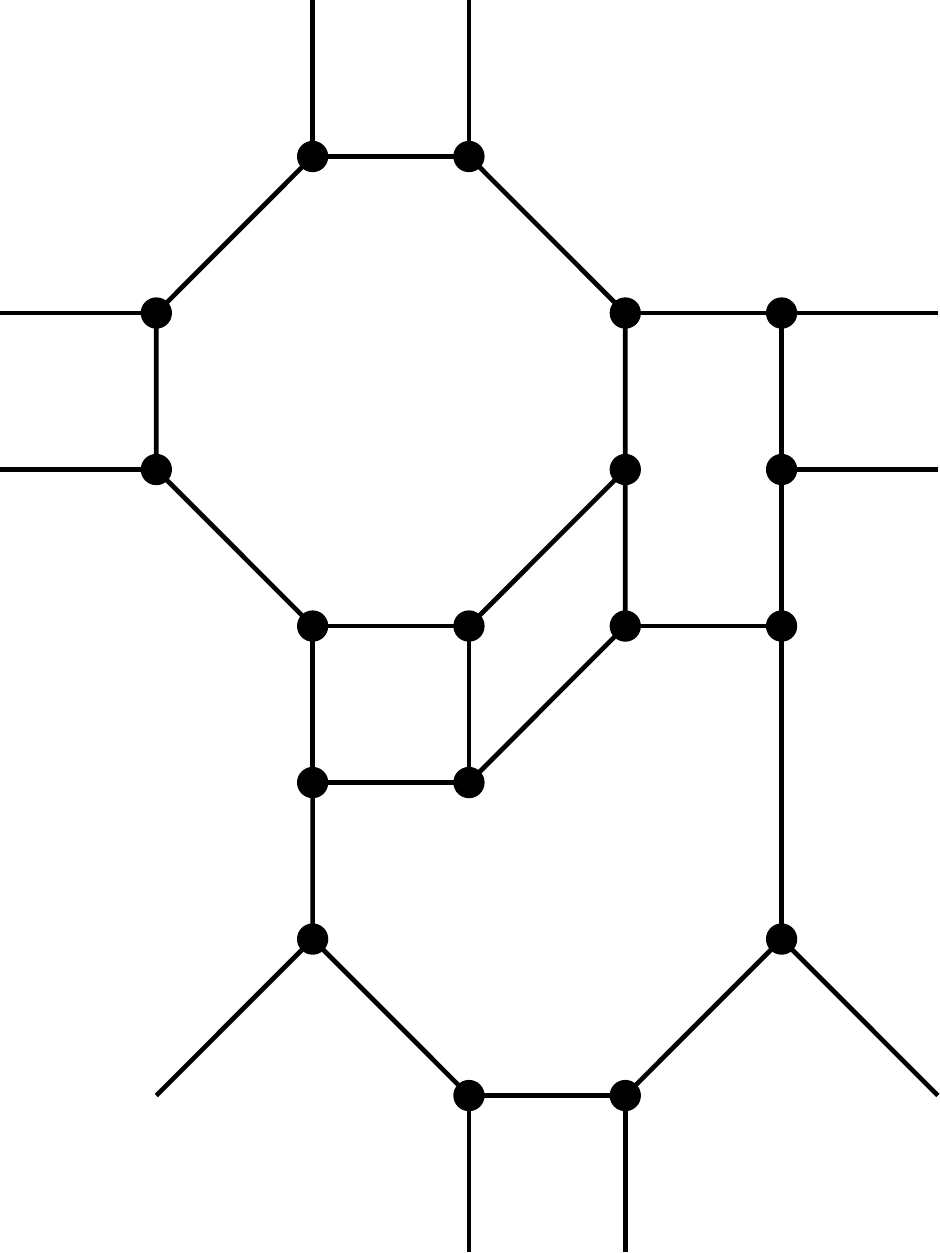}}
\subcaptionbox{Substitution\label{f:14asub}}[0.24\textwidth]{\includegraphics[width=0.22\textwidth]{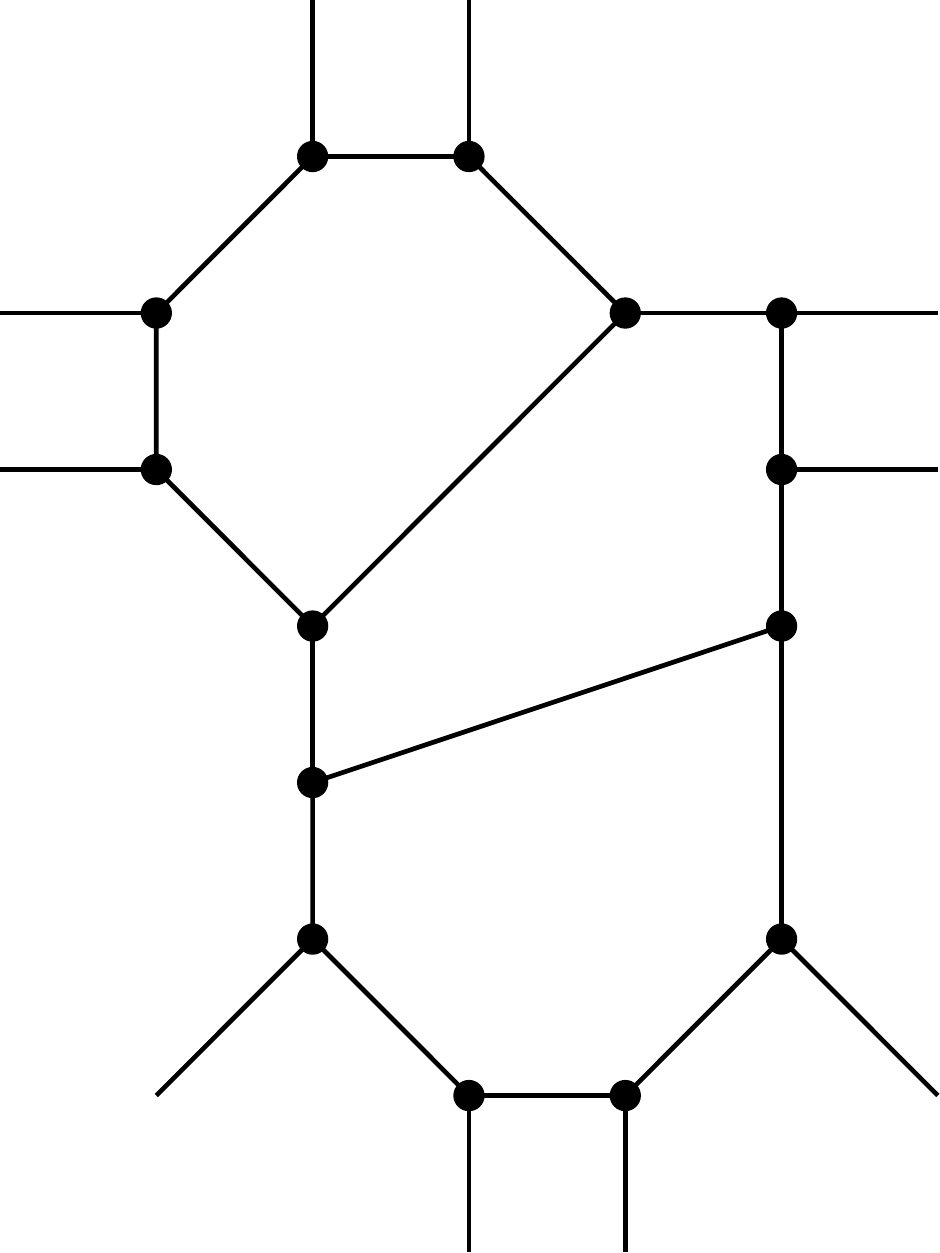}}
}
\end{figure}
\begin{figure}[hp]
\centering
\captionbox{Graph substitution\label{f:13a}}[0.49\textwidth]{
\subcaptionbox{Neighbourhood\label{f:o13a}}[0.24\textwidth]{\includegraphics[width=0.22\textwidth]{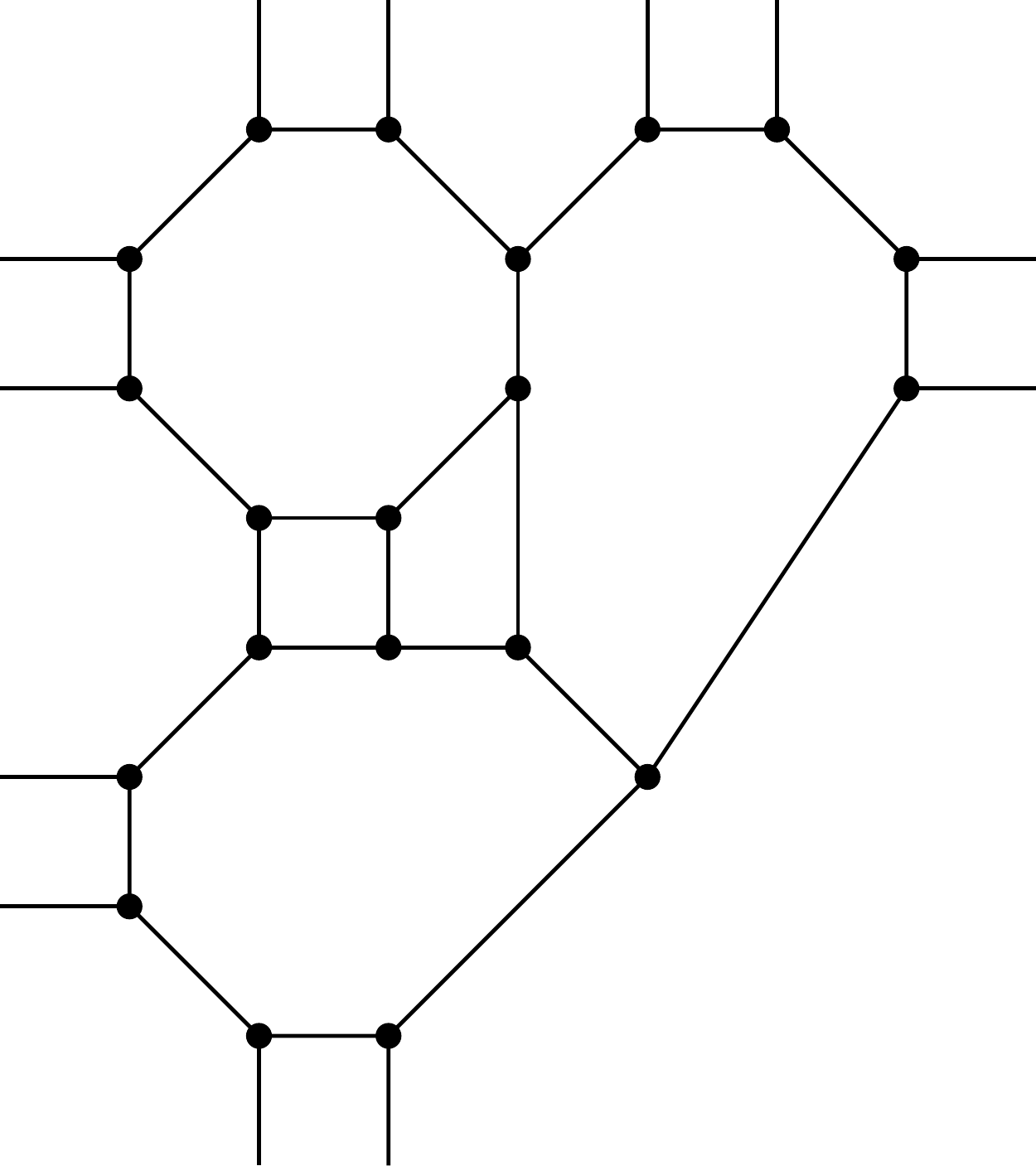}}
\subcaptionbox{Substitution\label{f:13asub}}[0.24\textwidth]{\includegraphics[width=0.22\textwidth]{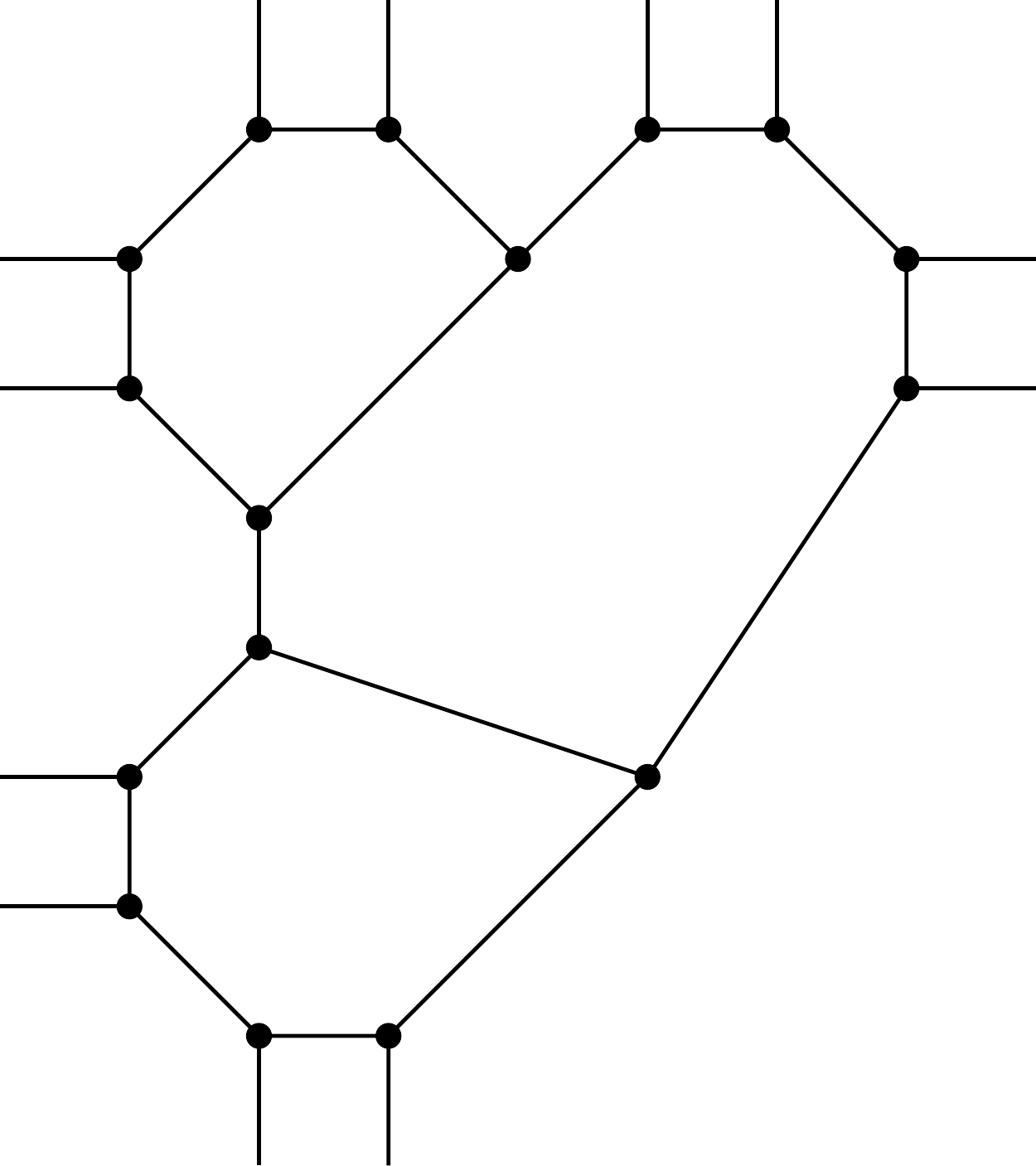}}
}
\captionbox{Graph substitution\label{f:13ca}}[0.49\textwidth]{
\subcaptionbox{Neighbourhood\label{f:o13ca}}[0.24\textwidth]{\includegraphics[width=0.22\textwidth]{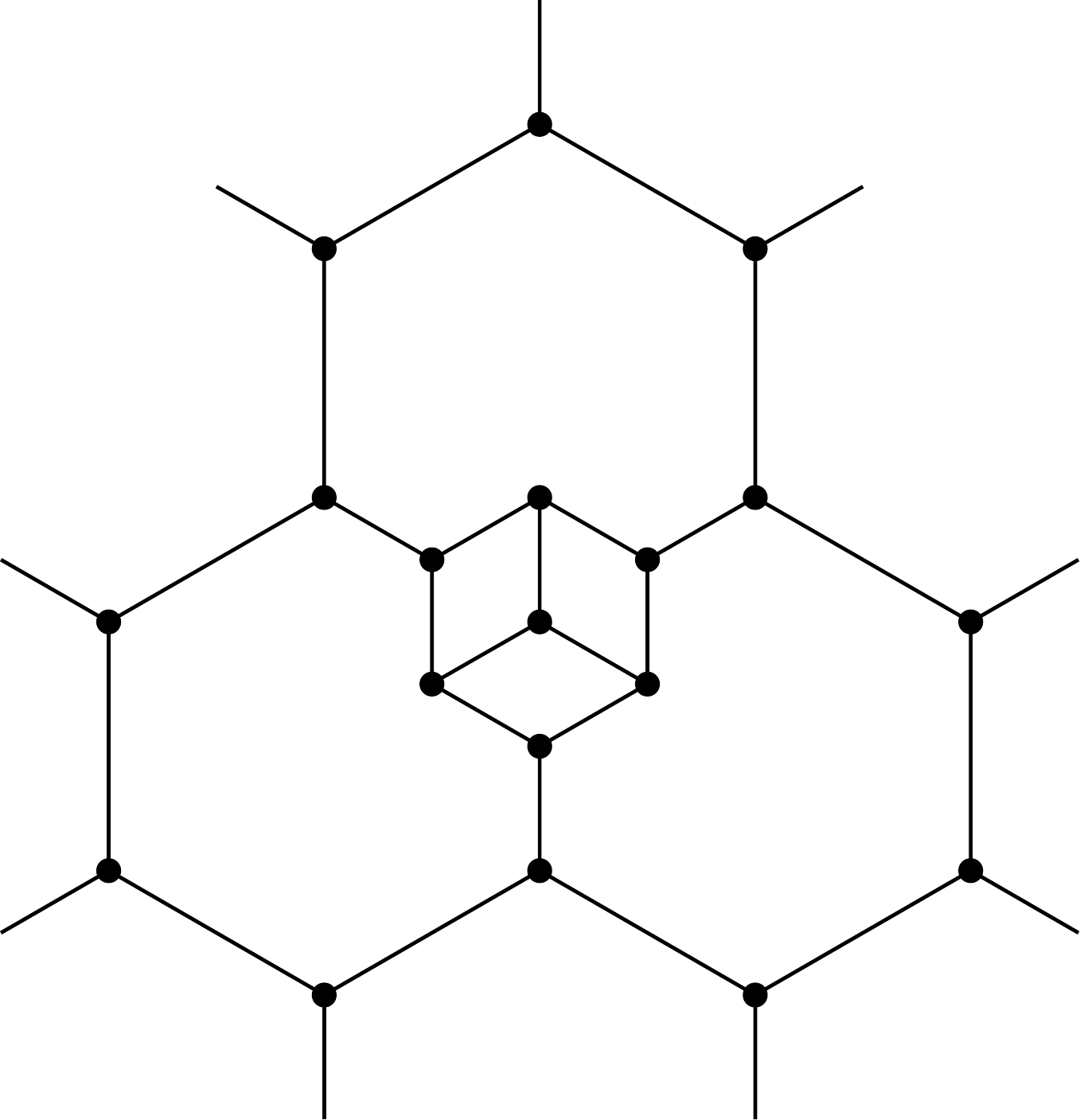}}
\subcaptionbox{Substitution\label{f:13casub}}[0.24\textwidth]{\includegraphics[width=0.22\textwidth]{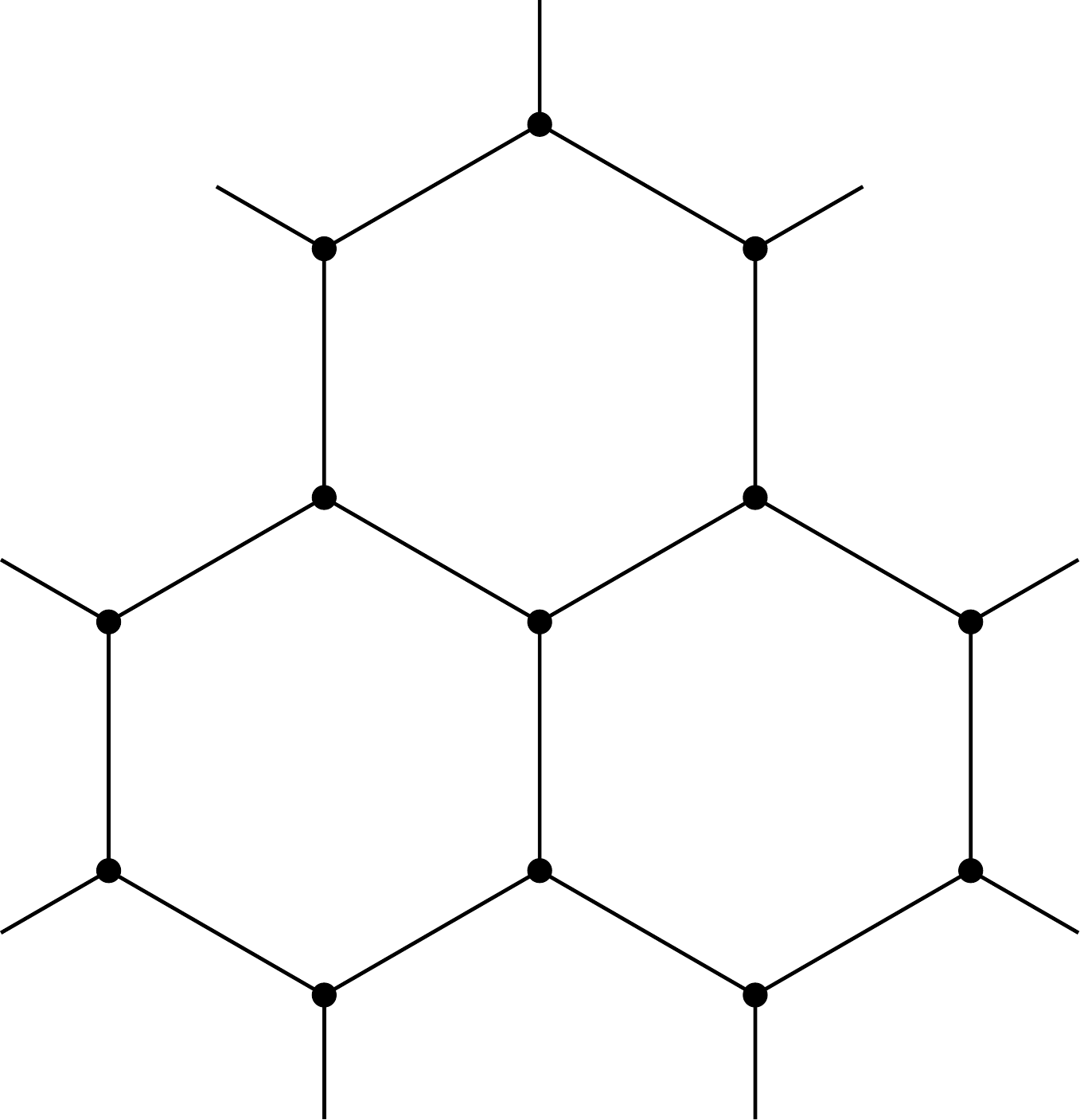}}
}
\end{figure}
\end{document}